%


\documentclass[aos,draft]{imsart}

\usepackage[utf8]{inputenc}
\usepackage[T1]{fontenc}
\usepackage[english]{babel}
\usepackage{xcolor}
\usepackage{amsmath, latexsym, amssymb,amsthm,enumerate}
\usepackage{natbib}
\usepackage{tikz}
\usetikzlibrary{shapes}

\newtheorem{thm}{Theorem}[section]
\newtheorem{prop}[thm]{Proposition}
\newtheorem{cor}[thm]{Corollary}
\newtheorem{defin}[thm]{Definition}
\newtheorem{lem}[thm]{Lemma}
\newtheorem{rem}[thm]{Remark}
\newtheorem{rems}[thm]{Remarks}
\newtheorem*{assumA}{Assumption A}
\newtheorem*{assumB}{Assumption B($k$)}

\startlocaldefs
\renewcommand{\le}{\leqslant}
\renewcommand{\ge}{\geqslant}  
\newcommand{\lec}{\preccurlyeq}
\newcommand{\gec}{\succcurlyeq}

 \newcommand{\norm}[1]{\left\lVert #1 \right\rVert}  
 \newcommand{\interior}[1]{\mathring{#1}}
\newcommand\eps{\varepsilon}

\newcommand\e{\mathrm{e}}
\newcommand\dd{\mathrm{d}}

\newcommand\lb{[\![}
\newcommand\rb{]\!]}

\newcommand\T{\mathcal{T}}
\newcommand\1{\mathbf{1}}
\newcommand\Glm{\mathcal{G}_{\le m}}
\newcommand\Glmo{\mathcal{G}_{\le m_0}}
\newcommand\Gm[1]{\mathcal{G}_{m_{#1}}}
\newcommand\Gj{\mathcal{G}_{j}}
\newcommand\Gf{\mathcal{G}_{<\infty}}
\newcommand\R{\mathbb{R}}
\newcommand\E{\mathbb{E}}
\renewcommand\P{\mathbb{P}}
\newcommand\N{\mathbb{N}}

\newcommand\D[1]{\mathrm{Desc}(#1)}
\newcommand\C[1]{\mathrm{Child}(#1)}

\newcommand\parent[1]{{#1}^{\uparrow}}
\newcommand\fs{\frak{s}}
\newcommand\fe{\frak{e}}
\newcommand\fd{\frak{d}}

\renewcommand\ln{\mathrm{Log}}

\DeclareMathOperator{\Diam}{Diam}
\endlocaldefs

\begin{document}

\begin{frontmatter}

\title{Optimal rates for finite mixture estimation}
\runtitle{Optimal rates for finite mixture estimation}


\author{\fnms{Philippe} \snm{Heinrich}\ead[label=e1]{philippe.heinrich@math.univ-lille1.fr}}
\address{\printead{e1}}
\and
\author{\fnms{Jonas} \snm{Kahn}\corref{JK}\ead[label=e2]{jonas.kahn@math.univ-lille1.fr}}
\address{\printead{e2}}
\affiliation{Universit\'e Lille 1\\ Laboratoire Paul Painlev\'e B\^at. M2\\
  Cit\'e Scientifique \\59655 Villeneuve d'Ascq,  FRANCE}

\runauthor{P. Heinrich and J. Kahn}
\begin{abstract} 
    \ We study the rates of estimation of finite mixing distributions, that is, the parameters of the mixture. We prove that under some regularity and strong
  identifiability conditions, around a given mixing distribution with $m_0$ components, the optimal local minimax rate
  of  estimation  of a mixing distribution with $m$ components is $n^{-1/(4(m-m_0) +
  2)}$.  This corrects a previous paper  by   \citet{Chen} in The Annals of
Statistics.

By contrast, it turns out that there are estimators with a
(non-uniform) pointwise rate of estimation of $n^{-1/2}$ for all mixing distributions with a finite number of components.
\end{abstract}

\begin{keyword}[class=MSC]
\kwd[Primary ]{62G05}
\kwd[; secondary ]{62G20.}
\end{keyword}

\begin{keyword}
\kwd{Local asymptotic normality, convergence of experiments, maximum
  likelihood estimate, Wasserstein metric, mixing distribution,
  mixture model, rate of convergence, strong identifiability, pointwise rate, superefficiency.}
\end{keyword}

\end{frontmatter}

\section{Introduction}
Finite mixture models go back to the work of \citet{Pear} who studied
biometrical ratios on crabs. As a flexible tool to grasp heterogeneity
in data, these models have emerged and successfully been applied in various fields including astronomy, biology, genetics, economy, social sciences and engineering.  A general introduction as well as a brief history can be found in the book of \citet{McLP}. 

\medskip

There are essentially three cases where finite mixtures and their estimation naturally arise. One actively investigated topic is model-based clustering. Here the aim is to divide the data into $k$ clusters and assign (new) data to a cluster. A possible approach is to consider that each data point from a cluster is generated according to a density probability known up to a few parameters, so that the whole data is generated by mixture with $k$ components \citep{McLP,Teh}.

The second, more traditional case, is the statistical description of possibly heterogeneous data where the underlying mixing distribution has no particular meaning. In that case, mixtures are a tool to describe efficiently the ``true'' probability distribution. The goal is then to control the convergence rate of mixture estimators  to this ``true'' probability measure \citep{VdG, MR1873329, MR1810921}.

In the third case, we are interested in the mixing distribution itself, that is the parameters of the mixture. The support points and proportions are the parameters we want to estimate. Typically, they correspond to the phenomenon that is studied, but we only observe data points distributed under the probability distribution corresponding to the mixture. This is the case we are interested in.

Notice that some works try to bridge the gap between the estimation of the mixture, and the estimation of the mixing distribution, usually at least through estimation of the number of components -- the order -- in the finite mixture. In particular, \citet{rousseau2011asymptotic}
have proved that their Bayesian estimator of the mixture tends to empty the extra components, and \citet{MR3015722} have given the minimal penalty on the maximum likelihood estimator that yields strong consistency on the order.

One could expect that a good estimator for the mixture would be a good estimator for the mixing model. However, this is not so clear. The situation is reminiscent of the difference between estimation and identification in model selection, where \citet{yang2005can} has proved that no procedure can be optimal for both. Moreover, rates of convergence can be very different, as illustrated in an infinite-dimensional case by \citet{MR3263130}.

\medskip

When the aim is to estimate the mixture parameters, optimal rates are a key information.  These were unknown \citep[see e.g.][]{Tit} till the work of \citet{Chen}, who established a $n^{-1/4}$ local minimax rate, under reasonable identifiability conditions, for one-dimensional-parameter mixtures.

This result is somewhat surprising, since the rate does not depend on
the number of components. In particular, as a rule of thumb, if a
continuous parameter (here, the rate exponent) is constant for all big
integers, it is the same in the infinite case. However, mixtures with
an infinite number of components can only be estimated at a
non-parametric rate in general. Indeed, deconvolution may be viewed as
a special case of an infinite mixture problem: estimating the mixing
distribution of the shifts of the probability measure of the noise. However, \citet{Fan} had proved that the $L^2$-convergence rate was (a power of) logarithmic in general, and \citet{CCDM} and \citet{DedMic} have generalized this kind of rates to different Wasserstein metric, including the $L^1$-Wasserstein metric. The latter is the one used by \citet{Chen}.

A possible explanation could have been a constant in front of the rate that would explode with the number of components. It turns out, however, that the result by \citet{Chen} is erroneous. 

Let us be more specific. In his Theorem~1, \citet{Chen} proves an $n^{-1/4}$ lower bound on the local minimax rate. Lemma~2 provides a control on a power $\alpha $ of the transportation distance between two mixing distributions by the $L^{\infty}$-distance between the corresponding probability distribution functions. This control is uniform on all pairs of mixing distribution in a ball around a mixing distribution $G_0$.
This uniform control entails (Theorem~2) an upper bound $n^{-1/(2 \alpha )}$ on the local minimax rate of estimation.

The exponent $\alpha$ in Lemma~2 was equal to $2$, so that the lower
and upper bounds coincide. However, Lemma~2 and its proof contain an
error: forgetting that distinct components can converge to the same
one.  Our article aims at giving correct statements and proofs for this Lemma~2 and its consequences.

The main part consists in finding the correct $\alpha$; that is Theorem~\ref{main}. Theorem~\ref{lower_bound} gives the matching lower bound, so that the local minimax rate is established.

Interestingly, another way to correct Lemma~2 is by restricting the pairs of mixtures that are compared. Namely, instead of comparing all pairs of mixtures in a ball around $G_0$, we allow only comparison of a mixture in the ball with the ball center $G_0$. Then $\alpha = 2$ is valid. We give the corresponding statement in Theorem~\ref{weak_Chen}. Translated to Theorem~2 of \citeauthor{Chen}, this corresponds  to dropping uniformity. That is, for any fixed $G$, the same estimator will converge at rate $n^{-1/4}$, but the constant depends on $G$: this is a bound on pointwise rate everywhere, instead of a bound on local minimax rate.

Thus the optimal local minimax rate and the optimal pointwise rate of estimation everywhere do not coincide. This discrepancy is not very usual in statistics, and often a source of confusion. To make things a little clearer, we also establish the optimal pointwise rate in Theorem~\ref{choose_and_estimate}. Since Theorem~1 of \citet{Chen} is a bound on local minimax rate, the pointwise rate might be better. And indeed,  the optimal pointwise rate everywhere is $n^{-1/2}$.


\medskip

The paper by \citet{Chen} has been widely cited and used. 
Apart from applied papers citing it that may have relied on the theoretical guarantees  \citep[see e.g.][]{kuhn2014spatial, liu2014unrestricted}, there are essentially two ways it could play a role. Firstly, when it is used as part of a proof, secondly when it is used as a benchmark.

The first case covers papers that generalize \citeauthor{Chen}'s result in other settings, and re-use its theorems and proofs. For example, \citet{Ish} propose a Bayesian estimator that achieves the $n^{-1/4}$ frequentist rate, and use  \citet[Lemma~2]{Chen} in their analysis. More recently, \citet{Ngu} generalizes those results to mixtures with an abstract parameter space and indefinite number of components. However his Theorem~1 generalizes  \citet[Lemma~2]{Chen} while transposing the proof with the mistake. The main results of both these articles hold however: they do not need the full strength of  \citet[Lemma~2]{Chen}, but merely the weaker version Theorem \ref{weak_Chen}.

These two papers also use \citeauthor{Chen}'s \citeyearpar{Chen} article as a benchmark. However, the optimal pointwise rate everywhere would probably be a better reference point in their case, as in many others. In particular, it seems likely that a Bayesian estimator could converge pointwise at speed $n^{-1/2}$ everywhere. We have not checked whether the proof by \citet{Ish} can be improved, or if another prior is necessary. 

This use as a benchmark is very usual, as expected for this kind of optimality result \citep[see e.g.][]{Zhu,Zhu2}. Let us point in particular to a result by \citet{Ryan}. He achieves almost $n^{-1/2}$ rate for the predictive recursion algorithm, and tries to explain the discrepancy with \citet{Chen} by the fact that the parameters are constrained to live in a finite space for his algorithm. In fact, since his rate is pointwise, it fits with the continuous case. 


\bigskip

In Section~\ref{not-res}, we give the notations and define and discuss the regularity 
assumptions we use. In Section~\ref{sec:main}, we state and discuss the main theorems, giving the optimal local minimax rate and pointwise rate everywhere. We try to give some intuition. We also dwell on the interpretation and practical consequences of having different rates, and conclude the section with open questions. In Section~\ref{key}, we give and explain the meaning of the key intermediate results and prove the main theorems from here. In Section~\ref{proofs}, we prove those key intermediate results. In particular, in Section~\ref{tree}, we introduce the most original tool of our proofs: the coarse-graining tree that allows to patch the mistake in the article by \citet{Chen}.

Some auxiliary and technical results are  detailed in appendices grouped in a supplemental
part \citep{Supp}. 

\section{Notations and regularity conditions}
\label{not-res}

\subsection{General notations}
\label{gen_not}

Throughout the paper, the family $\left\{f(x,\theta)\right\}_{\theta \in \Theta }$ will consist of probability densities $x\mapsto f(x,\theta)$ on $\R$ with respect to some $\sigma$-finite measure $\lambda$. The
parameter set $\Theta $ is always assumed to be a compact subset of
$\R$ with non-empty interior. We write $\Diam \Theta$ for its diameter. Given an $m$-mixing (or $m$-points support) distribution $G$ on $\Theta$, a finite mixture model with $m$
components is defined by 
\begin{equation}
  \label{fxG}
  f(x,G)=\int_{\Theta}f(x,\theta)\dd G(\theta).
\end{equation}
The set of such $m$-mixing distributions $G$ is denoted by $\Gm{}$ and $\Glm$
will be the union of $\Gj$ for $j\in \lb 1,m\rb$. Similarly, the set of finite mixing distributions is denoted by $\Gf$. For two mixing
distributions $G_1$ and $G_2$, note that by linearity
$f(x,G_1-G_2)=f(x,G_1)-f(x,G_2)$. This will be used to shorten
expressions. 

In what follows $\|\cdot\|_\infty$ is the supremum norm with respect to $x$ and $\|\cdot\|$ is any  norm in finite dimension. Throughout the paper, the variable $x$ plays no role, and we often write $f(\cdot, \theta )$. The $p$-th derivative $f^{(p)}(x, \theta)$ is always taken with respect to the variable $\theta $.

We write $F_n$ for the empirical distribution, that is, if $X_1, \dots, X_n$ are independent with distribution $F(\cdot, G)$, then $F_n(t) = \frac1n\sum_{i=1}^n \1 _{ \{X_i\le t\}}$.

\medskip

As usual, the ($L^1$)-transportation distance, or Wasserstein metric, is used to compare two mixing distributions $G_1$ and $G_2$. It completely bypasses identifiability issues that would arise with the square error on parameters.
The definition is:
\begin{equation}
\label{defW}
  W(G_1,G_2)=\inf_{\Pi}\int_{\Theta\times\Theta}|\theta-\theta'|\dd \Pi(\theta,\theta'),
\end{equation}
where the infimum is taken over probability measures $\Pi$ on $\Theta\times\Theta$ with marginals $G_1$ and $G_2$. By the Kantorovich-Rubinstein dual representation
\citep[e.g.][section 11.8]{Dudley}, $ W(G_1,G_2)$ can be viewed as  a supremum:
\begin{equation}
  \label{dualW}
   W(G_1,G_2)=\sup_{|f|_{\mathrm{Lip}}\le 1}\int_{\Theta}f(\theta)\dd (G_1-G_2)(\theta),
\end{equation}
where $|f|_{\mathrm{Lip}}$ stands for the Lipschitz seminorm of
$f$. Endowed with the metric $W$, the space $\Glm$ is compact. It is sometimes convenient to use the notation $W(G_1-G_2)$
instead of $W(G_1,G_2)$.


We also introduce the  Wasserstein $\eps$-ball of a mixing distribution $G_0$: 
\[\mathcal{W}_{G_0}(\eps)=\{G\in\Gf : W(G, G_0)
< \eps\}.\]

\medskip

In the rest of the paper, we will need to compare sequences, say $(a_n)$ and $(b_n)$.
The notation $a_n\lec b_n$ (or even $a\lec b$ if $n$ is kept
implicit) means that there is a  positive constant
$C$ such that $a_n\le C b_n$; in other words,
$a_n=O(b_n)$. We will also use  $a_n\asymp b_n$  for $b_n \lec a_n\lec
b_n$.  If we need to stress the dependence of the constants $C$
on other parameters, say $C = C(u,v,\theta)$, we will write 
$a_n \underset{u,v,\theta}{\lec} b_n$ or $a_n \underset{u,v,\theta}{\asymp} b_n$. 

Below $\xrightarrow[]{d}$ (resp. $\xrightarrow[]{P}$) stands for convergence in distribution (resp. in probability). We write $\lb i,j \rb$ for the set of integers between $i$ and $j$.

\subsection{Regularity: $(p,q)$-smoothness} 
It is notationally natural to set 
\begin{equation}
  \label{Ff}
 F(x,\theta )=\int_{-\infty}^x f(y,\theta )\dd\lambda(y), 
\end{equation}
and to denote by $\mathbb{E}_{\theta}$ the expectation
w.r.t. $f(x,\theta)\dd\lambda(x)$. If we identify $\theta$ with the
Dirac measure $\delta_\theta$, the notations extend naturally to mixing
distributions $G$ by linearity. 

Recall that 
 derivatives $f^{(p)}$ are taken
w.r.t. the variable $\theta$. 

\begin{defin}
    \label{Eia}
Set for $p\in \N$ and $q>0$,
    \begin{align}
        \label{Epq}
        E_{p,q}\left(\theta, \theta', \theta''\right) & = \mathbb{E}_{\theta}\left|\frac{ f^{(p)}(\cdot, \theta')}{f(\cdot,\theta'')} \right|^q. 
    \end{align}
    We say that $\left\{ f(\cdot,\theta),\theta \in \Theta \right\}$ is \emph{$(p,q )$-smooth} if
    \begin{enumerate}
    \item $E_{p,q}$ is a well-defined $[0,\infty]$-valued continuous function on $\Theta^3$,
  \item \label{proche} There exists
    $\varepsilon > 0$ such that 
\begin{align*}
        |\theta' - \theta''| < \varepsilon & \implies \forall \theta\in\Theta,\quad E_{p,q }(\theta, \theta', \theta'') < \infty.
    \end{align*}
    \end{enumerate}   
    \end{defin}

    These smoothness conditions are easy to check in practice, and general enough. For example, all exponential families satisfy them, as shown in Section~\ref{exp_smooth} in the supplemental part \citep{Supp}.

    They will be useful for proving local asymptotic normality \citep{LeCam} of relevant families.

\subsection{Regularity: $k$-strong identifiability}

\citet{Chen} introduced a notion of strong identifiability. We will need a slightly more general version.
\begin{defin}
    \label{identifiability}
The family $\left\{ F(\cdot,\theta ), \theta \in \Theta  \right\} $ of
    distribution functions is \emph{$k$-strongly identifiable} if for any finite set of say $m$ distinct $\theta_j\in\Theta$, then the equality
    \begin{align*}
        \norm{ \sum_{p=0}^k \sum_{j=1}^m \alpha _{p,j} F^{(p)}(\cdot, \theta _j) }_\infty = 0
    \end{align*}
    implies $\alpha _{p,j} = 0$ for all $p$ and $j$.
\end{defin}

\citeauthor{Chen}'s strong identifiability corresponds to $2$-strong identifiability. Let us exemplify why this notion is useful. Consider a sequence of mixing densities $G_n = \frac12 (\delta_{n^{-1}} + \delta_{-n^{-1}})$. Then, if we can develop around $\theta = 0$, we see that $F(\cdot, G_n) = F(\cdot, 0) + n^{-2} F^{(2)}(\cdot, 0) + o(n^{-2})  $. Then $2$-strong identifiability ensures that $\left\lVert F(\cdot, 0) - F(\cdot, G_n) \right\rVert _{\infty} $ is of order $n^{-2}$, as shown in Proposition~\ref{infimum} below, whereas simple identifiability would say nothing. We will need $k$-strong identifiability when more moments in $\theta $ w.r.t. the two mixing distributions are the same.

\begin{prop}
    \label{infimum}
Fix $m\ge 1$. Let $\left\{ F(\cdot,\theta ), \theta \in \Theta  \right\} $ be
$k$-strongly identifiable. For $\eps>0$, the $\theta _i$ are $\varepsilon $-separated if they belong to
\[\mathcal{D}_\eps=\left\{(\theta _i)_{1\le i\le j}: \forall i\ne
  i',\quad |\theta_i-\theta_{i'}|\ge \eps\right\}.\]
If $F^{(m)}(x,\theta)$ is continuous in $\theta$, then
\begin{align}
        \label{alpha_bound_sep}
\forall   (\theta _i)_{1\le i\le j}\in  \mathcal{D}_\eps,\quad   \norm{ \sum_{p=0}^k \sum_{j=1}^m \alpha _{p,j} F^{(p)}(\cdot, \theta _j) }_\infty \underset{\eps}{\gec} \norm{\alpha }.
    \end{align}   
\end{prop}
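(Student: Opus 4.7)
The plan is to prove the inequality by a compactness and contradiction argument, using the homogeneity in $\alpha$ to reduce to a unit-sphere constraint and then exploiting the $k$-strong identifiability at the limit.

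First I would normalize: since the left-hand side is homogeneous of degree one in $\alpha$, it suffices to show that the infimum
\[
c_\eps := \inf\Bigl\{\,\norm{\textstyle\sum_{p=0}^{k}\sum_{j=1}^{m}\alpha_{p,j}F^{(p)}(\cdot,\theta_j)}_\infty : \norm{\alpha}=1,\ (\theta_j)_{1\le j\le m}\in\mathcal{D}_\eps\cap\Theta^m\,\Bigr\}
\]
is strictly positive. The domain is compact: the unit sphere in the finite-dimensional space of $\alpha$'s is compact, $\Theta^m$ is compact since $\Theta$ is, and the $\eps$-separation condition defines a closed (hence compact) subset of $\Theta^m$.

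Suppose for contradiction that $c_\eps=0$. Then there exists a sequence $(\alpha^{(n)},\theta^{(n)}_1,\dots,\theta^{(n)}_m)$ in this compact set along which $\norm{g_n}_\infty\to 0$, where $g_n(x):=\sum_{p,j}\alpha^{(n)}_{p,j}F^{(p)}(x,\theta^{(n)}_j)$. Extract a subsequence converging to some $(\alpha^\star,\theta^\star_1,\dots,\theta^\star_m)$ with $\norm{\alpha^\star}=1$ and (by closedness of $\eps$-separation under limits) pairwise distinct $\theta^\star_j$. Using continuity of each $F^{(p)}(x,\cdot)$ in $\theta$ for every fixed $x$, the sequence $g_n(x)$ converges pointwise to $g(x):=\sum_{p,j}\alpha^\star_{p,j}F^{(p)}(x,\theta^\star_j)$. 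But $|g(x)|\le\liminf_n|g_n(x)|\le\liminf_n\norm{g_n}_\infty=0$, so $g\equiv 0$. Applying $k$-strong identifiability to the distinct $\theta^\star_1,\dots,\theta^\star_m$ yields $\alpha^\star=0$, contradicting $\norm{\alpha^\star}=1$. Hence $c_\eps>0$, and by homogeneity \eqref{alpha_bound_sep} follows with constant $c_\eps$.

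The main subtlety is the passage to the limit in $\norm{\cdot}_\infty$: the map $(\alpha,\theta)\mapsto\norm{\sum\alpha_{p,j}F^{(p)}(\cdot,\theta_j)}_\infty$ need not be continuous under only pointwise continuity of $F^{(p)}(x,\cdot)$, but lower semi-continuity suffices here, and this is exactly what the pointwise argument combined with the trivial bound $|g(x)|\le\norm{g_n}_\infty$ delivers. The secondary point is that the argument uses continuity of $F^{(p)}(x,\cdot)$ for all $p\in\lb 0,k\rb$, not just the top order: this is either part of the implicit standing hypotheses (the statement's "$F^{(m)}$ continuous" appears to be a typo for $F^{(k)}$) or a consequence of integrating the top derivative with dominated convergence to propagate continuity down through lower orders.
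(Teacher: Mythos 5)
Your proposal is correct and follows essentially the same route as the paper: after normalizing $\alpha$ to the unit sphere, the paper observes that $(\alpha,\vartheta)\mapsto\norm{\sum_{p,j}\alpha_{p,j}F^{(p)}(\cdot,\theta_j)}_\infty$ is lower semi-continuous on the compact set $\{\norm{\alpha}=1\}\times\mathcal{D}_\eps$, hence attains a minimum, which is nonzero by $k$-strong identifiability. Your sequential argument (pointwise convergence of $g_n$ plus $|g(x)|\le\liminf_n\norm{g_n}_\infty$) is exactly the unpacking of that lower semi-continuity claim, and your remark that the continuity hypothesis should read $F^{(k)}$ rather than $F^{(m)}$ is a fair reading of what the proof actually uses.
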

\begin{proof}
       Set $\alpha=(\alpha _{p,j})$
 and $\vartheta=(\theta _i)$. The $[0,\infty]$- valued function $(\alpha,\vartheta)\mapsto
 \norm{ \sum_{p=0}^k \sum_{j=1}^m \alpha _{p,j} F^{(p)}(\cdot, \theta _j)
 }_\infty$ is lower semi-continuous on the compact set
 $\{\alpha :\|\alpha\|=1\}\times  \mathcal{D}_\eps$ so that it admits a minimum. By $k$-strong identifiability, it is nonzero.
\end{proof}
We expect the strong identifiability to be rather generic, and hence
the statements of this paper often meaningful. In particular, \citet[Theorem~3]{Chen} has proved that 
location and scale families with smooth densities are $2$-strongly identifiable. The theorem and the proof straightforwardly generalise to our case. We merely state the result.
\begin{thm}
    \label{thm_identifiability}
    Let $k\ge 1$. Let $f$ be a probability density with respect to to the Lebesgue measure on $\mathbb{R} $. Assume that $f$ is $k-1$ times differentiable with 
\[\lim_{x\to\pm \infty}f^{(p)}(x)=0\text{ for } p\in \lb 0,k-1\rb.\]
Consider $f(x,\theta)= f(x-\theta)$, with $\theta\in\Theta\subset\R$. Then the corresponding distributions family $\{F(\cdot,\theta),\theta\in\Theta\}$ is $k$-strongly identifiable.  If $\Theta\subset (0,\infty)$, the result stays true with $f(x,\theta)=\frac{1}{\theta}f\left(\frac{x}{\theta}\right)$.
\end{thm}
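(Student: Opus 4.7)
The plan is to reduce $k$-strong identifiability to a linear-independence statement for exponential polynomials via the Fourier transform; this amounts to replaying Chen's $k=2$ argument uniformly in $k$. For the location family, one has $F(x,\theta) = \tilde F(x-\theta)$ with $\tilde F$ the cdf of $f$, so that $F^{(p)}(x,\theta) = (-1)^p\,\tilde F^{(p)}(x-\theta)$ for $0 \le p \le k$, equal to $\tilde F(x-\theta)$ for $p = 0$ and to $(-1)^p f^{(p-1)}(x-\theta)$ for $p \ge 1$, which is well-defined by the $(k-1)$-fold differentiability of $f$. Assuming the identity $\sum_{p,j}\alpha_{p,j} F^{(p)}(\cdot,\theta_j)\equiv 0$, I would differentiate once in $x$ (in the distributional sense, to accommodate the $p=k$ term) to discard $\tilde F$ and obtain
\[
\sum_j \alpha_{0,j}\, f(\cdot-\theta_j) + \sum_{p=1}^k\sum_j (-1)^p \alpha_{p,j}\, f^{(p)}(\cdot-\theta_j) = 0.
\]

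Applying the Fourier transform and writing $\phi := \hat f$, this becomes
\[
\phi(\xi)\sum_{j=1}^m e^{-i\xi\theta_j}\, P_j(i\xi) = 0,
\qquad
P_j(z) := \alpha_{0,j} + \sum_{p=1}^k (-1)^p \alpha_{p,j}\, z^p.
\]
Since $\phi$ is continuous with $\phi(0) = 1$, it is nonvanishing on some neighborhood $I$ of $0$, so $\sum_j e^{-i\xi\theta_j} P_j(i\xi) = 0$ holds on $I$. This sum is entire in $\xi$, so the identity extends to all of $\mathbb{C}$. The classical linear independence of the exponential polynomials $\{\xi^p e^{-i\xi\theta_j}\}_{p,j}$ with distinct $\theta_j$ (a Wronskian argument) then forces $P_j \equiv 0$ for each $j$, i.e.\ $\alpha_{p,j} = 0$ for all $p,j$, establishing $k$-strong identifiability in the location case.

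For the scale family with $\Theta \subset (0,\infty)$, I would reduce to the location case by the change of variable $y = \log x$, $\sigma = \log\theta$: the log-transformed family is a location family on $\log x$ with density $g(y-\sigma)$, where $g(u) = e^u f(e^u)$. Iterating $\partial_\theta = \theta^{-1}\partial_\sigma$ yields $\partial_\theta^p = \theta^{-p}\sum_{q=1}^p s(p,q)\,\partial_\sigma^q$ (with Stirling numbers of the first kind), a triangular operator with unit diagonal, so the identity $\sum_{p,j}\alpha_{p,j} F^{(p)}(\cdot,\theta_j)\equiv 0$ rewrites as one of the same shape on the location side for $g$ under an invertible linear change of coefficients, and the location-family result then closes the argument.

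The main obstacle I anticipate lies in this scale reduction: ensuring $g^{(p)}(u)\to 0$ as $u\to+\infty$ amounts to $x^{p+1} f^{(p)}(x)\to 0$, which is strictly stronger than the stated hypothesis $f^{(p)}(x)\to 0$. To avoid imposing additional tail decay on $f$, one can rerun the Fourier step directly in the scale variable using $\widehat{\theta^{-1} f(\cdot/\theta)}(\xi) = \phi(\theta\xi)$ and exploit analyticity of $\phi$ near the origin, or equivalently pass to the Mellin transform, where the scale action is diagonalized and exponential-polynomial independence is replaced by independence of $\{\theta_j^s(\log\theta_j)^p\}$ across distinct $\theta_j \in (0,\infty)$.
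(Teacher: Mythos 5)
Your location-family argument is correct in substance, and it is essentially the characteristic-function proof behind Chen's Theorem~3, which is all the paper itself offers (the paper states the result and asserts that Chen's proof generalises; it contains no proof of its own). Differentiating once in $x$ to remove the cdf block, factoring out $\phi=\hat f$, using that $\phi$ is continuous with $\phi(0)=1$ hence nonvanishing near $0$, extending the exponential-polynomial identity by analyticity and concluding by linear independence of $\{\xi^p e^{-i\xi\theta_j}\}$ is the right chain. Two small points deserve a line each: the decay hypotheses $f^{(p)}\to 0$ are what make the intermediate derivatives bounded, hence tempered, so the term-by-term Fourier transform is legitimate; and at top order $f^{(k-1)}$ is only assumed to exist pointwise, so identifying its classical derivative structure with the distributional one (local integrability, absolute continuity of $f^{(k-2)}$) should be stated rather than assumed.

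The genuine gap is the scale half of the statement, which your proposal does not prove. You correctly diagnose that the reduction $y=\log x$, $\sigma=\log\theta$ needs $x^{p+1}f^{(p)}(x)\to 0$, which the hypotheses do not give (and without it $g(u)=e^u f(e^u)$ need not even have tempered derivatives, so the location machinery cannot be run on $g$). But the repairs you sketch fail as stated. ``Analyticity of $\phi$ near the origin'' is false in general: the Cauchy density satisfies every hypothesis of the theorem for all $k$, yet $\hat f(\xi)=e^{-|\xi|}$ is not even differentiable at $0$. Writing the identity on the Fourier side through $\phi(\theta\xi)$ and differentiating in $\theta$ requires $\phi\in C^{k}$, i.e.\ finite moments of $f$ up to order $k$, again not assumed; and if one instead transforms after differentiating in $\theta$, the terms $x^{q}f^{(q-1)}(x/\theta_j)$ have Fourier transforms that are distributional $\xi$-derivatives of $\xi^{q-1}\phi(\theta_j\xi)$-type objects, not functions, so the device ``restrict to an interval where $\phi\neq 0$ and continue analytically'' is no longer available. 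The Mellin route is only named, not carried out: the density lives on all of $\R$, so the two half-lines must be treated separately, one must check that the transforms of $\tilde F(x/\theta)$ and of $x^{q}f^{(q-1)}(x/\theta)$ exist on a common line, and one needs a nonvanishing or uniqueness statement playing the role of $\phi(0)=1$ before the independence of $\{\theta_j^{s}(\log\theta_j)^{p}\}$ can be invoked. As it stands, the scale case --- half of the theorem --- remains unestablished in your write-up.
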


See also the article by \citet{Hol} for more general conditions, that also generalize well to $k$-strong identifiability.

\subsection{Assumptions}

For proving lower bounds on rates, we will assume:
\begin{assumA}
\label{as:smooth}
  The  family of densities  $\{f(\cdot,\theta),\theta\in \Theta\}$
    satisfies, with $G_0\in \Gm0$,
    \begin{itemize}
    \item $(p,q)$-smoothness for all $(p,q)\in\lb 1,2(m-m_0)+2\rb\times\lb
    1,4\rb$,
  \item There is some support point $\theta_0$ of $G_0$ such that
     $\theta_0\in \interior{\Theta}$ and
\[ \int|f^{(2(m-m_0)+1)}(\cdot,\theta_0)|\dd\lambda >0.\] 
    \end{itemize}   
\end{assumA}
These conditions allow to prove local asymptotic normality \citep{LeCam} for relevant families. This will give some insight on the reason why the lower bound on the rate holds, and on how the mixtures behave when we change the parameters in the least sensitive direction. The condition on the support point guarantees identifiability locally for the families, and we need more derivatives than usual, since there will be cancellations in the first terms.

For proving upper bounds on rates we will assume:
\begin{assumB}
 The family of densities $\{f(\cdot,\theta),\theta\in\Theta\}$ satisfies, with $F(x,\theta)=\int_{-\infty}^xf(\cdot,\theta)\dd\lambda$,
\begin{itemize}
\item For all $x$, $F(x,\theta)$ is $k$-differentiable w.r.t. $\theta$,
\item $\{F(\cdot,\theta),\theta\in\Theta\}$ is $k$-strongly identifiable,
\item There is a uniform continuity modulus $\omega(\cdot)$ such that 
\[\sup_x\big|F^{(k)}(x,\theta_2)-F^{(k)}(x,\theta_1)\big|\le \omega(\theta_2-\theta_1)\]
with $\lim_{h\to 0}\omega(h)=0$.
\end{itemize}
\end{assumB}
Notice that the latter condition is  satisfied  if $F^{(k+1)}$ exists and is bounded.

These derivability conditions should be compared with the usual parametric case, where differentiability in quadratic mean, or twice differentiability in $\theta $ for a less technical condition, is enough to get $n^{-1/2}$ local minimax rate. We will need $B(2m)$ to prove a global minimax rate of $n^{-1/(4m - 2)}$, and $B(1)$ for a pointwise rate of $n^{-1/2}$ everywhere.

\section{Main results}
\label{sec:main}

We now have the tools to state the main results.

Keeping in mind the following viewpoint will help getting intuition on the results. The data we have access to is the empirical distribution $F_n$, which gets closer to the true mixture $F(\cdot, G)$ at rate $n^{-1/2}$. Hence $G_1$ and $G_2$ can be told apart if $\left\lVert F(\cdot, G_1) - F(\cdot, G_2) \right\rVert_{\infty}$ is at least of order $n^{-1/2}$.

If we get a control on powers of the transportation distance $W(G_1, G_2)^d$ by $\left\lVert F(\cdot, G_1) - F(\cdot, G_2) \right\rVert_{\infty}$, we then get $n^{-1/(2d)}$ rates.  For upper bounds, Lemma~\ref{lemDKW} makes this rigorous.

For lower bounds, general estimators could hope to do better, say by noticing that some data points are not in the support of some $G_1$. However this will not be the case under sufficient smoothness conditions.

In this setting, the minimum distance estimator discussed by \citet{Deely&Kruse} and \citeauthor{Chen} is natural, and we often use it later on. 
\begin{defin}
    \label{min_dist}
    The \emph{minimum distance estimator} 
 $\widehat{G}_n \in \Glm $ is any mixing distribution whose corresponding mixture minimizes the $L^{\infty}$-distance to the empirical distribution, that is:
\begin{equation}
  \label{gn}
\|F(\cdot,\widehat{G}_n)-F_n\|_\infty= \inf_{G\in \Glm}\|F(\cdot,G)-F_n\|_\infty.  
\end{equation}
Note that the infimum is attained since $G\mapsto
\|F(\cdot,G)-F_n\|_\infty$ is lower semi-continuous on the compact
metric space $(\Glm,W)$.
\end{defin}

\subsection{Local asymptotic minimax rate}
\label{sub:minimax}

When the number $m$ of components in a mixture is exactly known and $f(\cdot, \theta )$ is smooth in $\theta $, we are in a simple smooth parametric case, with $2m-1$ parameters. Hence the optimal local minimax rate of estimation is $n^{-1/2}$ in mean square error, with a constant given by the Cramér-Rao bound \citep{hajek1972local}. This translates to the same rate in transportation distance.

In particular the minimum distance estimator introduced above attains the $n^{-1/2}$ rate (Theorems~\ref{equal_compo} and~\ref{choose_and_estimate}), not necessarily with the optimal constant.

The difficulty with mixtures stems from what happens when the number of components is not known: is there only one component here, or two very close ones? If there are two, what are their weights and how far apart are they? 

We can build families of mixtures that are very hard to tell apart, because their mixing distributions have the same first moments. Indeed, suppose that all the support points of the mixture are of the form $\theta _0 + h_j$ with $h_j$ small. Then a Taylor expansion in $\theta $ of the mixture $F(\cdot, G)$ yields:
\begin{align*}
    F(\cdot, G) & = \sum_{p=0}^k \sum_j (\pi_j h_j^p) \frac{F^{(p)}(\cdot, \theta_0)}{p!} + o(\pi_j h_j^{k}). \\
\end{align*}
So that, according to our heuristics on the empirical distribution, if $G_1$ and $G_2$ have the same $k$ first moments, we cannot tell them apart if $h_j^k \ll n^{-1/2}$, that is if $h_j \ll n^{-1/(2k)}$.

As an example, let us consider two-component mixtures around $G_0 = \delta _0$.  Then $G_{1,n} = \frac{1}{2} \left( \delta
      _{- 2 n^{-1/6}} + \delta _{2n^{-1/6}} \right) $ and $G_{2,n} =
    \frac{4}{5} \delta _{-n^{-1/6}} + \frac{1}{5} \delta _{4
      n^{-1/6}}$ both have $0$ as first moment, and $4n^{-1/3}$ as
      second moment. The third moments are respectively zero for $G_{1,n}$ and $12n^{-1/2}$ for $G_{2,n}$. According to this heuristics, no test can reliably tell $G_{1,n}$ from $G_{2,n}$ with an $n$-sample. On the other hand, we clearly have $W(G_{1,n} ,G_{2,n})= n^{-1/6} $ for all $n$. So that the minimax  rate for $2$-mixtures cannot be better than $n^{-1/6}$.

This moment matching argument can be made rigorous and precise with two tools. One is  \citeauthor{Lindsay}'s \citeyearpar{Lindsay} Hankel trick (Theorem~\ref{Lindsay}), also used  by \cite{Gass} to estimate the order of a mixture. The other is local asymptotic normality (Definition~\ref{defLAN}), developed by \citet{LeCam}.  We use them to build a one-parameter locally asymptotically normal family with scale factor $n^{1/ (4(m - m_0) + 2)}$ in Theorem~\ref{LAN}, which will entail:
\begin{thm}
    \label{lower_bound}
Let $G_0\in\Gm0$ and set $\eps_n=n^{-1/ (4(m - m_0) +
  2)+\kappa}$ for any $\kappa >0$. Under Assumption~A, for any
sequence of estimators $\widehat{G}_n$ based on i.i.d. $n$-samples,
    \begin{align*}
        \liminf_{n\to \infty} \sup_{G_1\in \Gm{} \cap\mathcal{W}_{G_0}(\eps_n)}n^{1/ (4(m - m_0) + 2)} \,\mathbb{E}_{G_1}\left[W(G_1, \widehat{G}_n)\right]& > 0.
    \end{align*}
\end{thm}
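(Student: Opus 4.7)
The plan is to deduce the lower bound from Theorem~\ref{LAN} via the standard H\'ajek-Le Cam asymptotic minimax theorem applied to a one-parameter subfamily. Write $\Delta m := m - m_0$ and $\delta_n := n^{-1/(4\Delta m + 2)}$; since $\eps_n = n^{\kappa}\,\delta_n$ with $\kappa > 0$, we have $\delta_n \ll \eps_n$. Theorem~\ref{LAN} provides a one-parameter family $t \mapsto G_{t,n} \in \Gm{}$, indexed by $t$ in a fixed compact $K := [-T, T] \subset \R$, such that the product experiments $\bigl(F(\cdot, G_{t,n})^{\otimes n}\bigr)_{t \in K}$ are locally asymptotically normal with non-degenerate limit Fisher information, and such that the path is calibrated to the Wasserstein metric:
\[
W(G_{t_1,n}, G_{t_2,n}) \asymp |t_1 - t_2|\, \delta_n \qquad (t_1, t_2 \in K),
\]
uniformly in $n$. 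In particular $W(G_{t,n}, G_0) \le T\,\delta_n \le \eps_n$ for $n$ large enough, so $\{G_{t,n} : t \in K\} \subset \Gm{} \cap \mathcal{W}_{G_0}(\eps_n)$.

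Given an arbitrary sequence of estimators $\widehat{G}_n$, I define $\widehat{t}_n \in K$ to be any measurable minimizer of $s \mapsto W(\widehat{G}_n, G_{s,n})$ over $K$. The triangle inequality, combined with the calibration above and the fact that $W(\widehat{G}_n, G_{\widehat{t}_n, n}) \le W(\widehat{G}_n, G_{t, n})$, yields
\[
\delta_n\,|t - \widehat{t}_n| \lec W(G_{t,n}, G_{\widehat{t}_n, n}) \le W(G_{t,n}, \widehat{G}_n) + W(\widehat{G}_n, G_{\widehat{t}_n, n}) \le 2\,W(G_{t,n}, \widehat{G}_n),
\]
whence $\mathbb{E}_{G_{t,n}} |t - \widehat{t}_n| \lec \delta_n^{-1}\,\mathbb{E}_{G_{t,n}} W(G_{t,n}, \widehat{G}_n)$. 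The LAN property and the H\'ajek-Le Cam asymptotic minimax theorem \citep{hajek1972local} then produce a constant $c > 0$, depending only on $T$ and the limit Fisher information, such that
\[
\liminf_{n \to \infty} \sup_{t \in K} \mathbb{E}_{G_{t,n}} |t - \widehat{t}_n| \ge c.
\]
Combining the two displays, and enlarging the supremum from $K$ to $\Gm{} \cap \mathcal{W}_{G_0}(\eps_n)$, yields the claimed bound since $\delta_n^{-1} = n^{1/(4\Delta m + 2)}$.

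The main obstacle is therefore Theorem~\ref{LAN} itself. Following the moment-matching heuristic preceding Theorem~\ref{lower_bound}, one perturbs $G_0$ only near the distinguished support point $\theta_0 \in \interior{\Theta}$ from Assumption~A, replacing one of its components $w_0 \delta_{\theta_0}$ by a $(\Delta m + 1)$-point cluster of mass $w_0$ with spread of order $|t|\,\delta_n$. Via Lindsay's Hankel trick (Theorem~\ref{Lindsay}), the positions and weights of the cluster can be parameterized smoothly by $t$ so that, for each $t_1 \ne t_2$, the signed measure $G_{t_1, n} - G_{t_2, n}$ has all moments around $\theta_0$ of orders $0, 1, \ldots, 2\Delta m$ equal to zero. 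A Taylor expansion under the $(p,q)$-smoothness of Assumption~A then shows that the per-sample log-likelihood ratio scales like $(t\,\delta_n)^{2\Delta m + 1} f^{(2\Delta m + 1)}(X, \theta_0)/f(X, G_0)$ to leading order; summing over $n$ i.i.d.\ samples produces the scaling $\sqrt{n}\,\delta_n^{2\Delta m + 1} = 1$, precisely the LAN rate, and the assumption $\int |f^{(2\Delta m + 1)}(\cdot, \theta_0)|\,\dd\lambda > 0$ ensures that the limit Fisher information is strictly positive.
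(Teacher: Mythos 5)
Your route is genuinely different from the paper's and is workable in outline. Both arguments reduce to the one-parameter family of Theorem~\ref{LAN}, but the paper deliberately avoids the asymptotic minimax machinery: it fixes just two points $u=0$ and $u>0$, uses the LAN expansion only to obtain a contiguity-type bound $\liminf_n\inf_{A:\,\P_{G_n(0)}(A)\ge 3/4}\P_{G_n(u)}(A)\ge \tfrac14 \e^{-u^2\Gamma/2}$, and concludes by a two-point testing argument on the event $\{n^{1/(4(m-m_0)+2)}W(G_n(0),\widehat G_n)\ge a\}$ together with the triangle inequality. You instead project $\widehat G_n$ onto the path $\{G_{t,n}:t\in K\}$, transfer the Wasserstein loss to the Euclidean loss $|t-\widehat t_n|$, and invoke the H\'ajek--Le Cam asymptotic minimax theorem on a fixed compact interval. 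This is exactly the route the paper alludes to and sidesteps (``they could be overcome, but it is easier \dots using just two points and contiguity''): your compactification in $t$ and the projection step do dispose of the degeneracy of the induced loss as $u\to\infty$, at the price of needing more from the construction than the paper's two-point argument does.

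Two steps are asserted as if supplied by Theorem~\ref{LAN} but are not, and they must be filled in. First, the calibration $W(G_{t_1,n},G_{t_2,n})\asymp|t_1-t_2|\,\delta_n$ uniformly on $K\times K$: part (b) of Theorem~\ref{LAN} only gives, for each \emph{fixed} pair $(u,u')$, a lower bound $c(u,u')\,\delta_n$ with a pair-dependent constant and no linear dependence on $|u-u'|$, whereas your reduction $\delta_n|t-\widehat t_n|\lec W(G_{t,n},G_{\widehat t_n,n})$ needs a lower bound linear in $|t-\widehat t_n|$ with a constant uniform over the (random) pair. This is true and provable from the construction: the proof of Theorem~\ref{LAN} gives the exact scaling $W(G_n(u),G_n(u'))\asymp n^{-1/(4(m-m_0)+2)}\,W(G_1(u),G_1(u'))$, the supports of $G_1(u)$ for $|u|\le T$ lie in a fixed compact set, and $u$ is by construction the $(2(m-m_0)+1)$-th moment of $G_1(u)$, so taking $h\mapsto h^{2(m-m_0)+1}$ (Lipschitz on that compact set) as test function in the dual representation \eqref{dualW} yields $|u-u'|\underset{T}{\lec} W(G_1(u),G_1(u'))$; but this argument is absent from your write-up, and the two-sided ``$\asymp$'' (the upper bound) is neither given by the theorem nor needed. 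Second, Theorem~\ref{LAN}(c) provides convergence of experiments only along a subsequence $\N_0$ and for $|u|\le U(n)$, so the minimax bound you invoke holds a priori only along $\N_0$, while the statement requires a liminf over all $n$; this is repaired by arguing inside an arbitrary subsequence realizing the liminf and extracting a further subsequence along which the LAN expansion holds (possible since $\E_{G_n(0)}|Z_{1,n}|^2\asymp 1$), a point your proof should at least mention. Your closing sketch of Theorem~\ref{LAN} is only heuristic, but since you may cite that theorem as given, this is harmless.
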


  
\medskip

Theorem~\ref{lower_bound} gives a lower bound on the local asymptotic minimax rate of estimation. The corresponding upper bounds, both local and global, are given by the following theorem: 
\begin{thm}
    \label{main}
    Let $G_0\in\Gm0$. Then, under Assumption~B(2m), there is an $\eps >0$ such that the minimum distance estimator \eqref{gn} in $\Glm$ satisfies
    \begin{align}
        \label{main1}
    \mathbb{E}_{G_1}\left[ W(\widehat{G}_n,G_1) \right] &\lec \frac{1}{n^{1/(4(m-m_0)+2)}}
    \end{align}
 uniformly for $G_1$ in $\Glm\cap \mathcal{W}_{G_0}(\eps)$, where $n$ is the sample size.

 Moreover, uniformly for $G_1$ in $\Glm$,
 \begin{align}
     \label{main2}
     \mathbb{E}_{G_1}\left[ W(\widehat{G}_n,G_1) \right] & \lec \frac{1}{n^{1/(4m - 2)}}.
\end{align}
\end{thm}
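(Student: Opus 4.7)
The plan is to combine two ingredients: a deterministic inversion inequality relating a power of the Wasserstein distance to the sup norm distance between mixture distribution functions, and the classical Dvoretzky--Kiefer--Wolfowitz bound controlling the empirical process at rate $n^{-1/2}$. Via Lemma~\ref{lemDKW} referred to in the excerpt, combined with the defining property of the minimum distance estimator and the triangle inequality, one immediately obtains
\begin{equation*}
\mathbb{E}_{G_1}\bigl[\|F(\cdot, \widehat{G}_n) - F(\cdot, G_1)\|_\infty\bigr] \lec n^{-1/2}.
\end{equation*}

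The heart of the argument is an inversion inequality of the form: under Assumption~B($2m$) and with $G_0 \in \Gm0$, there exists $\eps > 0$ such that for all $G_1, G_2 \in \Glm \cap \mathcal{W}_{G_0}(\eps)$,
\begin{equation*}
W(G_1, G_2)^{2(m - m_0) + 1} \lec \|F(\cdot, G_1) - F(\cdot, G_2)\|_\infty.
\end{equation*}
I would expect to find such a statement among the key intermediate results of Section~\ref{key}; it is the corrected form of \citet[Lemma~2]{Chen}. Its proof is the substantive difficulty of the paper and presumably rests on the coarse-graining tree of Section~\ref{tree} to handle configurations in which several support points of $G_1$ or $G_2$ collapse onto the same support point of $G_0$, which is precisely the scenario \citeauthor{Chen} overlooked. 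Heuristically, $2(m-m_0)+1$ is the first order in the Taylor expansion of $F(\cdot, G_i) - F(\cdot, G_0)$ in $\theta$ that cannot be killed by the $2(m-m_0)$ free parameters supplied by the extra components, and Proposition~\ref{infimum} provides the quantitative non-degeneracy substitute at that order.

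To use the inversion inequality I first need to localize $\widehat{G}_n$. A standard Wald-type argument using compactness of $(\Glm, W)$, continuity of $G \mapsto \|F(\cdot, G) - F_n\|_\infty$, and identifiability (guaranteed by $k$-strong identifiability for any $k \ge 1$) shows that the MDE is uniformly consistent on compact subsets of $\Glm$. Consequently, for $G_1 \in \Glm \cap \mathcal{W}_{G_0}(\eps/2)$, the event $\{\widehat{G}_n \in \mathcal{W}_{G_0}(\eps)\}$ has probability tending to one, and a routine truncation (bounding $W$ by $\Diam \Theta$ on the complement) reduces matters to that event. Applying the inversion inequality to $G_1$ and $\widehat{G}_n$, combining with the sup norm bound above, and invoking Jensen's inequality for the concave map $t \mapsto t^{1/(2(m-m_0)+1)}$ gives
\begin{equation*}
\mathbb{E}_{G_1}[W(\widehat{G}_n, G_1)] \lec \bigl(\mathbb{E}_{G_1}[\|F(\cdot, \widehat{G}_n) - F(\cdot, G_1)\|_\infty]\bigr)^{1/(2(m-m_0)+1)} \lec n^{-1/(4(m-m_0)+2)},
\end{equation*}
which is \eqref{main1} after renaming $\eps/2$ as $\eps$.

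The global bound \eqref{main2} follows from the local one via compactness. Every $G_0 \in \Glm$ lies in $\Gm0$ for some $m_0 \in \lb 1, m \rb$, and the previous step furnishes a Wasserstein neighborhood of $G_0$ on which the MDE converges at rate $n^{-1/(4(m-m_0)+2)}$. Extracting a finite subcover of $(\Glm, W)$ and taking the worst exponent, attained at $m_0 = 1$, yields the uniform rate $n^{-1/(4m-2)}$. The main obstacle throughout is the inversion inequality; the remainder is a packaging of DKW, consistency, Jensen and a compactness argument.
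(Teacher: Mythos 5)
Your argument for the local bound \eqref{main1} is essentially the paper's: the inversion inequality you postulate is exactly Theorem~\ref{orders}.\ref{local} (exponent $2m-2m_0+1$), and the paper also localizes $\widehat{G}_n$, applies that inequality to the pair $(G_1,\widehat{G}_n)$, and concludes via the DKW/Jensen mechanism packaged in Lemma~\ref{lemDKW}. One point you should make quantitative: ``probability tending to one'' for the event $\{\widehat{G}_n\in\mathcal{W}_{G_0}(\eps)\}$ is not enough, since the truncation term $\Diam(\Theta)\,\P_{G_1}(A^c)$ must itself be $O\big(n^{-1/(4(m-m_0)+2)}\big)$ uniformly over $G_1\in\Glm\cap\mathcal{W}_{G_0}(\eps/2)$. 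The paper gets this by noting (compactness plus identifiability) that there is a fixed $z_\eps>0$ with $\|F(\cdot,G_1)-F(\cdot,G_2)\|_\infty\ge z_\eps$ whenever $G_1\in\mathcal{W}_{G_0}(\eps/2)$ and $G_2\notin\mathcal{W}_{G_0}(\eps)$, so that leaving the ball forces $\|F_n-F(\cdot,G_1)\|_\infty\ge z_\eps/4$ and DKW gives an exponentially small, $G_1$-uniform bound; your Wald-type consistency sketch should be upgraded to this quantitative form, which uses exactly the ingredients you list. Where you genuinely diverge is \eqref{main2}: the paper proves a global deterministic inversion inequality, Theorem~\ref{orders}.\ref{global} with exponent $2m-1$ valid for all pairs in $\Glm$ (itself deduced from the local version by a compactness argument on the ratio, treating separately the case where the limiting pair stays at positive Wasserstein distance), and then applies Lemma~\ref{lemDKW} with $d=2m-1$ directly. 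You instead keep the statistical local statement \eqref{main1}, cover the compact $(\Glm,W)$ by finitely many of the balls it provides, and take the worst exponent $m_0=1$ and the largest of the finitely many constants. Your route is valid and avoids stating Theorem~\ref{orders}.\ref{global} at all; the paper's route isolates the deterministic comparison of metrics as a standalone result (useful elsewhere) and keeps the probabilistic step identical in the local and global cases, but at the level of Theorem~\ref{main} the two arguments deliver the same conclusion.
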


We prove it by establishing a uniform control of $W(G_1, G_2)^{2m - 2m_0 + 1}$ by $\left\lVert F(\cdot, G_1) - F(\cdot, G_2) \right\rVert_{\infty}$ in Theorem~\ref{orders}.

Obtaining this control is quite technical, however.  To do so, we consider sequences of couples $(G_{1,n}, G_{2,n})$ minimizing the relevant ratios, and express $F(\cdot, G_{1,n}) - F(\cdot,G_{2,n})$ as a sum on their components $F(\cdot, \theta_{j,n})$ and relevant derivatives. A difficulty arises: distinct components $\theta_{j,n}$ may converge to the same $\theta_j$, leading to cancellations in the sums. Forgetting this case was the mistake by \citet{Chen} in the proof of their Lemma 2. We overcome the issue in Section~\ref{tree} by using a coarse-graining tree: each node corresponds to sets of components whose pairwise distance decrease at a given rate. We may then use Taylor expansions on each node and its descendants, while ensuring that we keep non-zero terms (Lemma \ref{lemrec}). 

\begin{rems}
    \label{rem_minimax}
    \begin{itemize}
        \item{Theorems~\ref{lower_bound} and~\ref{main} together imply that the optimal local asymptotic minimax rate is $n^{-1/(4(m-m_0)+2)}$ for estimating a mixture with at most $m$ components around a mixture with $m_0$ components.}
    \item{The rate is driven by $m - m_0$, that is, it gets harder to estimate the parameters of a mixture when it is close to a mixture with less components.}
\item{The worst case is when $m_0 = 1$, yielding a global minimax rate of estimation $n^{-1 / (4 m + 2)}$. The rate gets worse when more components are allowed. So that the nonparametric rates for estimating mixtures with an infinite number of components like in deconvolution appear natural.}
\item{On the other hand, when the number of components is known, that is $m = m_0$, we have the usual local minimax rate $n^{-1/2}$.}
\item{The global minimax rate on the mixtures with exactly $m$ components $\mathcal{G} _m$ stays at $n^{-1 / (4 m + 2)}$, because $\mathcal{G} _m$ is not compact, and Theorem~\ref{lower_bound} still apply in the vicinity of $m_0$-component mixtures.}
\end{itemize}    
\end{rems}

The slower rate $ n^{-1 / (4 m + 2)}$ might be a little surprising when for example some Bayesian estimators have $n^{-1/4}$ rate of convergence \citep{Ish}. However this convergence rate is not the local minimax rate, but is closer to a pointwise rate of convergence, that is the speed at which an estimator converges to a fixed $G$ when $n$ increases. The difference with local minimax may be viewed as the loss of uniformity in $G$. We now study the optimal pointwise rates everywhere.

\subsection{Pointwise rate and superefficiency}
\label{sub:pointwise}

One motivation for local minimax results was to make clear how the Hodges' estimator \citep[ch.8]{VDV} and other superefficient estimators could cohabit with Cramér-Rao bound, and how much they could improve on it.

Specifically, a superefficient estimator can have a better pointwise convergence rate than any regular estimator, but not a better local minimax convergence rate \citep{hajek1972local}. Moreover, it turns out that they can only have a better pointwise rate on a Lebesgue-null set \citep[ch.8]{VDV}.

Now, the set of parameters of mixtures with less than $m$ components
$\mathcal{G} _{< m}$ is a Lebesgue-null set among those of mixing distributions 
with at most $m$ components $\mathcal{G} _{\le m}$. Hence, we might expect that, by biasing the estimators toward the low numbers of components, we might attain better pointwise rates on $\mathcal{G} _{< m}$, up to $n^{-1/2}$, which is the value when the number of components is known. By letting $m$ go to infinity, we would have this pointwise rate for all finite mixing distributions. It turns out this is indeed the case.

\medskip

An estimator achieving $n^{-1/2}$ rate may be built from minimum distance estimators \eqref{gn}.
For all $m$ we denote by $\widehat{G}_{n,m}$ the minimum distance estimator in $\Glm$.
For any fixed $\kappa\in (0,1/2)$, set
  \begin{equation}
 \label{est_fin}   
\widehat{G}_n  = \widehat{G}_{n,\hat{m}},
  \end{equation}
with
\begin{equation}
 \label{nb_comp} 
 \hat{m} = \hat{m}_n=\inf \left\{ m\ge 1 :  \| F(\cdot, \widehat{G}_{n,m}) - F_n \|_{\infty} \le n^{-1/2 + \kappa } \right\}.
\end{equation}
Since the typical distance between empirical and cumulative distribution functions is $n^{-1/2}$, this $\widehat{m} $ is the lowest number of components that is not clearly insufficient. 

We will obtain:
\begin{thm}
    \label{choose_and_estimate}
    Under Assumption~B(1), for any finite mixing distribution $G_0 \in \mathcal{G} _{< \infty}$,
   \begin{align*}
       \mathbb{E}_{G_0} \left[ W(\widehat{G}_n, G_0) \right] & \lec n^{-1/2}.  
   \end{align*}
   Notice that the above inequality is not uniform in $G_0$.
\end{thm}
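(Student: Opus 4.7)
The plan is to show that the data-driven order $\hat m$ in \eqref{nb_comp} matches the true order $m_0$ of $G_0$ with overwhelming probability, after which the problem reduces to a locally regular parametric one and a first-order Taylor expansion will deliver the parametric rate $n^{-1/2}$. Write $A_n = \{\|F_n - F(\cdot, G_0)\|_\infty \le n^{-1/2 + \kappa}\}$; by DKW (Lemma~\ref{lemDKW}), $\mathbb{P}(A_n^c)$ decays like $\exp(-c n^{2\kappa})$, and $\mathbb{E}_{G_0}[\|F_n - F(\cdot, G_0)\|_\infty] \lec n^{-1/2}$.

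First I would pin down $\hat m$. Since $G_0 \in \Glmo$, the defining property of $\widehat G_{n,m_0}$ yields $\|F(\cdot, \widehat G_{n,m_0}) - F_n\|_\infty \le \|F(\cdot, G_0) - F_n\|_\infty \le n^{-1/2+\kappa}$ on $A_n$, so $\hat m \le m_0$. For $m_0 \ge 2$, the reverse inequality will come from identifiability: the function $G \mapsto \|F(\cdot, G) - F(\cdot, G_0)\|_\infty$ is lower semi-continuous on the $W$-compact set $\mathcal{G}_{\le m_0 - 1}$ and, by $1$-strong identifiability together with $G_0 \notin \mathcal{G}_{\le m_0 - 1}$, strictly positive everywhere on it. Hence its infimum $\delta_0 > 0$. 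On $A_n$ with $n$ large enough that $2 n^{-1/2+\kappa} < \delta_0$, any $\widehat G_{n,m}$ with $m \le m_0 - 1$ would force $\|F(\cdot, \widehat G_{n,m}) - F_n\|_\infty \ge \delta_0 - n^{-1/2+\kappa} > n^{-1/2+\kappa}$, contradicting the definition of $\hat m$. Thus $\hat m = m_0$ on $A_n$ eventually.

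Once $\hat m = m_0$, the triangle inequality gives $\|F(\cdot, \widehat G_n) - F(\cdot, G_0)\|_\infty \le 2\|F_n - F(\cdot, G_0)\|_\infty \to 0$, which by identifiability and compactness of $\Glmo$ forces $W(\widehat G_n, G_0) \to 0$. For $n$ large, the support points of $\widehat G_n$ can then be matched bijectively with the $m_0$ support points $\theta_j^{(0)}$ of $G_0$; a first-order Taylor expansion produces
\begin{align*}
    F(\cdot, \widehat G_n) - F(\cdot, G_0) = \sum_{j=1}^{m_0}\bigl(\alpha_j F(\cdot, \theta_j^{(0)}) + \beta_j F^{(1)}(\cdot, \theta_j^{(0)})\bigr) + R_n,
\end{align*}
with $\alpha_j = \pi_j - \pi_j^{(0)}$, $\beta_j = \pi_j(\theta_j - \theta_j^{(0)})$, and $\|R_n\|_\infty \lec \omega(W(\widehat G_n, G_0))\, W(\widehat G_n, G_0)$ by the uniform continuity modulus in B(1). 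Proposition~\ref{infimum} with $k=1$, applied to the fixed, separated $(\theta_j^{(0)})$, bounds the principal term below by a constant times $\|(\alpha, \beta)\|$; the Kantorovich--Rubinstein dual gives $W(\widehat G_n, G_0) \lec \|(\alpha, \beta)\|$; and the remainder is absorbed because $\omega(W(\widehat G_n, G_0)) \to 0$. This produces the local inequality $W(\widehat G_n, G_0) \lec \|F(\cdot, \widehat G_n) - F(\cdot, G_0)\|_\infty$.

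Chaining these pieces, on $A_n$ for $n$ large, $W(\widehat G_n, G_0) \lec \|F_n - F(\cdot, G_0)\|_\infty$; on $A_n^c$, $W \le \Diam \Theta$. Hence
\begin{align*}
    \mathbb{E}_{G_0}[W(\widehat G_n, G_0)] \lec \mathbb{E}_{G_0}[\|F_n - F(\cdot, G_0)\|_\infty] + \Diam \Theta \cdot \mathbb{P}(A_n^c) \lec n^{-1/2}.
\end{align*}
The hard part will be the bookkeeping around the threshold $n^{-1/2 + \kappa}$: it must be small enough for the compactness lower bound $\delta_0$ to rule out $m < m_0$, yet large enough for $m = m_0$ candidates to always be accepted; meanwhile Assumption B(1) is exactly strong enough to absorb the Taylor remainder, so the sharp rate $n^{-1/2}$ survives in the final local inequality.
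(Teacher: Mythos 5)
Your proposal is correct and follows essentially the same route as the paper: establish $\hat m=m_0$ on the high-probability event via the compactness/identifiability gaps for $\mathcal{G}_{<m_0}$ and for $\{W(G,G_0)\ge\eps\}$, then convert $\|F(\cdot,\widehat G_n)-F(\cdot,G_0)\|_\infty\le 2\|F_n-F(\cdot,G_0)\|_\infty$ into a Wasserstein bound and finish with Lemma~\ref{lemDKW}. The only difference is that where the paper invokes Theorem~\ref{equal_compo} as a black box, you re-derive the needed comparison inequality in the special case $G_2=G_0$ by the same structure-of-nearby-mixtures, first-order Taylor expansion and Proposition~\ref{infimum} argument that the supplement uses to prove that theorem (slightly simpler here since the expansion points are the fixed, separated support points of $G_0$), so this is a valid inlined specialization rather than a genuinely different method.
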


\begin{rems}
    \label{plus}
    \begin{itemize}
        \item{The rate $n^{-1/2}$ cannot be improved since it is the rate if the number of components is known beforehand.}
        \item{This is slightly stronger than just checking that we find the right number of components and then applying Theorem~\ref{main}, because we need much less regularity. Only Assumption~B(1) is required, instead of B(2m). That is, we do not need more smoothness when the number of components increases. Under the hood we rely on the bound in Theorem~\ref{equal_compo} instead of Theorem~\ref{orders}.}
        \item{The estimation of the number of components $\hat{m}$  and the estimation of $\widehat{G}$ within $\mathcal{G} _{\hat{m}}$ are not associated. For example, we may estimate $\hat{m}$ with Equation \eqref{nb_comp}, and then use the maximum likelihood estimator $\widehat{G}$ on $\mathcal{G} _{\hat{m}}$. Conversely, we may estimate the number of components using \citeauthor{MR3015722}'s \citeyearpar{MR3015722} penalized maximum likelihood estimator.}
\end{itemize}
\end{rems}

\subsection{Interpretation and practical consequences}
\label{practical}

Disagreement between local minimax and pointwise rates everywhere might be rare enough that it is worth recalling what it means.

At a given point $G$, the asymptotic rate of convergence to $G$ will be the pointwise rate $C(G) n^{-1/2}$. However, the estimator will enter this asymptotic regime only after a long time. More precisely, it enters this regime after that $G$ is not anymore in any of the balls used in the local minimax bound.
Alternatively, we may view this situation as the constant $C(G)$ exploding when $G$ is close to certain $G_0$.

In our case, imagine we have a mixture with three components, all within distance $\delta $ of $\theta _0$. Then about $\delta ^{-(4(3 -1) + 2)} = \delta ^{-10}$ data points are necessary to get an estimator with an error of $\delta $.

In particular, if $G_1$ and $G_2$ are two such three-component mixtures, chosen to have the same first four moments, and $\tilde{G}_1$ and $\tilde{G}_2$ are the same mixtures, rescaled to be ten times closer, we will need $10^{10}$ as many data points to tell them apart as for $G_1$ and $G_2$.

As a consequence, if the components of the mixture to be estimated are not far apart one from the other, it is quite often impossible to get enough data points to get an appropriate estimate.

An experimentalist with any leeway in what he measures (use of different markers, say) might then wish to ensure that the peaks are far apart, even at the cost of many data points.

\subsection{Further work}
\label{extensions}

This article contains the proof that the optimal local minimax rate of estimation around a mixture with $m_0$ components among mixtures with $m$ components is $n^{-1/(4(m -m_0) + 2)}$, when the parameter space $\Theta $ is a compact subset of $\mathbb{R} $.

We think that extension to a multivariate $\Theta$ should be easy enough, much like \citet{Ngu} did for the former erroneous result. On the other hand, non-compactness of $\Theta $ would probably bring about technical difficulties, and cases where the result would not hold. Stronger forms of identifiability would probably be required in general, to avoid problems with limits.

Finally, another line of inquiry are the results that might be expected in a Bayesian framework. The most natural equivalent to the convergence rate of the \emph{a posteriori} distribution to the real parameter is the pointwise rate of convergence. Hence the question: can we build Bayesian estimators where the \emph{a posteriori} distributions converge at rate $n^{-1/2}$ everywhere? Of course, the convergence would not be uniform.

\section{Key tools}
\label{key}

\subsection{Local asymptotic normality and Theorem~\ref{lower_bound}}
\label{sec:lowerbound}

We prove Theorem~\ref{lower_bound} by displaying local asymptotic families $\{G_n(u), u\in \mathbb{R} \}$ with scale factor $n^{-1/(4(m - m_0) +2)}$.
 A far-from-general definition, but sufficient for our purposes, of local asymptotic normality \citep{LeCam} is as follows:
\begin{defin}
    \label{defLAN} Given densities $f_{n,u}$ ($n\in\N,u\in \R$) with
    respect to some dominating  measure, consider experiments
    $\mathcal{E} _n =\left\{ f_{n,u}, u\in \mathcal{U} _n \right\} $
    where the $\mathcal{U} _n,n\in\N,$ are real sets such that each real number be in $\mathcal{U}_n$ for $n$ large enough. Let $X$ have density $f_{n,0}$ and consider the log-likelihood ratios:
    \begin{align*}
        Z_{n, 0}(u) & = \ln \left( \frac{f_{n, u}(X)}{f_{n,0}(X)}  \right) .
\end{align*}
    Suppose that there is a positive constant $\Gamma $ and a sequence of random variables $Z_n$ with $Z_n \xrightarrow[]{d} \mathcal{N} (0, \Gamma )$, such that for all $u\in \mathbb{R} $,
    \begin{align}
    \label{ELAN}
    Z_{n, 0} (u) - u Z_n + \frac{u^2}{2} \Gamma & \xrightarrow[n\to\infty]{P}  0.
\end{align}
The sequence of experiments $\mathcal{E} _n $ is said \emph{locally asymptotically
  normal} (LAN) and \emph{converging} to the Gaussian shift experiment $\left\{ \mathcal{N} (u\Gamma, \Gamma ), u \in \mathbb{R}  \right\} $.
\end{defin}

Note that if $X$ were exactly $\mathcal{N} (u\Gamma, \Gamma
)$-distributed,  the l.h.s of \eqref{ELAN} would be zero, with a
suitable $Z_n$ exactly $\mathcal{N} (0, \Gamma )$-distributed. In
addition, intuitively, (almost) anything that can be done in a
Gaussian shift experiment can be done asymptotically in a  LAN sequence of experiments.

\medskip

Now, consider a mixing distribution $G_0  = \sum_{j=1}^{m_0}  \pi_j \delta_{\theta _j} \in \mathcal{G} _{m_0}$ with $m_0$-th support  point $\theta _{m_0}$ 
   in the interior of $\Theta$.

   Then, for $n$ big enough, $\theta _{j,n}(u) = \theta _{m_0} +
n^{-1/(4d-2)} h_j(u) $ is in $\Theta $. The LAN family will be 
\begin{equation}
  \label{mixdist}
 G_n(u)  = \sum_{j=1}^{m_0 - 1} \pi_j \delta_{\theta _j} + \pi_{m_0}
\sum_{j= m_0}^m \pi_j(u) \delta _{\theta_{j,n}(u)},
\end{equation}
with $h_j(u)$ and $\pi_j(u)$ chosen so that the first moments around $\theta _{m_0}$ are the same for all $u$, that is, for some relevant $\mu_k$,
\begin{align*}
    \sum_{j=m_0}^m \pi_j(u) h_j(u)^k & = \mu_k & \text{for $k \leq 2(m - m_0)$,} \\
    \sum_{j=m_0}^m \pi_j(u) h_j^{2(m - m_0) + 1} &= u,
\end{align*}
The reason why they exist is Theorem~2A by \citet{Lindsay} on the matrix of moments:
\begin{thm}
    \label{Lindsay}
Given numbers $1,\mu_1,\ldots,\mu_{2d}$, write $M_k$ for the
 $k+1$ by $k+1$ (Hankel)  matrix with entries $(M_k)_{i,j} =
 \mu_{i+j-2}$ for $k\in\lb 1,d\rb$. 
    \begin{enumerate}[a.]
    \item The numbers $1, \mu_1, \dots, \mu_{2d}$ are the moments of a
    distribution with exactly $p$ points of support if and only if
    $\det M_k > 0$ for $k\in\lb 1,d-1\rb$ and $\det M_p = 0$.
  \item\label{lind} If the numbers $1, \mu_1, \dots, \mu_{2d-2}$ satisfies $\det M_k > 0$ for $k\in\lb 1,d-1\rb$ and $\mu_{2d-1}$ is any scalar, then there exists a unique distribution with exactly $d$ points of support and those initial $2d-1$ moments. 
    \end{enumerate} 
\end{thm}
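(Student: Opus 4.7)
The plan is to identify $M_k$ with the Gram matrix of the monomials $1, x, \ldots, x^k$ under the bilinear form $(p,q)\mapsto L(pq)$, where $L$ is the linear functional on polynomials determined by $L(x^j) = \mu_j$. Then positivity and rank conditions on $M_k$ translate directly into statements about $L$ acting on squares.

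For part (a), the forward direction is immediate. If $\mu_j = \int x^j\,\dd \nu$ with $\nu = \sum_{i=1}^p w_i \delta_{x_i}$ and $w_i > 0$, then for any polynomial $P(x) = \sum_i a_i x^i$ of degree at most $k$,
\[
a^\top M_k a \;=\; L(P^2) \;=\; \sum_{i=1}^p w_i\, P(x_i)^2 \;\ge\; 0,
\]
with equality iff $P$ vanishes at all the $x_i$. No nonzero polynomial of degree $\le p-1$ can vanish at $p$ distinct points, hence $M_k$ is positive definite for $k \le p-1$, whereas $\prod_i(x-x_i)$ lies in the kernel of $M_p$, giving $\det M_p = 0$.

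The converse is where the real work lies. I would pick $a\in\ker M_p$ and let $P$ be the associated polynomial; its degree must be exactly $p$, since otherwise the truncated vector would lie in $\ker M_{p-1}$, contradicting $\det M_{p-1}>0$. The main obstacle is to show that $P$ has $p$ distinct real roots. I would run the classical Gauss--Jacobi argument: if $P$ had only $r<p$ real sign changes at $y_1,\ldots,y_r$, then $Q(x) = \prod_i (x-y_i)$ has degree $r<p$, so $L(PQ)=0$ because $Q$ is a linear combination of $1,x,\ldots,x^{p-1}$ and the vector of coefficients of $P$ lies in $\ker M_p$; but $PQ$ has constant sign and is nonzero, and decomposing $PQ$ via squares of polynomials of degree $<p$ forces $L(PQ)>0$ from the strict positivity of $L$ encoded by $\det M_k > 0$ for $k<p$, a contradiction. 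Once $P$ has $p$ simple real roots $x_1,\ldots,x_p$, the weights $w_i$ are determined by inverting the Vandermonde system $\sum_i w_i\, x_i^j = \mu_j$ for $j=0,\ldots,p-1$, and positivity follows from the identity $w_j = L(\ell_j^2)$, where $\ell_j$ is the Lagrange interpolant at $x_j$. Matching of the remaining moments $\mu_p,\ldots,\mu_{2d}$ is then a consequence of the fact that $L$ annihilates $P$ times any polynomial of degree $<p$, together with polynomial division.

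For part (b), the key observation is that $\det M_d$ is affine in $\mu_{2d}$ with leading coefficient $\det M_{d-1}>0$ (expand along the last column), so there is a unique $\mu_{2d}$ making $\det M_d=0$. Cauchy interlacing of eigenvalues combined with $M_{d-1}\succ 0$ forces $M_d\succeq 0$ for this choice, and part (a) then produces a $d$-point distribution $\nu$ with moments $\mu_0,\ldots,\mu_{2d}$, matching in particular the prescribed $\mu_{2d-1}$. Uniqueness follows because any $d$-point representing measure forces $\det M_d=0$ by the forward direction of (a), and by linearity this pins down the same value of $\mu_{2d}$, hence the same kernel polynomial and the same support points and weights.
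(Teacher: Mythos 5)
Your proposal cannot be checked against an internal argument, because the paper offers none: Theorem~\ref{Lindsay} is quoted from Lindsay's Theorem~2A and used as a black box (only part~b.\ is actually invoked, in the construction of the LAN family). Taken on its own, your proof is the classical truncated-moment-problem / Gaussian-quadrature argument (Sylvester's criterion giving $M_{p-1}\succ 0$, the kernel polynomial of $M_p$, the Gauss--Jacobi sign-change trick, Christoffel-number positivity $w_j=L(\ell_j^2)$), and it is essentially correct; it is also close in spirit to Lindsay's own treatment via moment matrices. Three points deserve an explicit line if you write it out. First, the statement as printed is slightly garbled: demanding $\det M_k>0$ for $k\le d-1$ together with $\det M_p=0$ is only consistent when $p=d$; your proof establishes the correct version (positive definiteness up to $p-1$, singularity at $p$), and with $p=d$ your division step $f=qP+s$ with $\deg q\le d$ does reach every moment up to $2d$, since $M_d a=0$ gives exactly $L(x^jP)=0$ for $j\in\lb 0,d\rb$. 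Second, the sign-change step needs its parity bookkeeping recorded: the number $r$ of sign changes of $P$ has the same parity as $p$, so $r\le p-2$ and $PQ$ is (up to sign) a nonnegative polynomial of even degree at most $2p-2$, whence its two-squares decomposition uses polynomials of degree at most $p-1$ — precisely where $M_{p-1}\succ 0$ is applied. Third, in the uniqueness step of part~b.\ you implicitly use that $\ker M_d$ is one-dimensional; this follows from $\det M_{d-1}\ne 0$ (a kernel vector with vanishing leading coordinate would put a nonzero vector in $\ker M_{d-1}$), and it is what lets you conclude that the monic support polynomial, hence the support points and then the weights, are pinned down. The Cauchy-interlacing remark giving $M_d\succeq 0$ is correct but superfluous: your converse argument only ever uses $\det M_d=0$ and $M_{d-1}\succ 0$.
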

   
With such a family, we can prove the following theorem, whose proof we delay to Section~\ref{proofs}:
    \begin{thm}
        \label{LAN}
       Let $G_0  = \sum_{j=1}^{m_0}  \pi_j \delta_{\theta _j} \in \mathcal{G} _{m_0}$ be a mixing distribution whose $m_0$-th support point is
   in the interior of $\Theta$. Let $m \ge  m_0$. 
  
    Then there are mixing distributions $G_n(u)$ ($n\ge 0,u\in\R$)
    all in
    $\Gm{}$ such that
    \begin{enumerate}[a.]
        \item \label{lan1} $W(G_{n}(u), G_0) \to 0$ for all $u\in\R$. More precisely,
\[W(G_{n}(u), G_0) \underset{u}{\lec} n^{-1/ (4(m -m_0) +2)};\]
    \item \label{lan2} The mixing distributions $G_n(u)$ get closer at rate $n^{-1/(4(m
        - m_0) + 2)} $: for all $u$ and $u'$, 
      \[W(G_{n}(u), G_{n}(u')) \underset{u,u'}{\gec} n^{-1/ (4(m -m_0) +
        2)};\]
    \item \label{lan3} If the family $\left\{ f(\cdot, \theta ), \theta \in
          \Theta  \right\} $ satisfies Assumption~A with
        $\theta_0=\theta_{m_0}$, then there is a number $\Gamma >0$, a sequence $U(n) \to\infty$ and an infinite subset $\N_0$ of $\mathbb{N} $ along
      which the experiments 
\[\mathcal{E} _n = \left\{
        \bigotimes_{i=1}^nf\left(\cdot,G_n(u)\right), |u|
        \le U(n)\right\} \]  converge to the Gaussian shift
      experiment $\left\{ \mathcal{N} (u\Gamma, \Gamma), u \in \mathbb{R}
      \right\}$.
     \end{enumerate}
    \end{thm}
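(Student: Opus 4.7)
The plan is to first dispatch the geometric statements (a) and (b), then prove the LAN statement (c) by a direct Taylor expansion of the mixture density followed by a standard log-likelihood expansion. Setting $d = m - m_0 + 1$ and choosing once and for all the $\mu_k$'s as the moments of some reference $d$-atomic probability measure, Theorem~\ref{Lindsay} part (b) furnishes a unique probability distribution $\tilde G(u) = \sum_{j=1}^d \pi_j(u) \delta_{h_j(u)}$ on $d$ atoms with the prescribed first $2d-1$ moments $(\mu_1,\ldots,\mu_{2(m-m_0)}, u)$, and continuity of $u\mapsto (\pi_j(u), h_j(u))$ follows from continuity of the roots of the associated orthogonal polynomials. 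With $\eta_n = n^{-1/(4(m-m_0)+2)}$, the distribution $G_n(u)$ agrees with $G_0$ outside an $\eta_n\max_j|h_j(u)|$-neighbourhood of $\theta_{m_0}$, where it carries mass $\pi_{m_0}$; transporting this excess mass back to $\theta_{m_0}$ settles (a). For (b), writing $G_n(u) - G_n(u') = \pi_{m_0}[\nu_n(u) - \nu_n(u')]$ with $\nu_n(u) = \sum_j \pi_j(u)\delta_{\theta_{j,n}(u)}$, the rescaling $\theta \mapsto \theta_{m_0} + \eta_n\theta$ in the Kantorovich dual~\eqref{dualW} gives $W(G_n(u), G_n(u')) = \pi_{m_0}\eta_n W(\tilde G(u), \tilde G(u'))$, which is strictly positive for $u \ne u'$ since the $(2d-1)$-th moments of the $\tilde G$'s differ.

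For (c), I will Taylor-expand each atom $f(x, \theta_{j,n}(u))$ about $\theta_{m_0}$ to order $2d-1$ and sum over $j$ with weights $\pi_j(u)$. The moment matching makes the terms of order $p \le 2d-2$ equal the fixed constants $\mu_p$ and hence cancel between $G_n(u)$ and $G_n(0)$, while the $(2d-1)$-th order contributes $u\,\eta_n^{2d-1} = u\,n^{-1/2}$ times $f^{(2d-1)}(x,\theta_{m_0})/(2d-1)!$. The expected expansion is
\begin{equation}
\label{eq:planexpansion}
f(x, G_n(u)) - f(x, G_n(0)) = \frac{\pi_{m_0}\, u}{(2d-1)!\,\sqrt{n}}\, f^{(2d-1)}(x, \theta_{m_0}) + R_n(x,u),
\end{equation}
where the Taylor remainder $R_n$ is governed by $f^{(2d)}$. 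Differentiating $\int f(x,\theta)\dd\lambda \equiv 1$ at $\theta_{m_0}$ yields $\int f^{(2d-1)}(x,\theta_{m_0})\dd\lambda = 0$, so the natural score
\begin{equation*}
S_n(x) = \frac{\pi_{m_0}}{(2d-1)!}\,\frac{f^{(2d-1)}(x, \theta_{m_0})}{f(x, G_n(0))}
\end{equation*}
is centred under $\mathbb{P}_{G_n(0)}$; by Assumption~A together with continuity of $E_{2d-1,2}$, the limit $\Gamma := \lim_n \mathbb{E}_{G_n(0)}[S_n^2]$ exists and is strictly positive.

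Writing $g_n(x,u) = (f(x,G_n(u))-f(x,G_n(0)))/f(x,G_n(0))$ and applying $\log(1+y) = y - y^2/2 + O(y^3)$ componentwise, the log-likelihood ratio decomposes as
\begin{equation*}
Z_{n,0}(u) = \sum_{i=1}^n g_n(X_i,u) - \tfrac12\sum_{i=1}^n g_n(X_i,u)^2 + o_P(1).
\end{equation*}
Substituting~\eqref{eq:planexpansion}, the central limit theorem gives $n^{-1/2}\sum_i S_n(X_i) \xrightarrow[]{d} \mathcal{N}(0,\Gamma)$, so the first sum equals $u \cdot n^{-1/2}\sum_i S_n(X_i)$ up to a vanishing remainder, while the law of large numbers gives the quadratic sum $u^2\Gamma + o_P(1)$. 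This is the pointwise LAN expansion~\eqref{ELAN}. A uniform-in-$u$ version on each compact interval $[-U, U]$ follows by routine refinements, after which a diagonal extraction produces the infinite subsequence $\mathbb{N}_0$ together with the growing sequence $U(n) \to \infty$ required for convergence of experiments.

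The main technical obstacle is the quantitative control of the Taylor remainder $R_n$ and of the higher-order terms of the logarithm expansion, uniformly in $|u| \le U$: the two bounds one needs are $n\,\mathbb{E}_{G_n(0)}[R_n(\cdot,u)^2/f(\cdot,G_n(0))^2] \to 0$ and $n^2\,\mathbb{E}_{G_n(0)}[g_n(\cdot,u)^4] \to 0$. These are precisely the bounds that the full range of $(p,q)$-smoothness in Assumption~A is designed to supply: the $(2d,2)$-smoothness handles the Taylor remainder via $L^2$ bounds on $f^{(2d)}(\cdot,\theta)/f(\cdot,G_n(0))$ for $\theta$ near $\theta_{m_0}$, while the $q=4$ conditions feed into the fourth moments coming from the logarithm expansion. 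A secondary subtlety is that the reference measure $G_n(0)$ itself drifts towards $G_0$, so continuity of the functionals $E_{p,q}$ is essential both to identify the limit $\Gamma$ and to keep all remainders under uniform control as $n$ grows.
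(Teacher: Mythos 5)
Your construction and overall route are essentially the paper's: Lindsay's Hankel theorem gives the moment-matched $d$-atom distribution ($d=m-m_0+1$) with continuous dependence on $u$, the homothety of scale $n^{-1/(4d-2)}$ around $\theta_{m_0}$ yields (a) and (b) exactly as in the paper, and (c) is proved by the same Taylor expansion with score proportional to $f^{(2d-1)}(\cdot,\theta_{m_0})/f(\cdot,G_n(0))$, followed by the $\ln(1+y)$ expansion. Two points of your part (c), however, do not hold as stated. First, the bound $n^2\,\E_{G_n(0)}\bigl[g_n(\cdot,u)^4\bigr]\to 0$ that you present as one of the two things to be proved is false: since $g_n(\cdot,u)=\pi_{m_0}\bigl[u\,n^{-1/2}S_n+\text{smaller remainders}\bigr]$ and the fourth moment of the score is of order one (this is precisely what $(2d-1,4)$-smoothness and Proposition~\ref{tversmix} give), $n^2\,\E_{G_n(0)}[g_n^4]$ tends to a positive multiple of $u^4$, not to zero. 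What the argument actually needs is the weaker $n\,\E_{G_n(0)}|g_n|^3\to 0$ (cubic error of the logarithm) and $n\,\E_{G_n(0)}[g_n^4]\to 0$ (variance control in the law of large numbers for $\sum_i g_n(X_i,u)^2$); both hold, and they are what the paper establishes as \eqref{p2}--\eqref{p3}. So your strategy survives, but the stated target bound cannot be proved.

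Second, the existence of $\Gamma=\lim_n \E_{G_n(0)}[S_n^2]$ does not follow from continuity of $E_{2d-1,2}$: that functional is defined (and assumed continuous) only with a point mass $f(\cdot,\theta'')$ in the denominator, whereas here the denominator is the mixture $f(\cdot,G_n(0))$, whose atoms merge to $\theta_{m_0}$ as $n\to\infty$. Under Assumption~A one only gets the two-sided bound $\E_{G_n(0)}[S_n^2]\asymp 1$; the paper's Remark~\ref{remLAN} explicitly notes that identifying the limit would require stronger regularity. The correct (and easy) fix is the paper's: extract a subsequence along which the second moment converges — this is exactly why the statement only claims convergence of experiments along an infinite subset $\N_0$. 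Your diagonal extraction at the end could absorb this, but as written you define $\Gamma$ before any extraction and justify its existence by an argument that does not apply. A related simplification: with the paper's definition of LAN and convergence of experiments, the expansion \eqref{ELAN} is only needed pointwise in $u$, so your ``uniform-in-$u$ on compacts'' refinement is unnecessary; the sequence $U(n)\to\infty$ only has to grow slowly enough that $H(U(n))\,n^{-1/(4d-2)}\to 0$, which keeps the perturbed support points inside $\Theta$.
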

   
\begin{rem}
    \label{remLAN}
 We want only an example of this slow convergence, and it should be somewhat typical. That is why we have chosen the regularity conditions to make the proof easy, while still being easy to check, in particular for exponential families. 
    
    In particular, it could probably be possible to lower $q$ in
    $(p,q)$-smoothness to $2+\eps$ and still get the uniform bound we
    use in the law of large numbers below. Similarly, less derivability might be necessary if we tried to imitate differentiability in quadratic mean.
    
    In the opposite direction the variance $\Gamma $ in the limit
    experiment is really expected to be $\pi_{m_0}^2\mathbb{E}_{G_0}
    \left| \frac{f^{(2d-1)}(\cdot, \theta _{m_0}) }{f(\cdot, G_0)} \right|^2$ in most cases, but more stringent regularity conditions may be needed to prove it.
\end{rem}

Theorem \ref{LAN} and its proof show that when the first moments of the components of
    the mixing distribution $G$ near $\theta _{m_0}$ are known, all
    remaining knowledge we may acquire is on the next moment, and
    that's the ``right'' parameter: it is exactly as hard to make a
    difference between, say, $10$ and $11$ as between $0$ and $1$.

    On the other hand, for our original problem the cost function is the transportation distance between mixing distributions. So that an optimal estimator in mean square error for $u$ is not optimal for our original problem. Moreover just taking the loss function $  c(u_1,u_2)$ in the limit experiment runs into technical problems since this might go to zero as $u_2$ goes to infinity. They could be overcome, but it is easier to show how Theorem~\ref{LAN} entails Theorem~\ref{lower_bound} using just two points and contiguity \citep{LeCam1}.

\begin{proof}[Proof of  Theorem~\ref{lower_bound}]
Fix $u > 0$ and consider the densities $f_{n,u}=\otimes_{i=1}^nf\left(\cdot,G_n(u)\right)$  with associated probability
    measures $\P_{G_n(u)^{\otimes n}}$, which are simply denoted by $\P_{G_n(u)}$ below. We have 
    \begin{equation}
      \label{eq:liminf}
      \liminf_{n\to \infty}\inf_{A:\P_{G_n(0)}(A)\ge 3/4}\P_{G_n(u)}(A)\ge
\frac14\e^{-\frac{u^2}{2}\Gamma}.
    \end{equation}
Indeed, from Theorem~\ref{LAN}.\ref{lan3}. and the LAN property \eqref{ELAN}, if  $X$ is of density $f_{n,0}$ and $n$ ranging over $\N_0$, then   
\[\rho_n=\frac{f_{n,u}(X)}{f_{n,0}(X)}\e^{-uZ_n+\frac{u^2}{2}\Gamma}\xrightarrow[]{P}1\quad\text{where}\quad Z_n\xrightarrow{d}{}\mathcal{N}(0,\Gamma).\]
For any event $A$, 
\begin{equation*}
  \P_{G_n(u)} (A)  = \E_{G_n(0)} \left(\frac{f_{n,u}(X)}{f_{n,0}(X)}\1_A\right) =\e^{-\frac{u^2}{2}\Gamma}\, \E_{G_n(0)} \left(\rho_n\e^{uZ_n}\1_A\right).
\end{equation*}
Furthermore, by restriction on the event $\{Z_n>0\}$ and by using
$\rho_n \xrightarrow[]{P}1$, we get 
\begin{equation*}
 \E_{G_n(0)} \left(\rho_n\e^{uZ_n}\1_A\right)\ge\P_{G_n(0)} (A)-\P_{G_n(0)} (Z_n\le 0)+o(n).
\end{equation*}
Taking now the infimum on events $A$ such that $\P_{G_n(0)} (A)\ge 3/4$ and passing to the limit as $n\to \infty$ along $\N_0$, we obtain \eqref{eq:liminf}.

We now consider, for any sequence of estimators $\widehat{G}_n$, the event 
\[A =  \{ n^{1/ (4(m - m_0) + 2)} W(G_{n}(0), \widehat{G}_n) \ge  a\}\] 
for some $a>0$ to choose.  By Theorem~\ref{LAN}.\ref{lan2}., there is a
constant $c(u,0)>0$ such that $n^{1/ (4(m - m_0) + 2)}
W(G_n(u),G_{n}(0)) \ge c(u,0)$ so that  by the triangle's inequality,
\[A^c\subset  \{n^{1/ (4(m - m_0) + 2)} W(G_{n}(u), \widehat{G}_n) \ge 
c(u,0) -a\}.\]
Choose $a=c(u,0)/2$. Then either  $\P_{G_n(0)} (A) \ge  1/4  $, which gives 
\[\sup_{G_1 \in \{G_n(0)\}} n^{1/ (4(m
  - m_0) + 2)}\mathbb{E}_{G_1} W(G_1, \widehat{G}_n)
\ge \frac{a}{4},\]
 or, by \eqref{eq:liminf}, $\P_{G_n(u)} (A^c) \ge    \e^{-\frac{u^2}{2}\Gamma}/4$ in
the limit  so that 
\[ \liminf_{n\to \infty} \sup_{G_1 \in \{G_n(u)\}} n^{1/ (4(m
  - m_0) + 2)}\mathbb{E}_{G_1} W(G_1, \widehat{G}_n)
\ge \frac{a}{4}\e^{-\frac{u^2}{2}\Gamma}.\]
Thus,  gathering the two inequalities, we get
\[ \liminf_{n\to \infty} \sup_{G_1 \in \{G_n(0), G_n(u)\}} n^{1/ (4(m
  - m_0) + 2)}\mathbb{E}_{G_1} W(G_1, \widehat{G}_n)
\ge \frac{a}{4}\e^{-\frac{u^2}{2}\Gamma}.\]
Note to finish that by Theorem~\ref{LAN}.\ref{lan2}., each $G_n(0)$ or
$G_n(u)$ is at Wasserstein distance at most  $n^{-1/ (4(m - m_0) + 2)+\eps}$
from $G_0$, for large $n$ enough. Theorem~\ref{lower_bound} is thus established.
\end{proof}

\subsection{Comparison between distances and upper bounds on convergence rates}
\label{sec:upperbound}

All our upper bounds on convergence rates come from the properties of the minimum distance estimator~\eqref{gn}. Now, by the triangle's inequality, if the $n$-sample comes from $F(\cdot, G_1)$ with $G_1 \in \Glm$, then 
\begin{align}
      \label{fois2}
\|F(\cdot,\widehat{G}_n)-F(\cdot,G_1)\|_\infty\le 2\|F(\cdot,G_1)-F_n\|_\infty.
  \end{align}
  Hence, the following lemma allows us to get bounds on rates whenever  we can control (a power of) the transportation distance between mixing distributions by the $L^{\infty}$-distance between mixtures.
\begin{lem}
\label{lemDKW}
  Let $d\ge 1$. Assume that  the optimal estimators $\widehat{G}_n$ in Theorem~\ref{main} satisfy for some constant $C>0$ and on some  event $A$,
\[W(\widehat{G}_n,G_1)^d\le C\|F_n-F(\cdot, G_1)\|_\infty.\]
Then 
\[\E_{G_1}W(\widehat{G}_n,G_1)\le \left(\frac{\pi C^2}{2}\right)^{1/2d}n^{-1/2d}+\Diam(\Theta)\P_{G_1}(A^c).\]
\end{lem}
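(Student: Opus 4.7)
The approach is to split the expectation according to whether the good event $A$ holds, use the hypothesis to get a pointwise bound in terms of the Kolmogorov distance $\|F_n - F(\cdot,G_1)\|_\infty$ on $A$, and then use the trivial bound $W(\widehat{G}_n,G_1) \leq \Diam(\Theta)$ on $A^c$. This yields
\begin{equation*}
  \E_{G_1} W(\widehat{G}_n,G_1) \leq C^{1/d}\, \E_{G_1}\!\left[\|F_n - F(\cdot,G_1)\|_\infty^{1/d}\,\1_A\right] + \Diam(\Theta)\,\P_{G_1}(A^c).
\end{equation*}
It then suffices to bound the first expectation by $(\pi/(2n))^{1/(2d)}$.

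For this, since $d \geq 1$ the map $x \mapsto x^{1/d}$ is concave on $[0,\infty)$, so Jensen's inequality gives
\begin{equation*}
\E_{G_1}\!\left[\|F_n - F(\cdot,G_1)\|_\infty^{1/d}\right] \leq \left(\E_{G_1}\|F_n - F(\cdot,G_1)\|_\infty\right)^{1/d}.
\end{equation*}
The last factor is then bounded using the Dvoretzky-Kiefer-Wolfowitz inequality in Massart's sharp form: $\P(\sqrt{n}\|F_n - F(\cdot,G_1)\|_\infty > t) \leq 2\e^{-2t^2}$. Integrating the tail in the usual way (split the integral at $t_0 = \sqrt{(\ln 2)/2}$ where $2\e^{-2t_0^2}=1$) yields $\E_{G_1}\|F_n - F(\cdot,G_1)\|_\infty \leq \sqrt{\pi/(2n)}$; one can equivalently bound $\E\|F_n - F\|_\infty^2 \leq \pi/(2n)$ and apply Jensen to $x \mapsto x^{1/(2d)}$. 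Either route gives
\begin{equation*}
  C^{1/d}\,\E_{G_1}\!\left[\|F_n - F(\cdot,G_1)\|_\infty^{1/d}\right] \leq C^{1/d}\left(\frac{\pi}{2n}\right)^{\!1/(2d)} = \left(\frac{\pi C^2}{2}\right)^{\!1/(2d)} n^{-1/(2d)},
\end{equation*}
which combined with the $A^c$-term gives the stated inequality.

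The computation is routine; the only mildly delicate point is getting the constant $\sqrt{\pi/2}$ in the tail integral from DKW, which forces one to compare the optimal cut-off $\sqrt{(\ln 2)/2}$ against $\sqrt{\pi}/2$ and check that the resulting bound on $\E\|F_n-F\|_\infty$ is indeed dominated by $\sqrt{\pi/(2n)}$. Nothing else in the argument goes beyond Jensen's inequality and the trivial bound $W \leq \Diam(\Theta)$.
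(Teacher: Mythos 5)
Your proof is correct and follows essentially the same route as the paper: split the expectation on $A$ and $A^c$, bound $W$ by $\Diam(\Theta)$ off $A$, apply Jensen's inequality to the concave map $x\mapsto x^{1/d}$, and control $\E_{G_1}\|F_n-F(\cdot,G_1)\|_\infty$ via the Dvoretzky--Kiefer--Wolfowitz tail bound. The only cosmetic difference is that no cut-off is needed in the tail integration, since $\int_0^\infty 2\e^{-2nz^2}\,\dd z=\sqrt{\pi/2}\,n^{-1/2}$ exactly, which is how the paper obtains the constant.
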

\begin{proof}
  By assumption, we can bound $W(\widehat{G}_n,G_1)$ on $A$ and we can also always bound $W(\widehat{G}_n,G_1)$ by $\Diam(\Theta)$, in particular on $A^c$, so that using Jensen inequality,
  \[\E_{G_1}W(\widehat{G}_n,G_1)\le  C^{1/d}\left[\E_{G_1}\|F_n-F(\cdot, G_1)\|_\infty\right]^{1/d} +\Diam(\Theta)\P_{G_1}(A^c). \]
Now, the Dvoretzky-Kiefer-Wolfowitz inequality \citep{Ma} asserts that for any $z > 0$, 
\begin{align}
    \label{DKW}
    \mathbb{P}_{G_1}\left(\|F(\cdot,G_1)-F_n\|_\infty > z \right) \le2 \e^{-2n z ^2},
\end{align}
and consequently,
\[\E_{G_1}\|F_n-F(\cdot, G_1)\|_\infty\le \int_0^\infty 2 \e^{-2n z ^2}\dd z=\sqrt{\frac{\pi}{2}}n^{-1/2}.\]
The proof is complete.
\end{proof}

The following theorem is the key technical tool for the proof of Theorem~\ref{main}. We describe the main and novel ingredient in its proof, a coarse-graining tree, in Section~\ref{tree}.
\begin{thm}
    \label{orders}
   Let $G_0\in\mathcal{G}_{m_0}$. Under Assumption~B(2m),
   \begin{enumerate}[a.]
   \item  \label{local}   There are $\eps >0$ and $\delta>0 $ such that 
\begin{align*}
    \inf_{\substack{G_1,G_2\in\Glm \cap \mathcal{W}_{G_0}(\eps)\\G_1\ne G_2}}   \frac{\left\lVert F(\cdot, G_1) - F(\cdot, G_2) \right\rVert _\infty}{W(G_1, G_2)^{2m - 2m_0 + 1}} > \delta.
\end{align*}
\item \label{global} There exists $\delta > 0$ such that
\begin{equation*}
  \inf_{\substack{G_1,G_2\in\Glm\\G_1\ne G_2}} \frac{\left\lVert F(\cdot, G_1) - F(\cdot, G_2) \right\rVert _\infty}{W(G_1, G_2)^{2m-1}}  > \delta. 
\end{equation*}
   \end{enumerate}
\end{thm}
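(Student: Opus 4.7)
I would prove both parts by contradiction, starting from a sequence of distinct pairs $(G_{1,n}, G_{2,n}) \in \Glm \times \Glm$ (with both in $\mathcal{W}_{G_0}(\eps)$ for part (a)) along which the ratio in the statement tends to zero. Since $(\Glm, W)$ is compact and the denominator is bounded by $\Diam(\Theta)^{2m-1}$, the numerator $\|F(\cdot, G_{1,n}) - F(\cdot, G_{2,n})\|_\infty$ must tend to zero; extracting subsequences and invoking the identifiability of the mixture family (a consequence of $k$-strong identifiability in Assumption~B($2m$)), both sequences converge in $W$ to a common limit $G^* \in \Glm$, so that $W(G_{1,n}, G_{2,n}) \to 0$. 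For part (a), choosing $\eps$ small enough forces the support of $G^*$ to cluster around that of $G_0$; the worst case for the exponent is $G^* = G_0$, which carries exactly $m_0$ atoms.

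\textbf{Coarse-graining tree.} The technical heart is to organise the combined support of $G_{1,n}$ and $G_{2,n}$ into a rooted tree $\T_n$ that records how the atoms merge at various rates. The root collects all atoms with signed weights $\alpha_{j,n}$ read from $G_{1,n} - G_{2,n}$; the children of a node $\nu$ are the maximal sub-clusters of $\nu$ merging at a rate strictly faster than the scale $h_{\nu,n}\to 0$ of $\nu$ itself. After extracting subsequences we may assume the combinatorial shape of $\T_n$ is constant, with node centres $\theta_{\nu,n}$ and scales $h_{\nu,n}$. At each node $\nu$ with $k_\nu$ atoms in its cluster, a Taylor expansion at $\theta_{\nu,n}$ of order $2k_\nu - 1$ gives
\begin{equation*}
    \sum_{j \in \nu} \alpha_{j,n}\, F(\cdot, \theta_{j,n}) \;=\; \sum_{p=0}^{2k_\nu - 1} \frac{F^{(p)}(\cdot, \theta_{\nu,n})}{p!}\, M_{\nu,p,n} \;+\; R_{\nu,n},
\end{equation*}
where $M_{\nu,p,n} = \sum_{j \in \nu} \alpha_{j,n}(\theta_{j,n} - \theta_{\nu,n})^p$ and the remainder $R_{\nu,n}$ is controlled uniformly in $x$ via the continuity modulus $\omega$ of Assumption~B($2m$).

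\textbf{Recursion.} The crux, which should be Lemma~\ref{lemrec}, is a recursion processing $\T$ from the leaves upwards. At each internal node $\nu$, one renormalises the atoms by the appropriate power of $h_{\nu,n}$, and one of two alternatives occurs: either (i) a Taylor coefficient of order at most $2k_\nu - 1$ remains bounded away from zero, in which case Proposition~\ref{infimum} together with $k$-strong identifiability forces $\|F(\cdot, G_{1,n}) - F(\cdot, G_{2,n})\|_\infty \gec h_{\nu,n}^{p}$ for a precise exponent $p$; or (ii) enough low-order moments $M_{\nu, p, n}$ vanish to sufficient order and the argument is passed to the children of $\nu$ at the finer scale. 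Counting derivatives used along a root-to-leaf path yields at most $2m - 1$ globally (since the combined supports contain at most $2m$ atoms), and at most $2(m - m_0) + 1$ locally, because in part (a) the root has $m_0$ children and each subtree consumes at most $2(m - m_0)$ further derivatives.

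\textbf{Main obstacle.} The hardest part will be the bookkeeping in the recursion: one must simultaneously track moment-vanishing conditions between siblings (to rule out cancellations that would depress the numerator below the sought bound) and keep the total number of derivatives invoked within the $2m$ allowed by Assumption~B($2m$). The tree structure is precisely what makes this accounting tractable, and a Lindsay-type use of Hankel matrices, in the spirit of Theorem~\ref{Lindsay}, is what guarantees that at each stage a genuinely non-trivial Taylor coefficient must emerge rather than an unfortunate zero. Once Lemma~\ref{lemrec} is established, both (a) and (b) follow by specialising the exponent and deriving a contradiction with the assumption that the ratio tends to zero.
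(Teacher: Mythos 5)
Your scaffolding matches the paper: minimising sequences and compactness, a coarse-graining tree recording the rates at which atoms of $G_{1,n}-G_{2,n}$ merge, Taylor expansions along the tree, Proposition~\ref{infimum} (strong identifiability) to turn a surviving Taylor coefficient into a lower bound on $\|F(\cdot,G_{1,n})-F(\cdot,G_{2,n})\|_\infty$, and a combinatorial count of the resulting exponent ($d_\star-1\le 2m-2m_0+1$ because at least $m_0$ children of the root must contain $\ge 2$ atoms). Part~(b) is then deduced from part~(a).

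However there is a genuine gap at the crux of the argument. You invoke ``a Lindsay-type use of Hankel matrices, in the spirit of Theorem~\ref{Lindsay}'' to guarantee that a non-trivial Taylor coefficient survives at each node. Lindsay's theorem is a statement about moments of \emph{probability} (hence nonnegative) measures, and in this paper it is used only in the construction of the LAN family for the \emph{lower} bound, where one builds measures with prescribed moments. At the nodes of your tree the local signed measure $\sum_{j\in\nu}\alpha_{j,n}\delta_{\theta_{j,n}}$ has weights of both signs (since it comes from $G_{1,n}-G_{2,n}$), so no Hankel positivity argument applies and there is no reason its moments are controlled by Lindsay-type determinants. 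What the paper actually proves in Lemma~\ref{lemrec}(b) is an unconditional structural fact: for every node $J$, among the first $d_J$ coefficients there is one of maximal order, i.e.\ $\|a_J\|\asymp|a_J(k_J)|$ for some $k_J<d_J$. This is established bottom-up via a confluent-Vandermonde rank argument (Lemma~\ref{indep} and Corollary~\ref{memenorme} in the supplement): the renormalised child centres are $O(1)$-separated, so the matrix mapping child coefficient vectors to parent coefficients has full rank, and the dominant coefficient propagates. Your sketch replaces this with a dichotomy (``either a coefficient survives or moments vanish and one passes to children''), but no such case split is needed or available; the paper's recursion establishes the dominant-coefficient property uniformly, and the only dichotomy appears once at the root (whether its diameter shrinks or stays of order one). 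Finally, the Taylor order $2k_\nu-1$ you propose overshoots the available regularity: at the root $k_\nu=m_1+m_2$ can be as large as $2m$, so $2k_\nu-1$ can reach $4m-1$ while Assumption~B$(2m)$ only gives $F^{(2m)}$. The paper instead expands to the fixed order $2m$ everywhere and locates the dominant coefficient among indices $<d_J\le 2m$.
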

\begin{proof}[Proof of Theorem~\ref{main}]
Let $\eps >0$ as in Theorem~\ref{orders}.\ref{local} and set 
\[ z_\eps  = \sup_{G_1} \inf_{G_2} \|F(\cdot,G_1)-F(\cdot,G_2)\|_\infty  \]
where the supremum is taken over all $G_1\in \Glm\cap \mathcal{W}_{G_0}(\eps/2)$ and the infimum is taken over all $G_2\in \Glm\setminus \mathcal{W}_{G_0}(\eps)$. By compactness of $\Glm\setminus \mathcal{W}_{G_0}(\eps)$ the infimum is attained and by identifiability (coming from Assumption~B(2m)), it is nonzero. Thus, a fortiori, we have $z_\eps>0$.

Set $A_\eps=\{\|F(\cdot,G_1)-F_n\|_\infty < z_\eps /4\}$ ; on $A_\eps$, by  inequality \eqref{fois2} for the minimum distance estimator $\widehat{G} _n$,  we see that $\|F(\cdot,G_1)-F(\cdot, \widehat{G} _n)\|_\infty < z_\eps /2$. Hence, if $G_1\in \Glm\cap \mathcal{W}_{G_0}(\eps/2)$, then 
  $\widehat{G}_n$ is in $\mathcal{W} _{G_0}(\eps)$ and we may use Theorem~\ref{orders}.\ref{local}:
\begin{equation*}
  W(\widehat{G}_n,G_1)^{2m-2m_0+1} < \frac2\delta\, \|F_n-F(\cdot,G_1)\|_\infty .
\end{equation*}
Applying Lemma~\ref{lemDKW} with $A=A_\eps$, $C=2/\delta$ and $d=2m-2m_0+1$ together with \eqref{DKW} with $z=z_\eps/2$ yields 
\[\E_{G_1}W(\widehat{G}_n,G_1)\underset{\delta,d}{\lec} n^{-1/2d}\]
and bound~\eqref{main1} is proved. 

Applying the same Lemma~\ref{lemDKW} with $d=2m-1$  and Theorem~\ref{orders}.\ref{global}
likewise yields bound~\eqref{main2}.

\end{proof}

We now give two related results under weaker derivability assumptions, but less general. The first is the valid weaker version of Lemma~2 by \citet{Chen}, which is sufficient for the use other authors have made of it. Here, we only compare mixtures in a ball with the mixture at the center of the ball. The second covers the case where the number of components in the mixture is known, and is used for the proof of Theorem~\ref{choose_and_estimate}.
\begin{thm}
    \label{weak_Chen}
     Let $G_0\in \Glm$. Under Assumption~B(2),
 there are $\eps >0$ and $\delta>0 $ such that 
\begin{align*}
    \inf_{G_1\in\Glm \cap \mathcal{W}_{G_0}(\eps)}   \frac{\left\lVert F(\cdot, G_1) - F(\cdot, G_0) \right\rVert _\infty}{W(G_1, G_0)^{2}} > \delta.
\end{align*}
\end{thm}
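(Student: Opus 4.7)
The plan is to argue by contradiction using a second-order Taylor expansion of $F(\cdot, G_{1,n})$ around each support point of $G_0$, combined with $2$-strong identifiability via Proposition~\ref{infimum}. Suppose there exists a sequence $(G_{1,n}) \subset \Glm$ with $\eta_n := W(G_{1,n}, G_0) \to 0$ and $\|F(\cdot, G_{1,n}) - F(\cdot, G_0)\|_\infty = o(\eta_n^2)$. Write $G_0 = \sum_{j=1}^{m_0} \pi_j \delta_{\theta_j}$ with distinct $\theta_j$, and $G_{1,n} = \sum_{l=1}^{m} q_{l,n} \delta_{\phi_{l,n}}$ (padding with zero weights if needed). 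By compactness of $\Theta$ and the bounded component count, a subsequence yields $\phi_{l,n} \to \phi^*_l$ and $q_{l,n} \to q^*_l$. Weak convergence forces $\sum_{l : \phi^*_l = \theta_j} q^*_l = \pi_j$ for each $j$, and $q^*_l = 0$ whenever $\phi^*_l \notin \{\theta_1, \ldots, \theta_{m_0}\}$. Partition the indices into $J_j = \{l : \phi^*_l = \theta_j\}$ for $j \in \lb 1, m_0 \rb$ (\emph{regular}) and $J_0 = \{l : \phi^*_l \notin \{\theta_j\}\}$ (\emph{phantom}, with vanishing weights).

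For each regular $l \in J_j$, set $h_{l,n} = \phi_{l,n} - \theta_j$ and Taylor-expand
\[
F(\cdot, \phi_{l,n}) = F(\cdot, \theta_j) + h_{l,n} F'(\cdot, \theta_j) + \tfrac{1}{2} h_{l,n}^{2} F''(\cdot, \theta_j) + r_{l,n},
\]
with $\|r_{l,n}\|_\infty \leq \tfrac{1}{2} h_{l,n}^2 \omega(|h_{l,n}|)$ by Assumption~B$(2)$. For phantom $l \in J_0$, the same regularity gives $\|F(\cdot, \phi_{l,n}) - F(\cdot, \phi^*_l)\|_\infty = o(1)$. Regrouping around each $\theta_j$ produces
\[
F(\cdot, G_{1,n}) - F(\cdot, G_0) = \sum_{j=1}^{m_0} \sum_{p=0}^{2} \alpha_{p,j,n}\, F^{(p)}(\cdot, \theta_j) + \sum_{l \in J_0} q_{l,n}\, F(\cdot, \phi^*_l) + R_n,
\]
with $\alpha_{0,j,n} = \sum_{l \in J_j} q_{l,n} - \pi_j$, $\alpha_{1,j,n} = \sum_{l \in J_j} q_{l,n} h_{l,n}$, $\alpha_{2,j,n} = \tfrac{1}{2}\sum_{l \in J_j} q_{l,n} h_{l,n}^{2}$, and $\|R_n\|_\infty = o\bigl(\sum_j \alpha_{2,j,n}\bigr) + o(p_{0,n})$, where $p_{0,n} := \sum_{l \in J_0} q_{l,n}$.

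Since the collection $\{\theta_j\}_j \cup \{\phi^*_l\}_{l \in J_0}$ is pairwise $\varepsilon_0$-separated for some $\varepsilon_0>0$, Proposition~\ref{infimum} with $k=2$ bounds the main (non-remainder) term from below by a positive constant (depending on $\varepsilon_0$) times the coefficient norm $\|(\alpha_n, q_n)\|$. It then remains to establish $\eta_n^{2} \lec \|(\alpha_n, q_n)\|$. For this I use an explicit transport plan: move each regular mass $q_{l,n}$ at $\phi_{l,n}$ to $\theta_{j(l)}$ at cost $q_{l,n}|h_{l,n}|$, ship the phantom mass to an arbitrary $\theta_j$ at cost at most $\Diam \Theta$ per unit, and redistribute the resulting mass imbalances (of total size at most $\sum_j |\alpha_{0,j,n}| + p_{0,n}$) also at cost at most $\Diam \Theta$ per unit. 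This yields $\eta_n \lec \sum_{l} q_{l,n}|h_{l,n}| + \sum_j |\alpha_{0,j,n}| + p_{0,n}$, and the Cauchy--Schwarz bound $\bigl(\sum_l q_{l,n}|h_{l,n}|\bigr)^2 \le \sum_l q_{l,n} h_{l,n}^2 = 2\sum_j \alpha_{2,j,n}$ delivers $\eta_n^{2} \lec \|(\alpha_n, q_n)\|$ (all coefficients being bounded uniformly and the component counts being finite).

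Combining these three estimates produces $\|F(\cdot, G_{1,n}) - F(\cdot, G_0)\|_\infty \gec \|(\alpha_n, q_n)\|\bigl(1 - o(1)\bigr) \gec \eta_n^{2}$, the desired contradiction. The main obstacle is the step bounding $\eta_n^{2}$ by the coefficient norm: the quadratic squaring introduced by Cauchy--Schwarz is exactly what converts the first-order $\eta_n$-bound on local perturbations into the second-order $\alpha_{2,j,n}$-quantity appearing in the identifiability lower bound, and one must simultaneously accommodate the contributions of phantom mass and mass imbalances. The fact that $G_0$ is fixed with finitely many distinct support points is essential here: it rules out the delicate collapse of two distinct components of two different mixtures onto the same limit (the source of Chen's error), so that only components of $G_{1,n}$ may fuse and a second-order expansion suffices to capture the resulting cancellations.
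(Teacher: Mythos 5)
Your argument is correct and is essentially the proof the paper adopts by reference: it is Chen's Lemma~2 argument carried out against the fixed ball center $G_0$, with the nonnegativity of your second-order coefficients $\alpha_{2,j,n}$ and of the phantom weights playing exactly the role of the paper's remark that the $\gamma_j$ in Chen's proof are nonnegative and not all zero. Two small points to tidy: distinct phantom components may converge to the \emph{same} limit, so the collection $\{\theta_j\}_j\cup\{\phi^*_l\}_{l\in J_0}$ need not be pairwise separated — merge phantom components sharing a limit (their nonnegative weights simply add) before invoking Proposition~\ref{infimum}; and the $o(p_{0,n})$ bound on the phantom part of $R_n$ needs a word on why $\sup_x|F(x,\phi_{l,n})-F(x,\phi^*_l)|\to 0$, e.g.\ via uniform boundedness of $F^{(1)}$ in $(x,\theta)$, which follows from Assumption~B(2) together with $0\le F\le 1$ on the compact $\Theta$. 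Neither point affects the structure of the proof.
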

\begin{proof}
We can follow the proof of \citet[Lemma 2]{Chen} which holds here, because the $\gamma _j$ defined in his paper are all non-negative, and at least one is nonzero. 
\end{proof}

\begin{thm}
    \label{equal_compo} Let $G_0\in\Gm{}$. Under Assumption~B(1), 
 there are $\eps >0$ and $\delta>0 $ such that 
\begin{align*}
    \inf_{\substack{G_1,G_2\in\Glm \cap \mathcal{W}_{G_0}(\eps)\\G_1\ne G_2}}   \frac{\left\lVert F(\cdot, G_1) - F(\cdot, G_2) \right\rVert _\infty}{W(G_1, G_2)} > \delta.
\end{align*}
\end{thm}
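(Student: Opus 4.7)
The plan is to argue by contradiction and reduce to a finite-dimensional linear independence statement that follows from $1$-strong identifiability, much as in the proof of \citet[Lemma~2]{Chen} but in the simplified regime where the number of components is already pinned down. Write $G_0 = \sum_{j=1}^m \pi_j \delta_{\theta_j}$ with the $\theta_j$ pairwise distinct and the $\pi_j$ strictly positive, and set $\delta_0 = \min_{j\ne j'}|\theta_j - \theta_{j'}|$ and $\pi_{\min} = \min_j \pi_j$.

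\medskip

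\noindent\textbf{Step 1: exactly $m$ components near $G_0$.} First I would choose $\varepsilon$ so small that every $G \in \Glm \cap \mathcal{W}_{G_0}(\varepsilon)$ is automatically in $\mathcal{G}_m$ with a canonical labelling. The point is that if $G \in \mathcal{G}_{<m}$, then by pigeonhole at least one $\theta_j$ must transport its full mass $\pi_j \ge \pi_{\min}$ over a distance at least $\delta_0/3$, giving $W(G,G_0) \gec \pi_{\min}\delta_0/3$. Hence, for $\varepsilon < \pi_{\min}\delta_0/3$, any $G \in \Glm \cap \mathcal{W}_{G_0}(\varepsilon)$ has exactly $m$ support points, one $\varepsilon$-close to each $\theta_j$ (for $\varepsilon$ small enough compared with $\delta_0$), with weight $\varepsilon$-close to $\pi_j$. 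I will write $G_i = \sum_j \pi_j^{(i)} \delta_{\theta_j^{(i)}}$ for $i=1,2$ in this labelling, and set $\Delta\pi_j = \pi_j^{(1)} - \pi_j^{(2)}$, $\Delta\theta_j = \theta_j^{(1)} - \theta_j^{(2)}$, $h_j^{(i)} = \theta_j^{(i)} - \theta_j$.

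\medskip

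\noindent\textbf{Step 2: comparing $W(G_1,G_2)$ with $(\Delta\pi_j,\Delta\theta_j)$.} I will show that, when $\varepsilon$ is small enough,
\[
   W(G_1,G_2) \underset{G_0}{\ap} \sum_{j=1}^m \bigl(|\Delta\pi_j| + |\Delta\theta_j|\bigr).
\]
The upper bound is obtained from an explicit coupling: transport $\min(\pi_j^{(1)},\pi_j^{(2)})$ mass from $\theta_j^{(1)}$ to $\theta_j^{(2)}$ at cost $|\Delta\theta_j|$, then redistribute the signed residuals $\Delta\pi_j$ between the $\theta_j^{(2)}$'s, which are uniformly bounded in $\Theta$ so cost at most $\Diam\Theta\cdot\sum_j |\Delta\pi_j|$. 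The lower bound follows from the Kantorovich dual \eqref{dualW}: testing against a Lipschitz function that takes prescribed distinct values $c_j$ at the $\theta_j$'s (extended linearly) isolates the $\Delta\pi_j$ contribution, while testing against $(\theta-\theta_j)\mathbf{1}_{\text{near }\theta_j}$ isolates each $\pi_j \Delta\theta_j$ (the details use that the $\theta_j$ are $\delta_0$-separated).

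\medskip

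\noindent\textbf{Step 3: first-order expansion of $F(\cdot,G_1)-F(\cdot,G_2)$.} Using Assumption~B(1), $F(\cdot,\theta)$ is differentiable in $\theta$ with $F^{(1)}(\cdot,\theta)$ satisfying a uniform continuity modulus $\omega$. Thus
\[
   F(\cdot,\theta_j^{(i)}) = F(\cdot,\theta_j) + h_j^{(i)} F^{(1)}(\cdot,\theta_j) + R_j^{(i)},
   \qquad \|R_j^{(i)}\|_\infty \le |h_j^{(i)}|\,\omega(|h_j^{(i)}|).
\]
Inserting this and grouping terms yields
\[
   F(\cdot,G_1) - F(\cdot,G_2) = \sum_{j=1}^m \Delta\pi_j\, F(\cdot,\theta_j) + \sum_{j=1}^m \pi_j^{(2)}\Delta\theta_j\, F^{(1)}(\cdot,\theta_j) + \mathcal{R},
\]
where $\mathcal{R}$ collects the Taylor remainders and the cross term $\sum_j \Delta\pi_j\,h_j^{(1)} F^{(1)}(\cdot,\theta_j)$; both are $o(\sum_j(|\Delta\pi_j| + |\Delta\theta_j|))$ as $\varepsilon\to 0$.

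\medskip

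\noindent\textbf{Step 4: apply $1$-strong identifiability.} The fixed support points $\theta_1,\dots,\theta_m$ are $\delta_0$-separated, so Proposition~\ref{infimum} with $k=1$ gives a constant $c>0$ such that, for all coefficients,
\[
   \Bigl\|\sum_{j=1}^m \alpha_j F(\cdot,\theta_j) + \beta_j F^{(1)}(\cdot,\theta_j)\Bigr\|_\infty \ge c\,\bigl(\textstyle\sum_j |\alpha_j| + |\beta_j|\bigr).
\]
Applied with $\alpha_j = \Delta\pi_j$ and $\beta_j = \pi_j^{(2)}\Delta\theta_j \gec \pi_{\min}|\Delta\theta_j|/2$, and absorbing $\mathcal{R}$ for $\varepsilon$ small, I obtain
\[
   \|F(\cdot,G_1)-F(\cdot,G_2)\|_\infty \underset{G_0}{\gec} \sum_j(|\Delta\pi_j| + |\Delta\theta_j|) \underset{G_0}{\gec} W(G_1,G_2),
\]
which is the sought inequality after shrinking $\varepsilon$ once more so that the remainder is absorbed by half of the main term.

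\medskip

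\noindent\textbf{Main obstacle.} The delicate step is Step~2: turning Wasserstein distance into the intrinsic Euclidean quantity $\sum_j(|\Delta\pi_j|+|\Delta\theta_j|)$. The upper bound is immediate; the lower bound requires producing Lipschitz test functions tailored to each $\theta_j$ to tease apart simultaneously the mass and position perturbations, and depends on constants that blow up as $G_0$ degenerates but are harmless here because $G_0 \in \mathcal{G}_m$ is fixed with $\pi_{\min}>0$ and $\delta_0>0$. Everything else is a straightforward consequence of Assumption~B(1) plus Proposition~\ref{infimum}.
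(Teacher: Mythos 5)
Your overall strategy is the same as the paper's: a structural lemma showing that every $G$ in a small Wasserstein ball around $G_0$ has exactly $m$ components with weights bounded below and a canonical labelling, a first-order Taylor expansion under Assumption~B(1), the $1$-strong identifiability lower bound of Proposition~\ref{infimum}, and the elementary bound $W(G_1,G_2)\lec \sum_j\bigl(|\Delta\pi_j|+|\Delta\theta_j|\bigr)$. Two remarks before the main point. First, the lower bound in your Step~2, which you single out as the main obstacle, is never used: the final chain only needs $W\lec\sum_j(|\Delta\pi_j|+|\Delta\theta_j|)$, the direction you yourself call immediate. Second, your argument is uniform in $(G_1,G_2)$, which lets you dispense with the minimizing-sequence and compactness extraction the paper uses; that is a legitimate simplification, since the estimates are uniform thanks to the uniform modulus $\omega$ in Assumption~B(1).

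The genuine issue is the remainder control in Step~3. You bound each Taylor remainder by $\|R_j^{(i)}\|_\infty\le |h_j^{(i)}|\,\omega(|h_j^{(i)}|)$ and then assert that $\mathcal{R}=o\bigl(\sum_j(|\Delta\pi_j|+|\Delta\theta_j|)\bigr)$. This does not follow from those individual bounds: $|h_j^{(i)}|$ is only small compared with $\eps$, not with $|\Delta\theta_j|+|\Delta\pi_j|$, so when $G_1$ and $G_2$ are both at distance of order $\eps$ from $G_0$ but extremely close to each other, the individual remainders can dwarf the main term and the claimed inequality in Step~4 would be vacuous. The conclusion is nevertheless true because the two remainders cancel to first order: writing $\pi_j^{(1)}R_j^{(1)}-\pi_j^{(2)}R_j^{(2)}=\pi_j^{(2)}\bigl(R_j^{(1)}-R_j^{(2)}\bigr)+\Delta\pi_j\,R_j^{(1)}$ and $R_j^{(1)}-R_j^{(2)}=\int_{\theta_j^{(2)}}^{\theta_j^{(1)}}\bigl[F^{(1)}(\cdot,t)-F^{(1)}(\cdot,\theta_j)\bigr]\dd t$, you get $\|R_j^{(1)}-R_j^{(2)}\|_\infty\le|\Delta\theta_j|\,\omega\bigl(\max_i|h_j^{(i)}|\bigr)=o(|\Delta\theta_j|)$ and $\|\Delta\pi_j\,R_j^{(1)}\|_\infty=o(|\Delta\pi_j|)$, which is exactly what Step~4 needs. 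The paper sidesteps this by Taylor-expanding $F(x,\theta_{j,1,n})$ directly around $\theta_{j,2,n}$ and applying Proposition~\ref{infimum} at the points $\theta_{j,2,n}$ (eventually separated), so its single remainder is automatically $o(|\theta_{j,1,n}-\theta_{j,2,n}|)$. With this one repair your argument is complete.
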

 The proof is given in the supplemental
part \citep{Supp}.

\begin{proof}[Proof of Theorem~\ref{choose_and_estimate}]
 Consider a fixed mixing distribution $G_0$ with exactly $m_0$ components. Set 
\begin{eqnarray*}
 \eps'&=&\inf_{G_1 \in \mathcal{G} _{< m_0}} \left\|F(\cdot, G_1) - F(\cdot, G_0) \right\|_{\infty} \\
\eps'' &=&\inf_{\substack{G_1 \in \Glmo \\ W(G_1,G_0)\ge \eps}} \left\| F(\cdot, G_1) - F(\cdot, G_0) \right\| _{\infty}.
\end{eqnarray*}
By compactness and identifiability, $\eps'$  and $\eps''$ are attained and positive. Let the event $A_n=\{\|F(\cdot,G_0)-F_n\|_{\infty} \le z_n\}$ with
\[z_n=\frac14\left[n^{-1/2+\kappa}\wedge(\eps'-n^{-1/2+\kappa})\wedge \eps''\right].\]
We first bound $W(\widehat{G}_n,G_0)$ on the event $A_n$: we have, by definition~\eqref{gn} of the minimum distance estimator $\widehat{G}_{n,m_0} $ on $\Glmo$,
\[ \| F(\cdot, \widehat{G}_{n,m_0}) - F_n \|_{\infty} \le \| F(\cdot, G_0) - F_n \|_{\infty} \le z_n \le n^{-1/2+\kappa},\]
so that $\hat{m}$ is at most $m_0$ ; moreover by the triangle's inequality, for all $G_1\in \mathcal{G}_{< m_0}$, 
\[\| F(\cdot, G_1) - F_n \|_{\infty}\ge \eps'-z_n> n^{-1/2+\kappa},\]
so that $\hat{m}$ is at least $m_0$ and thus $\widehat{G}_n=\widehat{G}_{n,m_0}\in \Glmo$. Moreover, 
\[\| F(\cdot, \widehat{G}_n) - F(\cdot,G_0) \|_{\infty} \le 2 \| F(\cdot, G_0) - F_n \|_{\infty}\le 2z_n<\eps'',\]
so that $\widehat{G}_n$ must be in $\mathcal{W}_{G_0}(\eps)$. Hence, by Theorem~\ref{equal_compo} and \eqref{fois2}, we get:
\begin{eqnarray*}
 W(\widehat{G}_n,G_0) &\le & \frac1\delta  \|F(\cdot,\widehat{G}_n)-F(\cdot,G_0)\|_{\infty}\\
 &\le & \frac2\delta \|F_n-F(\cdot,G_0)\|_{\infty}.
\end{eqnarray*}
By Lemma~\ref{lemDKW} for $A=A_n$, $C=2/\delta$ and $d=1$ and \eqref{DKW} for $z=z_n$, we deduce 
\[\E_{G_0}\left[ W(\widehat{G}_n,G_0)\right]\le  \frac2\delta\sqrt{\frac{\pi}{2}} n^{-1/2} + 2\Diam(\Theta)\e^{-2n^{2\kappa}},\]
and we are done.
\end{proof}

\section{Proofs}
\label{proofs}
\subsection{The coarse-graining tree and Theorem~\ref{orders}}
\label{tree}

Theorem~\ref{orders}.\ref{global} is a consequence of Theorem~\ref{orders}.\ref{local} and compactness and identifiability. Details in the supplemental part \citep{Supp}.

We  split the proof of Theorem~\ref{orders}.\ref{local} into three steps. 

\subsubsection*{Step 1: selecting  $(G_{1,n},G_{2,n})$ and related
  scaling sequences} We have to prove
\[\lim_{n\to \infty}\uparrow \inf_{\substack{G_1,G_2\in\Glm \cap
    \mathcal{W}_{G_0}(\frac1n)\\G_1\ne G_2} }   \frac{\left\lVert F(\cdot,
    G_1) - F(\cdot, G_2) \right\rVert _{\infty}}{W(G_1, G_2)^{2m - 2m_0 +
    1}} > \delta,\]
for some $\delta >0$. Choose for each $n$ distinct mixing distributions  $G_{1,n}, G_{2,n}$ in
$\Glm \cap\mathcal{W}_{G_0}(\frac1n)$  such that, setting $\Delta G_n=G_{1,n}- G_{2,n}$,
\[\inf_{\substack{G_1,G_2\in\Glm \cap\mathcal{W}_{G_0}(\frac1n)\\G_1\ne G_2} }   \frac{\left\lVert F(\cdot,
    G_1) - F(\cdot, G_2) \right\rVert _{\infty}}{W(G_1, G_2)^{2m - 2m_0
    +1}} +\frac{1}{n}\ge 
\frac{\left\| F(\cdot, \Delta G_n) \right\| _{\infty}}{W(\Delta G_n)^{2m -
    2m_0 + 1}}.\]
We may and do assume that $(G_{1,n})\subset \mathcal{G}
_{m_{1}}$ and $(G_{2,n})\subset \mathcal{G}
_{m_{2}}$ for some $m_1,m_2$ at most $m$.  We can then write
\[G_{1,n} =\sum_{j=1}^{m_1}\pi_{1,j,n}\delta_{\theta_{1,j,n}}
   \quad\text{and}\quad G_{2,n} =\sum_{j=m_1+1}^{m_1+m_2}\pi_{2,j,n}\delta_{\theta_{2,j,n}}\]
and thus the signed measure $\Delta G_n$ is:
\begin{equation*}
 \Delta G_n=\sum_{j=1}^{m_1+m_2}\pi_{j,n}\delta_{\theta_{j,n}}
\end{equation*}
with 
\[(\pi_{j,n} ,\theta_{j,n})=
\begin{cases}
(\pi_{1,j,n},\theta_{1,j,n}) & \text{for $j\in\lb 1,m_1\rb$ }\\
(- \pi_{2,j,n}, \theta_{2,j,n}) & \text{for $j\in\lb m_1+1,m_2\rb$ }
\end{cases}.
\]
Up to selecting a subsequence of $\Delta G_n$,
we may find a finite number of scaling sequences
$\eps_s(n)$, $s\in \lb 0,S\rb$,  such that 
\begin{equation}
  \label{eps_scale}
  0\equiv \eps_0(n) < \eps_1(n)  < \cdots < \eps_S(n)  \equiv
  1\text{ with }\eps_s(n)  = o\big(\eps_{s+1}(n)\big) ,
\end{equation}
and  such that they are of the same order as  the rates of
convergence of the various $|\theta_{j,n} - \theta_{j',n}|$ for $j,j'\in \lb
1,m_1+m_2\rb$ and
$\lvert \sum_{j\in J}\pi_{j,n}\rvert$ for $J\subset \lb
1,m_1+m_2\rb$. That is, there are integers  $\fs(j,j')$  and $\fs_\pi(J)$  in $\lb 0,S\rb$  
such that
\begin{equation}
  \label{sjjsJ}
 \left\lvert \theta_{j,n} - \theta_{j',n}
 \right\rvert \asymp \eps_{\fs(j,j')}(n)\quad\text{and}\quad \left\lvert \sum_{j\in J}\pi_{j,n}
 \right\rvert \asymp \eps_{\fs_\pi(J)}(n). 
\end{equation}
Note that the map $\fs(\cdot,\cdot)$ defined by \eqref{sjjsJ} is an ultrametric on $\lb 1,m_1+m_2\rb$ (but does not
separate points).  We also define  the $\fs$-diameter of subsets $J$ of $\lb 1,m_1+m_2\rb$
by
\begin{align*}
    \fs(J) & = \max_{j,j' \in J} \fs(j,j').   
\end{align*}

\subsubsection*{Step 2: construction of the coarse-graining tree and key lemmas} 
Consider the collection $\T$ of distinct ultrametric balls
$J=\{j':\fs(j',j)\le s\}$  that we can make when $j$ ranges over $\lb 1,m_1+m_2\rb$
and $s$ over $\lb 0,S\rb$. This collection defines the coarse-graining
tree we need. Its root is $J_o=\lb 1,m_1+m_2\rb$ and its
nodes $J$ satisfy 
\[J\cap J'\ne \emptyset\implies J\subset J' \text{ or } J'\subset J,\]
by the ultrametric property. 

Let us show how the tree $\T$ looks like with a partial representation :  
\begin{center}
\begin{tikzpicture}
  \tikzstyle{noeud}=[ellipse,draw,thick]
\tikzstyle{noeudpoint}=[ellipse,draw,thick,dashed]
 \tikzstyle{arete}=[-,thick]
\tikzstyle{aretepoint}=[-,thick,dotted]
\tikzstyle{corres}=[<->,>=latex,thick]

\draw [|->] (-6,-5) -- (-6,2);
\draw [|-] (-6,-8) -- (-6,-7);
\draw[dotted] (-6,-7)--(-6,-5);
\draw (-6,2) node[left] {diameter};
\draw (-6,2) node[right] {$\N$};
\draw (-6,1) node[left] {$S$};
\draw (-6,0) node[left] {$\fs(J_o)$};
\draw (-6,0) node[left] {$\fs(J_o)$};
\draw (-6,-2) node[left] {$\fs(J)$};
\draw (-6,-4) node[left] {$\fs(J')$};
\draw (-6,-5) node[left] {$\fs(J'')$};
\draw (-6,-7) node[left] {1};
\draw (-6,-8) node[left] {0};
\draw (-6.1,-7) -- (-5.9,-7);
\draw (-6.1,-5) -- (-5.9,-5);
\draw (-6.1,-4) -- (-5.9,-4);
\draw (-6.1,-3) -- (-5.9,-3);
\draw (-6.1,-2) -- (-5.9,-2);
\draw (-6.1,-1) -- (-5.9,-1);
\draw (-6.1,0) -- (-5.9,0);
\draw (-6.1,1) -- (-5.9,1);
  \node[noeud] (R) at (0,0) {$\underset{1}{\bullet}\underset{2}{\bullet}\cdots \underset{j}{\bullet}\cdots \underset{k}{\bullet}\cdots\underset{m_1+m_2}{\bullet}$};
\draw[left] (-2.8,0) node {root $J_o$} ;

\node[noeud] (J) at (-1.5,-2) {$\cdots \underset{j}{\bullet}\cdots\underset{k}{\bullet}\cdots$};
\draw[left] (-3.2,-2) node {$J$} ;

\node (textJ) at (2.5,-2){$|\theta_{j,n}-\theta_{k,n}|\asymp\eps_{\fs(J)}(n)$};
\draw[corres] (J)to[bend left](textJ);

\node[noeud] (J') at (-3.5,-4) {$\cdots\underset{j}{\bullet}\cdots$};
  \draw[left] (-4.6,-4) node {$J'$} ;

\node[noeud] (J'') at (1.5,-5) {$\cdots\underset{k}{\bullet}\cdots$};
  \draw[left] (0.4,-5) node {$J''$} ;

\node[noeud] (end) at (-4.5,-8) {$\underset{j}{\bullet}\bullet\bullet$};

\node[noeud] (end2) at (3,-8) {$\underset{k}{\bullet} \bullet$};

\node (textend2) at (-1,-8){ends of $\fs$-diameter zero};

\draw[arete] (R)--(J);
\draw[arete] (J)--(J');
\draw[arete] (J)--(J'');
\draw[aretepoint] (J'')--(2.5,-7);
\draw[arete] (2.5,-7)--(end2);
\draw[arete] (J')--(-3.75,-5);
\draw[aretepoint] (-3.75,-5)--(-4.25,-7);
\draw[arete] (-4.25,-7)--(end);
\end{tikzpicture}
\end{center}
\medskip 

Note that the ends are not necessarily singletons since the metric $\fs(\cdot,\cdot)$ does not
separate points.

We define the \emph{parent} $\parent{J}$
of a node  $J\subsetneq J_0$ by  
\[(J\subset I\subsetneq \parent{J},
I\in\T)\implies I=J.\]
The \emph{set of
descendants} and the \emph{set of children} of a node $J$ are
\begin{eqnarray*}
 \mathrm{Desc}(J) &=& \{I\in\T : \parent{I}\subset J\}, \\
\mathrm{Child}(J) & =& \{I\in\T :\parent{I}= J\}.
\end{eqnarray*}
The following two lemmas are proved in the supplement part
\citep[Section~3]{Supp}. 
\begin{lem}\label{scW} With the above notations,
  \begin{equation}
\label{scaleW}
    W\left(\Delta G_n\right)  \asymp\max_{J\in \mathrm{Desc}(J_o)} \eps_{\fs_\pi(J)}(n) \eps_{\fs(\parent{J})}(n).
\end{equation}
\end{lem}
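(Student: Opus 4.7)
The plan is to establish both inequalities in the claimed $\asymp$ by working with the Kantorovich--Rubinstein dual representation \eqref{dualW}. For the upper bound I telescope any $1$-Lipschitz function $f$ along the tree $\T$; for the lower bound I exhibit an explicit tent function supported near a node achieving the maximum.

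\emph{Upper bound.} For each node $J\in\T$, pick a representative index $j_J\in J$ and set $\theta_J=\theta_{j_J,n}$. For any index $j\in\lb 1,m_1+m_2\rb$, let $J_L(j)$ denote the smallest element of $\T$ containing $j$, and let $J_L(j)\subset\cdots\subset J_o$ be the chain of its ancestors. Telescoping gives
\[
f(\theta_{J_L(j)})-f(\theta_{J_o})=\sum_{\substack{J\in\T:\,j\in J\\ J\ne J_o}}\bigl(f(\theta_J)-f(\theta_{\parent{J}})\bigr).
\]
Since any end $J_L\in\T$ has $\fs$-diameter zero, all $\theta_{j,n}$ with $j\in J_L$ are equal to $\theta_{J_L}$, so $f(\theta_{j,n})=f(\theta_{J_L(j)})$. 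Multiplying the identity above by $\pi_{j,n}$, summing over $j$, and using $\sum_j\pi_{j,n}=0$ yields
\[
\int f\, \dd \Delta G_n=\sum_{J\in\T\setminus\{J_o\}}\bigl(f(\theta_J)-f(\theta_{\parent{J}})\bigr)\sum_{j\in J}\pi_{j,n}.
\]
Since $f$ is $1$-Lipschitz and both $\theta_J$ and $\theta_{\parent{J}}$ are indexed by elements of $\parent{J}$, we have $|f(\theta_J)-f(\theta_{\parent{J}})|\lec\eps_{\fs(\parent{J})}(n)$; combined with $|\sum_{j\in J}\pi_{j,n}|\asymp\eps_{\fs_\pi(J)}(n)$ and the finiteness of $\T$, this gives $W(\Delta G_n)\lec \max_{J\in\D{J_o}}\eps_{\fs_\pi(J)}(n)\eps_{\fs(\parent{J})}(n)$ after taking the supremum over $1$-Lipschitz $f$.

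\emph{Lower bound.} Let $J^*\in\D{J_o}$ achieve the maximum, and fix $\theta^*=\theta_{j,n}$ for some $j\in J^*$. Choose constants $C_1$ large and $c_1>0$ small (independent of $n$) such that the closed ball of radius $r_n:=C_1\eps_{\fs(J^*)}(n)$ around $\theta^*$ contains every $\theta_{j,n}$ with $j\in J^*$, while the enlarged ball of radius $r_n+\alpha_n$ with $\alpha_n:=c_1\eps_{\fs(\parent{J^*})}(n)$ is disjoint from $\{\theta_{j,n}:j\notin J^*\}$. The latter is possible because, for any $j\notin J^*$, the smallest node of $\T$ containing $j$ and $J^*$ is a strict ancestor of $J^*$, so $|\theta_{j,n}-\theta^*|\gec\eps_{\fs(\parent{J^*})}(n)$, whereas $\eps_{\fs(J^*)}(n)=o(\eps_{\fs(\parent{J^*})}(n))$ by \eqref{eps_scale}. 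Let $f$ be the $1$-Lipschitz function equal to $\alpha_n$ on the inner ball, equal to zero outside the outer ball, and interpolating with slope $-1$ in the annulus. Then
\[
\int f\, \dd\Delta G_n=\alpha_n\sum_{j\in J^*}\pi_{j,n},\qquad \Bigl|\int f\, \dd\Delta G_n\Bigr|\asymp\eps_{\fs_\pi(J^*)}(n)\eps_{\fs(\parent{J^*})}(n),
\]
and applying $\pm f$ in \eqref{dualW} gives the matching lower bound on $W(\Delta G_n)$.

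The main obstacle is ensuring that the implicit constants (the $\lec$ in the upper bound, and $C_1,c_1$ and the $\asymp$ in the lower bound) can be taken uniform in $n$. This rests on the finiteness of $\T$ together with the uniformity of the constants hidden in \eqref{sjjsJ}, which is ensured by passing beforehand to a subsequence along which every ratio $|\theta_{j,n}-\theta_{j',n}|/\eps_{\fs(j,j')}(n)$ and $|\sum_{j\in J}\pi_{j,n}|/\eps_{\fs_\pi(J)}(n)$ converges to a positive finite limit; then $C_1$ and $c_1$ can be chosen once and for all.
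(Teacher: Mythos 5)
Your proof is correct and follows essentially the same route as the paper: the Kantorovich--Rubinstein dual form \eqref{dualW}, a tree-based estimate of $\int f\,\dd\Delta G_n$ for the upper bound, and an explicit $1$-Lipschitz plateau function isolating the maximizing node for the lower bound. The only difference is cosmetic: you organize the upper bound as an exact telescoping (Abel-type) identity $\int f\,\dd\Delta G_n=\sum_{J\ne J_o}\bigl(f(\theta_J)-f(\theta_{\parent{J}})\bigr)\pi(J)$, whereas the paper proves the same bound by induction up the tree, and your lower-bound test function is the paper's construction up to an additive constant and the choice of fixed constants $C_1,c_1$ in place of the exact quantities $\fe(J),\fd(J)$.
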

Set now  for $J\subset J_o$, 
\[F(x,J)=\sum_{j\in J}\pi_{j,n}F(x,\theta_{j,n}),\]
so that, in particular, $F(x,\Delta G_n)=F(x,J_o)$.  We shall use  Taylor expansions along the tree $\T$ to express the order of $F(x,\Delta G_n)$ in terms of the scaling functions $\eps_s(n)$.

\begin{lem}\label{lemrec}
  Let $J$ be a node and set $d_J=\mathrm{card}(J)$. Pick $\theta_J$ in
  the set $\{\theta_{j,n}:j\in J\}$. The dependence on  $n$ is  skipped from the following notations. There are a vector $a_J=(a_J(k))_{0\le k\le 2m}$ and a remainder $R(x,J)$ such that 
\begin{equation}
    \label{hyprec}
    F(x,J) = \sum_{k=0}^{2m} a_J(k) \eps_{\fs(J)}^k F^{(k)}(x, \theta _J) + R(x,J),
\end{equation}
where:
\begin{enumerate}[(a)]
\item \label{coeffsbornes}$\displaystyle a_J(0)=\sum_{j\in
    J}\pi_j$  and $|a_J(k)|\lec 1 $ for all $k\le 2m$,
 \item \label{premierscoeffs} There is a coefficient
   $a_{J}(k)$ of maximal order among the $d_J$ first
   ones. That is, there is an integer $k_J<d_J$ such that 
            \begin{equation*}
            \|a_J \|= \max_{k\le 2m} | a_{J}(k) |
            \asymp  |a_J(k_J)| ,
        \end{equation*}
\item  \label{borneinfcoeff}  The norm $\|a_J \|$ is bounded from below (up to a constant) by
  a quantity linked to the Wasserstein distance: 
\[\|a_J \|\gec \max\left(\eps _{\fs_\pi(J)},
  \max_{I\in\mathrm{Desc}(J)}\eps _{\fs_\pi(I)} \left(\frac{\eps
      _{\fs(\parent{I})}}{\eps_{\fs(J)}}\right) ^{d_J - 1}\right),\]
\item \label{remaind}  The remainder term is negligible. Uniformly in $x$:
 \[R(x,J) = o\left(\|a_J\|\, \varepsilon_{\fs(J)}^{2m}\right),\]
\item \label{epsep} For distinct $I,I'\in \C{J}$, we have
  $|\theta_{I}-\theta_{I'}|\asymp \eps_{\fs(J)}$.
\end{enumerate}  
\end{lem}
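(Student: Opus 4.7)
The plan is to prove Lemma~\ref{lemrec} by induction on the tree $\T$, from leaves toward the root. \textbf{Base case:} for a leaf $J$ we have $\fs(J)=0$, so the points $\theta_{j,n}$, $j\in J$, coincide for $n$ large enough; take $\theta_J$ to be this common value. Then $F(x,J) = \big(\sum_{j\in J}\pi_j\big) F(x,\theta_J)$, giving $a_J(0) = \sum_{j\in J}\pi_j$, $a_J(k)=0$ for $k\ge 1$ and $R(x,J)=0$. Properties (a)--(e) are immediate, with $k_J=0$ and the ``descendant max'' in (c) absent.

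\textbf{Inductive step:} let $J$ be non-leaf with children $I_1,\dots,I_r\in\C{J}$, apply the induction hypothesis (IH) to each $I_t$, and pick $\theta_J = \theta_{I_{t_0}}$ for some $t_0$. Set $h_t := \theta_{I_t} - \theta_J$: by (e) applied at level $J$, $|h_t| \asymp \eps_{\fs(J)}$ for $t\ne t_0$ (and $h_{t_0}=0$). Taylor-expand each $F^{(k)}(x,\theta_{I_t})$ about $\theta_J$ up to order $2m-k$, the remainder being controlled uniformly in $x$ via the modulus $\omega$ of Assumption~B($2m$). Substituting into the IH expansion for $F(x,I_t)$ and summing $F(x,J)=\sum_t F(x,I_t)$ yields \eqref{hyprec} with explicit coefficients
\[
a_J(p) \;=\; \sum_{t=1}^r \sum_{k=0}^{p} a_{I_t}(k) \left(\frac{\eps_{\fs(I_t)}}{\eps_{\fs(J)}}\right)^{\!k} \frac{(h_t/\eps_{\fs(J)})^{p-k}}{(p-k)!}.
\]
From this closed form, the easy points follow at once: (a) for $p=0$ we recover $a_J(0)=\sum_t a_{I_t}(0) = \sum_{j\in J}\pi_j$, and the boundedness $|a_J(k)|\lec 1$ comes from $|a_{I_t}(k)|\lec 1$ combined with $\eps_{\fs(I_t)}\le \eps_{\fs(J)}$ and $|h_t|\lec \eps_{\fs(J)}$; (e) follows from the ultrametric property of $\fs$, which forces $\fs(j,j')=\fs(J)$ whenever $j\in I$, $j'\in I'$ are in distinct children; and (d) recombines the IH remainders $o(\|a_{I_t}\|\eps_{\fs(I_t)}^{2m})$ with the Taylor remainders from $\omega$, which become $o(\|a_J\|\eps_{\fs(J)}^{2m})$ once (c) is available.

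The main obstacle is (b) --- and, building on it, (c). This is precisely where the proof of \citet[Lemma~2]{Chen} fails. Discarding the subleading factors $\rho_t := \eps_{\fs(I_t)}/\eps_{\fs(J)} = o(1)$, the dominant part of $a_J(p)$ reduces to $\sum_t a_{I_t}(0)\sigma_t^p/p!$ with $\sigma_t := h_t/\eps_{\fs(J)}$ pairwise distinct by (e). An $r\times r$ Vandermonde inversion then yields some index $p< r \le d_J$ with $|a_J(p)|\asymp \max_t |a_{I_t}(0)|$, \emph{provided} the latter is of order $\|a_J\|$. The delicate case --- all sub-sums $a_{I_t}(0) = \sum_{j\in I_t}\pi_j$ being small (many components colliding) --- is the one overlooked in \citeauthor{Chen}: here the contribution to $\|a_J\|$ must come from the higher IH coefficients $a_{I_t}(k_{I_t})$ with $k_{I_t}<d_{I_t}$. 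A stacked/block Vandermonde argument across the children then exhibits an index $k_J$ with $|a_J(k_J)| \asymp \|a_J\|$, the strict bound $k_J < d_J$ matching the total budget $\sum_t d_{I_t} = d_J$ of ``useful'' IH indices. Property (c) finally follows by propagating the IH lower bounds on each child through the same identity: the $\eps_{\fs_\pi(J)}$ term is furnished by $a_J(0)$, while for a descendant $I \in \D{J}$ the factor $(\eps_{\fs(\parent{I})}/\eps_{\fs(J)})^{d_J - 1}$ arises as the codimensional cost of the linear extraction of a coefficient $a_J(p)$ with $p\le d_J - 1$, i.e.\ as the worst-case denominator in the inverse of a Vandermonde-type system of size $d_J$.
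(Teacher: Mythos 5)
Your proposal is correct and follows essentially the same route as the paper's proof: induction from the ends of $\T$ up to the root, the same Taylor-expansion recursion for the coefficients (the paper's \eqref{etakJ}), the ultrametric argument for (\ref{epsep}), and a stacked/confluent Vandermonde extraction over the children's first $d_I$ coefficients (the paper's Lemma~\ref{indep} and Corollary~\ref{memenorme}), which is exactly the mechanism that repairs \citeauthor{Chen}'s oversight when the weight sums $a_I(0)$ cancel. The points you compress are precisely the steps written out in the supplement, namely that the children's coefficients of index $\ell\ge d_I$ are negligible by the induction hypothesis (\ref{premierscoeffs}) together with $\eps_{\fs(I)}=o(\eps_{\fs(J)})$ (see \eqref{eq:pto}), that the coefficients $a_J(k)$ with $k\ge d_J$ cannot dominate (see \eqref{equi}), and that in (\ref{borneinfcoeff}) the exponent $d_J-1$ comes from the rescaling factors $(\eps_{\fs(I)}/\eps_{\fs(J)})^{\ell}$, $\ell\le d_J-1$, rather than from any conditioning of the Vandermonde system, whose constants are of order one by the $\eps$-separation.
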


\subsubsection*{Step 3: concluding the proof of Theorem~\ref{orders}.\ref{local}}  Consider the root $J_o$
of the tree $\T$ and distinguish two cases:

\noindent\textbf{Case 1: $\fs(J_o)<S$.} Set for short $J=J_o$. In this case we have
$\eps_{\fs(J)}=o(1)$ and may apply directly Lemma~\ref{lemrec} to
$J$:
\[F(x,\Delta G_n) = F(x,J)=\sum_{k=0}^{2m} a_{J}(k) \eps _{\fs(J)}^k
F^{(k)}(x, \theta _{J}) + R(x,J) , \]
so that 
\begin{equation}
  \label{eq:triang}
\left\lVert F(\cdot,\Delta G_n) \right\rVert _{\infty}\ge   \left\|\sum_{k=0}^{2m} a_{J}(k) \eps
  _{\fs(J)}^k F^{(k)}(\cdot, \theta _{J})\right\|_\infty - \left\|R(\cdot, \theta _{J})\right\|_\infty.
\end{equation}
By using the lower bound \eqref{alpha_bound_sep}, we get for all $k<d_J$
\begin{equation}
  \label{eq:minorm}
 \left\|\sum_{k=0}^{2m} a_{J}(k) \eps
  _{\fs(J)}^k F^{(k)}(\cdot, \theta _{J})\right\|_\infty\gec \max_k
\left|a_{J}(k)\eps _{\fs(J)}^k\right|\ge \left|a_{J}(k)\eps _{\fs(J)}^{d_J-1}\right| ,
\end{equation}
and, taking $k=k_{J}$,  (\ref{premierscoeffs}) and
(\ref{borneinfcoeff})  yield
\begin{equation*}
  |a_{J}(k)\eps _{\fs(J)}^k|\gec
  \max_{I\in\D{J}}\eps_{\fs_\pi(I)}\eps_{\fs(\parent{I})}^{d_{J}-1};
\end{equation*}
going on, since $d_J \le m_1+m_2\le 2m$, we get 
\begin{equation}
  \label{eq:mincoeff}
  |a_{J}(k)\eps _{\fs(J)}^k|\gec 
  \max_{I\in\D{J}}\eps_{\fs_\pi(I)}\eps_{\fs(\parent{I})}^{2m-1} .
\end{equation}
Since $R(x,J)$ is of smaller order by (\ref{remaind}), we get from
\eqref{eq:triang}, \eqref{eq:minorm}, \eqref{eq:mincoeff} 
\[\left\lVert F(\cdot,\Delta G_n) \right\rVert _{\infty}\gec  \max_{I\in\D{J}}\eps_{\fs_\pi(I)}
\eps_{\fs(\parent{I})}^{2m-1}\]
so that Lemma~\ref{scW} gives
\[\left\lVert F(\cdot,\Delta G_n) \right\rVert _{\infty}\gec W(\Delta G_n)^ {2m-1}.\]
\textbf{Case 2:  $\fs(J_o)=S$.} We split $\Delta G_n$ over the
  first-generation children:
  \begin{eqnarray*}
  F(x,\Delta G_n) = F(x,J_o)&=&\sum_{J\in\C{J_o}}F(x,J)\\
&=& \sum_{J\in\C{J_o}}\left[\sum_{k=0}^{2m} a_J(k) \eps _{\fs(J)}^kF^{(k)}(x, \theta _J) + R(x,J)  \right].
  \end{eqnarray*}
Moreover, by  (\ref{epsep}), the $\theta _J$ for
$J\in\C{J_o}$ are $\eps$-separated for some $\eps >0$ so that the
lower bound  \eqref{alpha_bound_sep} can be applied in the bracket above and yields, since the
$R(x,J)$'s are negligible:
\begin{equation*}
 \left\| F(\cdot,\Delta G_n) \right\| _{\infty}\gec
  \max_{J\in\C{J_o}}\max_{k\le 2m}|a_{J}(k)\eps _{\fs(J)}^k|\ge
  \max_{J\in\C{J_o}}\max_{k<d_J}|a_{J}(k)\eps _{\fs(J)}^k|. 
\end{equation*}
If we take $k=0$ rather than the maximum for $k<d_J$ in the last bound above, we deduce from (\ref{coeffsbornes}) that  
\[\left\lVert F(\cdot,\Delta G_n) \right\rVert _{\infty}\gec
\max_{J\in\C{J_o}}\eps_{\fs_\pi(J)},\] 
whereas if we substitute $\eps _{\fs(J)}^{d_J-1}$ to $\eps
_{\fs(J)}^k$, we get from  (\ref{premierscoeffs}) and next (\ref{borneinfcoeff})
\begin{eqnarray*}
  \left\lVert F(\cdot,\Delta G_n) \right\rVert _{\infty} &\gec&
  \max_{J\in\C{J_o}}\|a_J\|\eps _{\fs(J)}^{d_J-1}\\
&\gec&
  \max_{J\in\C{J_o}}\max_{I\in\D{J}}\eps_{\fs_\pi(I)}
\eps_{\fs(\parent{I})}^{d_J-1}.
\end{eqnarray*}
We may combine the two lower bounds above and, after recalling that
$\eps_{\fs(J_o)}=1$ and setting $d_\star=\max_{J\in\C{J_o}}d_J$,  get
\begin{eqnarray*}
  \left\lVert F(\cdot,\Delta G_n) \right\rVert _{\infty}&\gec&  \max_{J\in\C{J_o}}\max_{I\in\D{J}\cup\{J\}}\eps_{\fs_\pi(I)}
\eps_{\fs(\parent{I})}^{d_J-1}\\
&\gec&
\max_{J\in\D{J_o}}\eps_{\fs_\pi(J)}\eps_{\fs(\parent{J})}^{d_\star-1}\\
&\gec& W(\Delta G_n)^ {d_\star-1} ,
\end{eqnarray*}
where the last inequality comes from Lemma~\ref{scW}. It remains to
estimate $d_\star$. Since $G_{1,n}$ and $G_{2,n}$ converge to $G_0\in
\Gm0$, the root $J_o$ (of cardinality $m_1+m_2$) has at least $m_0$
children with at least two elements. Thus, the cardinality $d_\star$
of the biggest child is bounded by $m_1+m_2-2(m_0-1)$ so that 
\[ \left\lVert F(\cdot,G_n) \right\rVert _{\infty}\gec W(\Delta G_n)^ {m_1+m_2-2m_0+1}\gec W(\Delta G_n)^ {2m-2m_0+1}.\]

Finally, if $m_0$ is more than one, we are in Case 2 where
$\fs(J_o)=S$ and if $m_0$ is one, Case 1 and Case 2 can occur. But whatever the case, we always have 
\[\left\lVert F(\cdot,G_n) \right\rVert _{\infty}\gec W(\Delta G_n)^ {2m-2m_0+1}\]
so that Theorem~\ref{orders}.\ref{local}.  is proved.

\subsection{Proof of  Theorem~\ref{LAN}}
 Set $d=m-m_0+1$ for short. 
 Consider  numbers $\mu_0=1, \mu_1, \dots,
 \mu_{2d-2}$ such that the Hankel matrices  $(M_k)_{i,j} =
 \mu_{i+j-2}$ satisfy $\det M_k > 0$ for $k\in\lb 1,d-1\rb$. By
 Theorem~\ref{Lindsay}.\ref{lind}., we may then define for any real number $u$
 a distribution $G(u) = \sum_{j= m_0}^m \pi_j(u) \delta _{h_j(u)}$
with initial moments $1, \mu_1, \dots, \mu_{2d-2}, \mu_{2d-1} =u$. Moreover, the unicity in Theorem \ref{Lindsay}.\ref{lind}. implies that, on the
set 
\[\left\{(\pi_1,\ldots,\pi_d,h_1,\ldots,h_d)\in\R^{2d}:\pi_1>0,\ldots,\pi_d>0,h_1<\cdots
<h_d\right\},\]
the following application
is injective:
\begin{align*}
\phi :(\pi_1,\ldots,\pi_d,h_1,\ldots,h_d)\mapsto 
\left(\sum_1^d\pi_j,\sum_1^d\pi_j h_j,\sum_1^d\pi_j h_j^2,\ldots,\sum_1^d\pi_j h_j^{2d-1}\right).
\end{align*}
Now, its Jacobian is non-zero \citep[see][Section~\ref{jaco}]{Supp}: 
\begin{align}
    \label{Jac}
J(\phi)=(-1)^{\frac{(d-1)d}{2}}\,\pi_1\cdots\pi_d \prod_{1\le j<k\le d}(h_j- h_k)^4.  
\end{align}
Thus the inverse of $\phi$ is locally continuous, so that, in
particular,  the $h_j(u)$
are all continuous. Thus, we can set $H(U)=\max_{j\le d}\max_{|u|\le
  U}\left|h_j(u)\right|$ which is finite for any $U>0$ and we choose a
positive sequence $U(n)$ such that $U(n) \to \infty$ and $H(U(n)) n^{-1/(4d-2)} \to 0$. 

We now define support points $\theta _{j,n}(u)  = \theta _{m_0} +
n^{-1/(4d-2)} h_j(u) $ in $\Theta$ and mixing distributions around
$G_0$ by 
\begin{equation}
 G_n(u)  = \sum_{j=1}^{m_0 - 1} \pi_j \delta_{\theta _j} + \pi_{m_0}
\sum_{j= m_0}^m \pi_j(u) \delta _{\theta_{j,n}(u)}.
\end{equation}
Note that $G_n(0)$ and $G_0$ do not coincide.
The form of $G_n(u)$ makes it clear that it converges to $G_0$ at
speed  $n^{-1/(4d - 2)} $: it is easily seen  that for  $|u|\le U$
\[W(G_{n}(u), G_0)\le \pi_{m_0}H(U) n^{-1/(4d - 2)}.\]
This proves \eqref{lan1}.

Moreover, since all other points and proportions are equal, the
transportation distance  $ W(G_{n}(u), G_{n}(u'))$ is equal to the
transportation distance between the last $p$ components. Since those
support points keep the same weights and are homothetic with scale
$n^{-1/(4d- 2)}$ around $\theta _{m_0}$, we have exactly 
\[ W(G_{n}(u), G_{n}(u')) =  W(G_{1}(u), G_{1}(u')) n^{-1/(4d - 2)}.\]
This proves \eqref{lan2}.

We now prove local asymptotic normality. As before, the probability under the mixing distribution $G_n(0)$ is denoted by $\P_{G_n(0)}$ and the corresponding expectation
$\E_{G_n(0)}$. Let $X_{1,n}, \dots, X_{n,n}$ be an
i.i.d. sample with density $\otimes_{i=1}^nf\left(\cdot,G_n(0)\right)$. Then, we can write the log-likelihood ratio as
\begin{equation*}
    Z_{n,0}(u)=\ln \left(\frac{\prod_{i=1}^nf(X_{i,n}, G_n(u))}{\prod_{i=1}^nf(X_{i,n}, G_n(0))}\right)=\sum_{i=1}^n \ln \left( 1 + Y_{i,n} (u)\right)
\end{equation*}
with
\begin{equation}
\label{Yinu}
  Y_{i,n} (u)= \frac{f\left(X_{i,n}, G_n(u)\right)-f\left(X_{i,n}, G_n(0)\right) }{ f(X_{i,n}, G_n(0))}.
\end{equation}
Set also
\begin{equation}
\label{Zin}
Z_{i,n}=\frac{f^{(2d-1)}(X_{i,n},\theta_{m_0})}{f(X_{i,n}, G_n(0))}.  
\end{equation}
Using Taylor expansions with remainder, we find that $Y_{i,n} (u)$ and $Z_{i,n}$ are centered under $\P_{G_n(0)}$ \citep[see][]{Supp}.

Consider now
\begin{equation}
\label{Zn}
  Z_n=\pi_{m_0}n^{-1/2}\sum_{i=1}^nZ_{i,n}.
\end{equation}
By Proposition~\ref{tversmix} in the supplemental part \citep{Supp},  for $n$ large enough, we have $\E_{G_n(0)}
\left|Z_{1,n}\right|^2 \asymp 1 $. Up to taking a subsequence, we may
then assume  $\E_{G_n(0)} \left|Z_{1,n}\right|^2\to \sigma ^2$ for some
positive $\sigma $. By Proposition~\ref{tversmix} again, we have $
\E_{G_n(0)} \left|Z_{1,n}\right|^3\lec 1$ for all $n$ large enough.

We may then apply Lyapunov theorem \cite[Theorem 23.7]{Bill} to prove that
\begin{align}
    \label{cvZn}
    Z_n &\xrightarrow[]{d} \mathcal{N} (0, \Gamma
          )\quad\text{with}\quad    \Gamma = \sigma ^2\pi_{m_0}^2.
\end{align}
Indeed, the Lyapunov condition holds:
\[\frac{\sum_{i=1}^n \E_{G_n(0)} \left|Z_{i,n}\right|^3}{\left[\sum_{i=1}^n\E_{G_n(0)}
\left|Z_{i,n}\right|^2\right]^{3/2}}\lec \frac{1}{\sigma^3\sqrt{n}}\xrightarrow[n\to\infty]{} 0\]
so that 
\[\frac{\sum_{i=1}^n Z_{i,n}}{\sqrt{\sum_{i=1}^n\E_{G_n(0)}
\left|Z_{i,n}\right|^2}}=\frac{\sum_{i=1}^n Z_{i,n}}{\sqrt{n\E_{G_n(0)}
\left|Z_{1,n}\right|^2}}\xrightarrow[]{d} \mathcal{N} (0, 1 )\]
and \eqref{cvZn} follows easily from \eqref{Zn}. 

Now, to get the convergence  in probability of
$Z_{n,0}-uZ_n+\frac{u^2}{2}\Gamma$ to zero, we show in the supplemental part \citep{Supp} the following convergences for all $u$:
\begin{eqnarray}
A_n(u)=\sum_{i=1}^nY_{i,n}(u) -uZ_n & \xrightarrow[]{L^2} & 0, \label{p1}\\
B_n(u)=  \sum_{i=1}^nY_{i,n}(u)^2-u^2\Gamma& \xrightarrow[]{L^1}  &0, \label{p2}\\
 C_n(u)=\sum_{i=1}^n|Y_{i,n}(u)|^3&  \xrightarrow[]{L^1}& 0. \label{p3}
\end{eqnarray}
Then, setting 
\[D_n(u)=Z_{n,0}(u)-\sum_{i=1}^nY_{i,n}(u)+\frac12
  \sum_{i=1}^nY_{i,n}(u)^2,\]
we  have, since $|\ln (1+y)-y+y^2/2|\le |y|^3$ for $|y|\le
2/3$,
\[\left|D_n(u)\right|\le C_n(u)\]
with probability going to one, so that
\begin{equation*}
  Z_{n,0}(u)-uZ_n+\frac{u^2}{2}\Gamma = A_n(u)+\frac12 B_n(u)+D_n(u)
\end{equation*}
tend to $0$ in probability.

\bigskip

\textbf{Acknowledgements.} We thank Sébastien Gadat for numerous suggestions that have greatly improved the presentation of the paper, and Élisabeth Gassiat for helpful discussions.
\medskip

This work was supported in part by the Labex CEMPI  (ANR-11-LABX-0007-01).

\begin{supplement}[id=suppA]
\stitle{Auxiliary results and technical details}
\slink[doi]{10.1214/00-AOASXXXXSUPP}
\sdatatype{.pdf} 
\sdescription{This supplemental part gathers some proof details on some assertions given in the paper.}
\end{supplement}

\bibliographystyle{imsart-nameyear}
\bibliography{minimax_mixtures_final}

\appendix

\section{Auxiliary matrix tool}

\begin{lem}
    \label{indep}
    Let $j$, $d_i$ and $d$ be positive integers such that
    $\sum_{i=1}^j d_i = d$. Consider numbers $\theta_1,\cdots,\theta_j$ all
    distinct. Write 
\[\mathcal{I} =\left\{ (i,\ell) \in \mathbb{N} : 1
      \le i \le j, 1 \le \ell \le d_i\right\}.\]
 Define for each $(i,\ell)\in\mathcal{I}$ a 
    $d$-dimensional column vector as follows:
\[ a_{i,\ell}[k]  =\dfrac{\theta _i^{k-\ell}}{(k-\ell)!}\1_{k\ge \ell},\quad 
1\le k\le d, 
\]
and stack these vectors in a $d\times d$ matrix 
\begin{equation}
  \label{A}
A(\theta_1,\ldots,\theta_j)=\left[ a_{1,1} | \dots |a_{1,d_1}|\dots |a_{j,1}| \dots | a_{j,d_j} \right]. 
\end{equation}
Then, the rank of $A(\theta_1,\ldots,\theta_j)$ is $d$.
\end{lem}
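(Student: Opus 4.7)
The plan is to interpret the matrix $A(\theta_1,\ldots,\theta_j)$ as the matrix of a Hermite interpolation problem on polynomials of degree less than $d$, and then invoke the standard counting-of-roots argument.

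First, I would identify a vector $c=(c_1,\ldots,c_d)^T\in\R^d$ with the polynomial
\[ P_c(x) = \sum_{k=1}^d c_k\,\frac{x^{k-1}}{(k-1)!}, \]
of degree at most $d-1$. A direct computation gives
\[ P_c^{(\ell-1)}(\theta_i) = \sum_{k\ge \ell} c_k\,\frac{\theta_i^{k-\ell}}{(k-\ell)!} = \langle c, a_{i,\ell}\rangle, \]
so the $(i,\ell)$-coordinate of $A^T c$ is exactly $P_c^{(\ell-1)}(\theta_i)$. Thus the rank of $A$ (equal to the rank of $A^T$) equals the dimension $d$ if and only if the linear map
\[ c \mapsto \bigl(P_c^{(\ell-1)}(\theta_i)\bigr)_{(i,\ell)\in\mathcal I} \]
is injective on $\R^d$.

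Next I would verify injectivity. Suppose $A^T c=0$. Then for every $i\in\lb 1,j\rb$ we have $P_c^{(\ell-1)}(\theta_i)=0$ for $\ell=1,\ldots,d_i$, so $\theta_i$ is a root of $P_c$ of multiplicity at least $d_i$. Since the $\theta_i$ are pairwise distinct, this means that $\prod_{i=1}^j (x-\theta_i)^{d_i}$ divides $P_c$ in $\R[x]$. But this polynomial has degree $\sum d_i=d$, whereas $\deg P_c\le d-1$; the only possibility is $P_c\equiv 0$, i.e. $c=0$. This yields injectivity, hence $\operatorname{rank} A = d$.

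The only nontrivial point is the book-keeping in the identification $c \leftrightarrow P_c$, that is, checking that the factorial normalization in the definition of $a_{i,\ell}$ is exactly the one that turns the linear functional $c\mapsto\langle c,a_{i,\ell}\rangle$ into the derivative functional $P\mapsto P^{(\ell-1)}(\theta_i)$; everything else is a one-line polynomial-degree argument. In particular, no determinant computation is needed here, which keeps the proof short and clean.
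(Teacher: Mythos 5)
Your proof is correct. The identification of $\langle c,a_{i,\ell}\rangle$ with $P_c^{(\ell-1)}(\theta_i)$ is exactly right (the factorial normalization works out as you claim), and the root-multiplicity count $\deg\prod_i(x-\theta_i)^{d_i}=d>d-1\ge\deg P_c$ forces $P_c\equiv 0$, hence $c=0$ and $\operatorname{rank}A=\operatorname{rank}A^T=d$.

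The paper proves the same invertibility from the other side of the matrix: it takes $\Lambda$ with $A\Lambda=0$, observes that then $\sum_{(i,\ell)}\lambda_{i,\ell}P^{(\ell-1)}(\theta_i)=0$ for every polynomial $P$ of degree at most $d-1$, and then kills the coefficients one by one by plugging in the explicit test polynomials $P(x)=(x-\theta_i)^{r}\prod_{k\ne i}(x-\theta_k)^{d_k}$ for $r=d_i-1,d_i-2,\dots,0$. So both arguments rest on the same Hermite-interpolation reading of the matrix, but yours establishes injectivity of $A^T$ (unisolvence: no nonzero polynomial of degree $<d$ has the prescribed zeros with multiplicities $d_i$), whereas the paper establishes injectivity of $A$ (linear independence of the derivative-evaluation functionals, via an explicit near-dual family of polynomials). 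These are dual formulations of invertibility of the square matrix $A$; your route avoids constructing any explicit polynomials and is arguably the cleaner of the two, while the paper's construction is more hands-on and identifies concretely which polynomial isolates each coefficient $\lambda_{i,\ell}$. Either way the lemma is fully proved.
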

\begin{proof}
 Set for short $A= A(\theta_1,\ldots,\theta_j)$. Let
$\Lambda=(\lambda_{i,\ell})_{(i,\ell)\in\mathcal{I}}  $ be a vector such
that $A \Lambda  = 0$. Proving the lemma is equivalent to proving that
$\Lambda = 0$. Note that for each $k$
\begin{equation*}
  (A \Lambda)_k=\sum_{(i,\ell) \in \mathcal{I} } \lambda
  _{i,\ell}a_{i,\ell}[k]= \sum_{(i,\ell) \in \mathcal{I} }\lambda
  _{i,\ell}\dfrac{\theta _i^{k-\ell}}{(k-\ell)!}\1_{k\ge \ell}=0,
\end{equation*}
so that for any $(d-1)$-degree polynomial $P(x) = \sum_{k=0}^{d-1}
c_k\frac{x^k}{k!}$ , we have
\begin{equation}
  \label{eq:P}
 (c_0, \dots, c_{d-1}) A \Lambda = \sum_{k=0}^{d-1}c_k (A \Lambda)_{k+1}=\sum_{(i,\ell) \in \mathcal{I} } \lambda _{i,\ell} P^{(\ell-1)}(\theta _i)=0.
\end{equation} 
Set $P_i(x)=\prod_{\substack{k=1\\k\ne
    i}}^j (x-\theta_k)^{d_k}$ for each $i\in \lb 1,j\rb$.
Choosing successively  in \eqref{eq:P} the following polynomials
\begin{eqnarray*}
P(x) &=&(x-\theta_i)^{d_i-1}P_i(x),\\
P(x) &=& (x-\theta_i)^{d_i-2}P_i(x),\\
\vdots & & \vdots\\
P(x) &=&(x-\theta_i)^0P_i(x),
\end{eqnarray*}
yields successively $\lambda _{i,d_i}=0$, $\lambda _{i, d_i-1}=0$, ..., $\lambda
_{i, 1}=0$ and we are done.
\end{proof}

\begin{cor}
    \label{memenorme}
Let $\eps >0$ and define the set of $\eps$-separated vectors  in
$\Theta^j$ by 
\[\mathcal{D}_\eps=\left\{(\theta _i)_{1\le i\le j}: \forall i\ne i',\quad |\theta_i-\theta_{i'}|\ge \eps\right\}.\]
For any vector $\Lambda\in\R^d $ and any vector $(\theta _i)_{1\le i\le j}\in
\mathcal{D}_\eps$,
    \[
      \left\lVert A(\theta_1,\ldots,\theta_j) \Lambda  \right\rVert \underset{\eps}{\asymp}\left\lVert \Lambda  \right\rVert,
    \]
where $A(\theta_1,\ldots,\theta_j)$ is as in \eqref{A}.
\end{cor}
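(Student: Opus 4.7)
\textbf{Proof proposal for Corollary \ref{memenorme}.} The plan is a standard compactness argument built on top of Lemma \ref{indep}. Both the upper and lower bounds will be obtained by minimizing/maximizing a continuous function of $(\Lambda, \vartheta)$ over a compact set, with the lower bound being nonzero thanks to the rank computation of Lemma \ref{indep}.

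First I would set $\vartheta = (\theta_1, \ldots, \theta_j)$ and observe that $\vartheta \mapsto A(\vartheta)$ is continuous (indeed polynomial) from $\Theta^j$ to the space of $d \times d$ matrices. Hence
\[
\Phi(\Lambda, \vartheta) = \|A(\vartheta)\Lambda\|
\]
is continuous on $\R^d \times \Theta^j$. The set $\mathcal{D}_\eps$ is closed in the compact set $\Theta^j$, hence compact; and $\{\Lambda \in \R^d : \|\Lambda\| = 1\}$ is compact. So $\Phi$ attains both its maximum $M_\eps$ and its minimum $c_\eps$ on the compact product $\{\|\Lambda\|=1\} \times \mathcal{D}_\eps$. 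The upper bound $M_\eps < \infty$ is immediate, giving $\|A(\vartheta)\Lambda\| \le M_\eps \|\Lambda\|$ for all $\Lambda$ by homogeneity (in fact $M_\eps$ can be bounded uniformly in $\eps$, since we don't use the separation for the upper bound).

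For the lower bound, I would argue that $c_\eps > 0$. Suppose $c_\eps = 0$. Then there is a pair $(\Lambda^*, \vartheta^*)$ with $\|\Lambda^*\| = 1$, $\vartheta^* \in \mathcal{D}_\eps$, and $A(\vartheta^*)\Lambda^* = 0$. The components of $\vartheta^* = (\theta_1^*, \ldots, \theta_j^*)$ are pairwise $\eps$-separated, hence all distinct, so Lemma \ref{indep} applies: $A(\vartheta^*)$ has rank $d$ and is therefore injective, contradicting $\Lambda^* \ne 0$. Thus $c_\eps > 0$, and homogeneity yields $\|A(\vartheta)\Lambda\| \ge c_\eps \|\Lambda\|$ uniformly in $\vartheta \in \mathcal{D}_\eps$ and $\Lambda \in \R^d$.

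There is no real obstacle here beyond checking the compactness of $\mathcal{D}_\eps$ and invoking Lemma \ref{indep} at the right point; the argument is essentially a continuity-on-a-compact-set routine, with the non-vanishing of the minimum provided by the full-rank statement already proved.
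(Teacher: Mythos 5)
Your proof is correct and follows essentially the same route as the paper: continuity of $(\Lambda,\vartheta)\mapsto\|A(\vartheta)\Lambda\|$ on the compact set $\{\|\Lambda\|=1\}\times\mathcal{D}_\eps$, positivity of the attained minimum via the full-rank statement of Lemma~\ref{indep}, and homogeneity to pass to general $\Lambda$. Nothing to add.
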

\begin{proof}
 Note that the norm $\|A(\theta_1,\ldots,\theta_j) \Lambda\|$
 is a continuous function of $((\theta_1,\ldots,\theta_j),\Lambda)$ on
 the compact space $\mathcal{D}_\eps\times S(0,1)$ where $S(0,1)$ is
 the  $d$-dimensional unit sphere.  Its infimum and supremum are attained on
 $\mathcal{D}_\eps\times S(0,1)$, say at $\left((\theta _{*i})_{1\le i
   \le j},\Lambda_*\right)$ and $\left((\theta _{i}^*)_{1\le i
   \le j},\Lambda^*\right)$ . Now, by Lemma \ref{indep},
$c_*(\eps)=\|A(\theta_{*1},\ldots,\theta_{*j}) \Lambda_*\|$  and $c^*(\eps)=\|A(\theta_{1}^*,\ldots,\theta_{j}^*) \Lambda^*\|$ are positive so that $ c_* \left\lVert
   \Lambda  \right\rVert \le  \left\lVert A(\theta_1,\ldots,\theta_j)
   \Lambda  \right\rVert\le c^* \left\lVert
   \Lambda  \right\rVert $
 for every $\Lambda$ and every $(\theta _i)_{1\le i\le j}$ in $\mathcal{D}_\eps$ .
\end{proof}

\section{Wasserstein distance and mixture on the tree $\T$}
\subsection{Key lemmas~\ref{scW} and \ref{lemrec}}
 Set  for any function $f$ on $\Theta$ and any $J\subset J_o$
\begin{equation}
  \label{eq:not}
  f(J)=\sum_{j\in J}\pi_{j,n} f\left(\theta_{j,n}\right).
\end{equation}
In particular for $f(\cdot)=F(x,\cdot)$, we have 
\[F(x,J)=\sum_{j\in J}\pi_{j,n}F(x,\theta_{j,n}),\]
so that $F(x,\Delta G_n)=F(x,J_o)$.  
 Set also for short
 \begin{equation}
   \label{eq:piJ}
 \pi(J)=\sum_{j\in J}\pi_j. 
 \end{equation}

\underline{Proof of Lemma~\ref{scW}}. With the above notations, we have to show \eqref{scaleW}.


In what follows $n$ is fixed
  and thus skipped in the $\theta_j$'s, $\pi_j$'s and $\eps_s$'s. For each distinct $J$, we pick an arbitrary
  $j\in J$ and set $\theta_J=\theta_j$.  Let $f$ be $1$-Lipschitz on $\Theta$. We first prove by recurrence
  that for any node $J$ of the tree,
\begin{equation}
\label{eq:recfj}
 f(J)\lec \pi(J)f(\theta_J)+\max_{I\in\mathrm{Desc}(J)}\eps_{\fs_\pi(I)}\eps_{\fs(\parent{I})}.
\end{equation}
If $J$ has $\fs$-diameter zero, then $f(J)=\pi(J)f(\theta_J)$ and
\eqref{eq:recfj} is satisfied. Next, if $J$ has children $J_1$ that satisfy \eqref{eq:recfj}, we compute 
\begin{eqnarray*}
   f(J) &=& \sum_{J_1\in \C{J}}f(J_1)\\
   &\lec & \sum_{J_1\in \C{J}}\left[\pi(J_1)f(\theta_{J_1})+\max_{I\in \D{J_1}}\eps_{\fs_\pi(I)}\eps_{\fs(\parent{I})}\right]\\
&\le & \pi(J)f(\theta_J) +\!\!\!\!\sum_{J_1\in
  \C{J}}\!\!\!|\pi(J_1)|\underbrace{|f(\theta_{J_1})-f(\theta_J)|}_{\le
    |\theta_{J_1}-\theta_J|} +\!\!\!\max_{I\in
    \D{J_1}}\eps_{\fs_\pi(I)}\eps_{\fs(\parent{I})}.
\end{eqnarray*}
Since $|\pi(J_1)|$ is of order $\eps_{\fs_\pi(J_1)} $ and
$|\theta_{J_1}-\theta_J|$ is of order $\eps_{\fs(J_1)}$ we see that \eqref{eq:recfj} holds for $J$ and in particular for $J_o$
where $\pi(J_o)=0$.

To prove the reverse inequality, let $J\subsetneq J_o$ such that $\eps_{\fs_\pi(J)}\eps_{\fs(\parent{J})}$ is
maximal. Set 
\[\fe(J)=\min_{j\notin J}|\theta_{j}-\theta_J|\quad\text{ and } \quad\fd(J)=\max_{j\in J}|\theta_{j}-\theta_J|\]
so that $\fe(J)\ge \fd(J)$. Consider the following $1$-Lipschitz
function $f$ on $\Theta$ 
\[f(\theta)=-\mathrm{sgn}(\pi(J))\times\min\{\fe(J)-\fd(J),[|\theta-\theta_J|-\fd(J)]_+\}\] 
so that 
\[f(J)=0 \quad\text{ and } \quad f(J_o) = f(J_o\setminus J)= |\pi(J)|[\fe(J)-\fd(J)].\]
Since  $|\pi(J)|$ is of order $\eps_{\fs_\pi(J)}$ and $\fe(J)$
is at least of order $\eps_{\fs(\parent{J})}$ and $\fd(J)$
is of order $\eps_{\fs(J)}$, we deduce
\begin{equation*}
 f(J_o)\gec\max_{J\in
    \D{J_o}}\eps_{\fs_\pi(J)}\eps_{\fs(\parent{J})}.
\end{equation*}  
It remains to note that  $W\left(\Delta G_n\right)=\sup_{\Vert f\Vert_{\text{Lip}}\le 1}f(J_o)$.
\vspace{1em}

\underline{Proof of Lemma~\ref{lemrec}}. We shall use  Taylor expansions along the tree $\T$ to express the order of $F(x,\Delta G_n)$ in terms of the scaling functions $\eps_s(n)$.
Recall Assumption B($k$) in the main paper: the densities family $\{f(\cdot,\theta),\theta\in\Theta\}$ satisfies, with $F(x,\theta)=\int_{-\infty}^xf(\cdot,\theta)\dd\lambda$,
\begin{itemize}
\item $\{F(\cdot,\theta),\theta\in\Theta\}$ is $k$-strongly identifiable,
\item For all $x$, $F(x,\theta)$ is $k$-differentiable w.r.t. $\theta$,
\item There is a uniform continuity modulus $\omega(\cdot)$ such that 
\[\sup_x\big|F^{(k)}(x,\theta_2)-F^{(k)}(x,\theta_1)\big|\le \omega(\theta_2-\theta_1)\]
with $\lim_{h\to 0}\omega(h)=0$.
\end{itemize}

Recall notations \eqref{eq:not} and \eqref{eq:piJ}. If $J$ is an end of the
tree $\T$, then it satisfies
$\fs(J)=0$, all the $\theta_j$ for $j\in J$ are equal, and
$F(x,J)=\pi(J)F(x,\theta_J)$. In this case, the choices $a_{J}(k)=\pi(J)\1_{\{k=0\}}$ and $R(x,J)=0$ work. 

\emph{Assume now that Lemma~\ref{lemrec} holds for any node $I$ with
  parent $J=\parent{I}$ in the tree $\T$.}   We want to pass the
estimates of $I$ to the parent $J$. By assumption on $I$,
    \begin{equation}
      \label{eq:FxI}
       F(x,I) -R(x,I) =  \sum_{\ell=0}^{2m} a_{I}(\ell) \eps_{\fs(I)}^\ell  F^{(\ell)}(x, \theta _I). 
    \end{equation}
    Assuming without loss of generality that $\theta_J\le \theta_I$, we apply
 Taylor's formula with remainder to $F^{(\ell)}(x, \theta _I)$ at $\theta_J$ and obtain
 \[F^{(\ell)}(x, \theta_I)-\!\!\sum_{k=\ell}^{2m-1}\frac{(\theta_I-\theta_J)^{k-\ell}}{(k-\ell)!}
F^{(k)}(x,\theta_J) = \!\! \int_{\theta_J}^{\theta_I}\!\! \frac{(\theta_I-\xi)^{2m-1-\ell}}{(2m-1-\ell)!}F^{(2m)}(x, \xi)d\xi . \] 
So that using Assumption~B(2m),
\begin{align*}
F^{(\ell)}(x, \theta_I)-&\sum_{k=\ell}^{2m}\frac{(\theta_I-\theta_J)^{k-\ell}}{(k-\ell)!}
F^{(k)}(x,\theta_J)  \\
&= \int_{\theta_J}^{\theta_I} \frac{(\theta_I-\xi)^{2m-1-\ell}}{(2m-1-\ell)!}\left[F^{(2m)}(x, \xi)-F^{(2m)}(x,\theta_J)\right]d\xi\\ 
&=\frac{(\theta_I-\theta_J)^{2m-\ell}}{(2m-1-\ell)!}\, O\left(  \sup_{\xi\in
    [\theta_J,\theta_I]}|F^{(2m)}(x,\xi)-F^{(2m)}(x,\theta_J)|\right)\\
&=o\left((\theta_I-\theta_J)^{2m-\ell}\right),
\end{align*}
and by setting
\begin{equation}
  \label{etalIhI}
  \vartheta_I   =     \frac{\theta_I - \theta
    _J}{\eps_{\fs(J)}}\quad\text{and}\quad a'_I(\ell) =  a_I(\ell) \left( \frac{\eps_{\fs(I)}}{\eps_{\fs(J)}} \right)^\ell, 
\end{equation}
we obtain
\begin{equation*}
F^{(\ell)}(x, \theta_I)=  \sum_{k=\ell}^{2m}\eps_{\fs(J)}^k\frac{\vartheta_I^{k-\ell}}{(k-\ell)!}F^{(k)}(x,\theta_J)+\eps_{\fs(J)}^{2m-\ell}o(1),
\end{equation*}
and substituting in \eqref{eq:FxI} and changing the order of  summation, we get
\begin{align*}
  F(x,I) -R(x,I)
 = \sum_{k=0}^{2m} \eps_{\fs(J)}^k F^{(k)}(x, \theta_J)\sum_{\ell=0}^{k}&
  a'_I(\ell)\frac{\vartheta_I^{k-\ell}}{(k-\ell)!}\\
& + \eps_{\fs(J)}^{2m}\, \max_{\ell\le 2m}|a'_I(\ell) | \,o(1).\\
\end{align*}
Adding up over the children $I$ of $J$,  we obtain 
\begin{equation*}
  F(x,J)=\sum_{k=0}^{2m}a_J(k)\eps_{\fs(J)}^k F^{(k)}(x, \theta_J)+R(x,J) ,
\end{equation*}
with 
\begin{eqnarray}
\label{etakJ}
  a_J(k)&=&\sum_{I\in \mathrm{Child}(J)}\sum_{\ell=0}^{k}
  a'_I(\ell)\frac{\vartheta_I^{k-\ell}}{(k-\ell)!},\\
\label{RxJ}
  R(x,J)&=&\sum_{I\in \mathrm{Child}(J)}\left[\eps_{\fs(J)}^{2m}\, \max_{\ell\le 2m}|a'_I(\ell) | \,o(1) + R(x,I)\right].
\end{eqnarray}
 
\emph{Proof of (\ref{coeffsbornes}) for the node $J=\parent{I}$.} From
 \eqref{etakJ} for $k=0$ and \eqref{etalIhI} and recurrence hypothesis on $I$, we have 
\[a_J(0)=\sum_{I\in \mathrm{Child}(J)}a'_{I}(0)=\sum_{I\in \mathrm{Child}(J)}a_I(0)=\sum_{I\in \mathrm{Child}(J)}\sum_{j\in I}\pi_j=\sum_{j\in J}\pi_j.\]
Moreover, since $|\vartheta_I|\lec 1$
for each child $I$ of $J$, Equation \eqref{etakJ} yields 
\[|a_J(k)|\lec \max_{\substack{\ell\le k \\ I\in\mathrm{Child}(J)}} |a'_{I}(\ell)|.\]
 Furthermore, from \eqref{etalIhI} we have $|a'_{I}(\ell)| \le |a_{I}(\ell)|$ since $\eps_{\fs(I)}\le
 \eps_{\fs(J)}$. And by assumption on $I$, we have $|a_I(\ell)|\lec 1$ so that
 $|a'_I(\ell)|$ and thus $|a_{J}(k)|$ are  also of order one and (\ref{coeffsbornes}) is established. 

 \emph{We turn to the proof of  (\ref{premierscoeffs}) for $J=\parent{I}$.} The first step is to show that
\begin{equation}
  \label{eq:applilm}
  \max_{k<d_J}|a_J(k)|\asymp \max_{\substack{\ell< d_I \\ I\in \mathrm{Child}(J)}} |a'_I(\ell)|.
\end{equation}
>From \eqref{etakJ}, write $a_J(k)=a^{(1)}_J(k)+a^{(2)}_J(k)$ with 
\begin{eqnarray}
\label{a1} a^{(1)}_J(k) &=&\sum_{I\in \C{J}}\sum_{\ell=0}^{d_I-1}a'_I(\ell)\frac{\vartheta_I^{k-\ell}}{(k-\ell)!}\1_{k\ge \ell} ,\\
\label{a2} a^{(2)}_J(k) &= & \sum_{I\in \C{J}}\sum_{\ell=d_I}^{k}a'_I(\ell)\frac{\vartheta_I^{k-\ell}}{(k-\ell)!}\1_{k\ge \ell}.
\end{eqnarray}
For any two distinct children $I$ and $I'$ of $J$, \eqref{etalIhI} gives
\begin{equation}
\label{sep}
|\vartheta_{I} - \vartheta_{I'}| = \eps_{\fs(J)}^{-1} |\theta_{I} - \theta _{I'}|\asymp 1,
\end{equation}
so that $\{\vartheta_I\}_{I\in\C{J}}$ is $\eps$-separated for some
$\eps>0$. Hence, by Corollary~\ref{memenorme}, if we set $\Lambda=\left(a'_I(\ell)\right)_{0\le \ell\le d_I-1}$, we get 
\[\max_{k<d_J}\left|a^{(1)}_J(k)\right|\asymp \max_{\substack{\ell< d_I \\ I\in \C{J}}} |a'_I(\ell)|.\]
Now, to obtain \eqref{eq:applilm}, we see from \eqref{etakJ} that it's enough to show 
\begin{equation}
  \label{eq:negl}
\max_{k<d_J}\left|a^{(2)}_J(k)\right|=o\left(\max_{\ell< d_I} |a'_I(\ell)|\right). 
\end{equation}
Since $|\vartheta_I|\lec 1$, we have from \eqref{a2}
\begin{equation}
  \label{eq:gro}
 \left|a^{(2)}_J(k)\right|\lec \max_{d_I\le \ell\le k} |a'_I(\ell)|.
\end{equation}
By assumption on $I$, we also have $\|a_I\|\asymp \max_{\ell< d_I} |a_I(\ell)|$, so that
\begin{eqnarray*}
 \frac{\eps_{\fs(I)}}{\eps_{\fs(J)}}\cdot \max_{\ell< d_I}
  |a'_I(\ell)| =\max_{\ell< d_I}
  |a_I(\ell)|\left(\frac{\eps_{\fs(I)}}{\eps_{\fs(J)}}\right)^{\ell +1}& \gec & \|a_I\|\left(\frac{\eps_{\fs(I)}}{\eps_{\fs(J)}}\right)^{d_I} \\
&\ge & \max_{d_I\le \ell\le k} |a'_I(\ell)| ,
\end{eqnarray*}
where the last inequality comes from \eqref{etalIhI}. Thus,  
\begin{equation}
  \label{eq:pto}
 \max_{d_I\le \ell\le k} |a'_I(\ell)|= o\left(\max_{\ell< d_I} |a'_I(\ell)|\right) ,
\end{equation}
so that \eqref{eq:gro} and \eqref{eq:pto} yield \eqref{eq:negl} and \eqref{eq:applilm} is proved. 

The second step is to prove 
\begin{equation}
  \label{equi}
 \|a_J\|\asymp \max_{k< d_J} |a_J(k)|. 
\end{equation}
The non-trivial part is $\|a_J\|\lec \max_{k< d_J}
|a_J(k)|$;  it is equivalent to show 
\[\max_{k\ge d_J} |a_J(k)|\lec  \max_{k< d_J} |a_J(k)|.\]
 By the definition \eqref{etakJ} of $a_J(k)$,
\eqref{eq:pto} and \eqref{eq:applilm}, we have 
\begin{eqnarray*}
\max_{k\ge d_J} |a_J(k)|  &\lec  &  \max_{k\ge d_J}\sum_{I\in
                                      \C{J}}\max_{\ell\le k}
                                      |a'_I(\ell)|\\
& \lec  &  \sum_{I\in \mathrm{Child}(J)}\max_{\ell<d_I} |a'_I(\ell)| \lec   \max_{\substack{\ell< d_I \\ I\in \mathrm{Child}(J)}} |a'_I(\ell)| \lec  \max_{k< d_J} |a_J(k)|.
\end{eqnarray*}
The proof of (\ref{premierscoeffs}) is complete.

\emph{We turn to the proof of (\ref{borneinfcoeff}) for $J=\parent{I}$.} From \eqref{equi},
\eqref{eq:applilm} and \eqref{etalIhI}, we get 
\begin{eqnarray}
  \|a_J\| \gec \max_{k< d_J} |a_J(k)| \gec \max_{\substack{\ell< d_I \\ I\in \mathrm{Child}(J)}}|a'_I(\ell)|
&\gec &\max_{\substack{\ell< d_I \\ I\in \mathrm{Child}(J)}}|a_I(\ell)|\left(\frac{\eps_{\fs(I)}}{\eps_{\fs(J)}}\right)^\ell\nonumber\\
&\gec &\max_{I\in\mathrm{Child}(J)}\|a_I\|\left(\frac{\eps_{\fs(I)}}{\eps_{\fs(J)}}\right)^{d_J-1}.\label{mino}
\end{eqnarray}
Moreover (\ref{borneinfcoeff}) for $I\in\C{J}$ gives since $d_I\le d_J$
\begin{eqnarray*}
  \|a_I\|\left(\frac{\eps_{\fs(I)}}{\eps_{\fs(J)}}\right)^{d_J-1} &\gec&
\max_{I'\in\D{I}}\eps_{\fs_\pi(I')}\left(\frac{\eps_{\fs(\parent{I'})}}{\eps_{\fs(I)}}\right)^{d_I-1}\left(\frac{\eps_{\fs(I)}}{\eps_{\fs(J)}}\right)^{d_J-1}\\
&\gec& \max_{I'\in\D{I}}\eps_{\fs_\pi(I')}\left(\frac{\eps_{\fs(\parent{I'})}}{\eps_{\fs(J)}}\right)^{d_J-1}.\\
\end{eqnarray*}
In addition, (\ref{coeffsbornes}) implies $\|a_J\|\gec |a_J(0)|=|\pi(J)|\asymp
\eps_{\fs_\pi(J)}$ and similarly, from \eqref{eq:applilm}, \eqref{equi} and (\ref{coeffsbornes}) for
$I$, $\|a_J\|\gec |a'_I(0)|=|a_I(0)|=|\pi(I)|\asymp
\eps_{\fs_\pi(I)}$ so that (\ref{borneinfcoeff}) is established for $J$.

\emph{We finally prove (\ref{remaind}) for $J=\parent{I}$.} From \eqref{RxJ}, \eqref{eq:pto}, assumption (\ref{remaind}) for $I$ and \eqref{eq:applilm}, we have 
\begin{eqnarray*}
  R(x,J) &\lec & \max_{I\in\mathrm{Child}(J)}\left[\eps_{\fs(J)}^{2m}\, \max_{\ell\le
  2m}|a'_I(\ell) | \,o(1) + R(x,I)\right]\\
 &\lec & \max_{I\in\mathrm{Child}(J)}\left[\eps_{\fs(J)}^{2m}\max_{\ell<d_I}|a'_I(\ell) | \,o(1) + o\left(\|a_I\|\eps_{\fs(I)}^{2m}\right)\right]\\
 &\lec &\eps_{\fs(J)}^{2m}\|a_J\| \,o(1)+\max_{I\in\mathrm{Child}(J)}o\left(\|a_I\|\eps_{\fs(I)}^{2m}\right),
\end{eqnarray*}
and in addition, for each child $I$ of $J$, from \eqref{mino},
\begin{equation*}
  \|a_I\|\eps_{\fs(I)}^{2m}  = \|a_I\|\eps_{\fs(I)}^{d_I-1}\eps_{\fs(I)}^{2m+1-d_I}\lec  \|a_J\|\eps_{\fs(J)}^{d_I-1}\eps_{\fs(I)}^{2m+1-d_I}\le  \|a_J\|\eps_{\fs(J)}^{2m},
\end{equation*}
and we are done.

\emph{The proof of (\ref{epsep})} is already established in \eqref{sep}.

\def\x{\ref{sec:introduction}}
\section{From local to global: Theorem~\MakeLowercase{\ref{orders}.\ref{local}} implies
Theorem~\MakeLowercase{\ref{orders}.\ref{global}}} 
\label{closeinspec} Set 
\[L=\inf_{\substack{G_1,G_2\in\Glm\\G_1\ne G_2}} \frac{\left\lVert F(\cdot, G_1) - F(\cdot, G_2) \right\rVert _\infty}{W(G_1, G_2)^{2m-1}}\]
and consider a sequence
  $(G_{1,n},G_{2,n})$ in $\Glm^2$ with $G_{1,n}\ne G_{2,n} $ for
  each $n$ and such that 
  \begin{equation}
\label{mi}
   \frac{\left\lVert F(\cdot, G_{1,n}) - F(\cdot, G_{2,n}) \right\rVert
     _\infty}{W(G_{1,n}, G_{2,n})^{2m-1}}\xrightarrow[n\to \infty]{}L .
  \end{equation}
We can assume that 
$(G_{1,n},G_{2,n})$ converges to some limit $(G_{1,\infty},G_{2,\infty})$ in the compact set
$\Glm^2$. Set  $w=W(G_{1,\infty}, G_{2,\infty})$, $\Delta G_n=G_{1,n}-
G_{2,n}$ and distinguish  two cases : $w>0$ and $w=0$. 

If $w>0$, by identifiability, there is a $x_0$ such that
$\delta_0=|F(x_0,G_{1,\infty})-F(x_0,G_{2,\infty})|>0$. Then, for all $n$
\begin{equation}
  \label{eq:1}
 \frac{\left\| F(\cdot, \Delta G_n) \right\|_\infty}{W(\Delta
   G_n)^{2m-1}}\ge \frac{\left| F(x_0, \Delta G_n) \right|}{W(\Delta G_n)^{2m-1}}.
\end{equation}
By assumption, $W(\Delta G_n)$ tends to $w$. Moreover, the numerator
of the r.h.s. of \eqref{eq:1} tends to $\delta_0$ since the function $\theta\mapsto
F(x_0, \theta)$ is $K_{x_0}$-Lipschitz  with $K_{x_0}=\max_{\theta\in
  \Theta}|F^{(1)}(x_0,\theta)|$.  As a
consequence,  \eqref{eq:1} and \eqref{mi} give
Theorem~\ref{orders}.\ref{global}.
 by choosing $\delta=\delta_0/w^{2m-1}$.
  
If  now $w=0$, set $G_0= G_{1,\infty}$
which is in $\mathcal{G}_{m_0}$ with some  $m_0$ at most $m$. Consider $\eps
>0$ and $\delta>0$ as defined in Theorem~\ref{orders}.\ref{local}. ; for $n$ large enough,
say $n\ge n_0$, $W(G_{i,n},G_0)$, $i=1,2$, are less than $\eps$ so that 
\[\inf_{n\ge n_0}\frac{\left\lVert F(\cdot,\Delta G_n) \right\rVert
  _{\infty}}{W(\Delta G_n)^{2m -2m_0+1}}>\delta.\]
Moreover, for $n$ large enough, say $n \ge n_1$, $ W(\Delta G_n)$
is smaller than one  so that 
$W(\Delta G_n)^{2m -2m_0+1}$ is more than $ W(\Delta G_n)^{2m-1}$
and thus for all $n\ge n_0+n_1$,
\begin{equation*}
 \frac{\left\lVert F(\cdot, \Delta G_n)
   \right\rVert _{\infty}}{W(\Delta G_n)^{2m-1}}\ge  \inf_{n\ge
   n_0+n_1}\frac{\left\lVert F(\cdot, \Delta G_n)\right\rVert
   _{\infty}}{W(\Delta G_n)^{2m -2m_0+1}}> \delta
\end{equation*}
which gives $L\ge \delta$ in the limit and Theorem~\ref{orders}.\ref{global}. in that case.

The proof of Theorem~\ref{orders} is complete.

\section{$(p,q)$-smoothness}
\subsection{Inherited smoothness for mixing distributions}
 Being $(p,q)$-smooth ensures finiteness of similar integrals when some $\theta_j$ are replaced with mixing distributions with components close to the $\theta_j$:
    \begin{prop}
        \label{tversmix}
Assume that the family $\{f(\cdot,\theta),\theta\in\Theta\}$ is $(p, q)$-smooth and let $\eps >0$ as in Definition~\ref{Eia}.\ref{proche}. Let also $\pi_0>0$, $\theta_0\in\Theta$ and positive integers $m, m_0$ with $m\ge m_0$. Define
        mixing distributions
 \[G_n = \sum_{j=1}^m \pi_{j,n} \delta _{\theta _{j,n}}\] 
such that 
\begin{itemize}
\item For all $ j \in\lb m_0,m\rb$, $\theta _{j,n}\xrightarrow[n\to\infty]{} \theta _0$, 
\item For all $n$ large enough, $\sum_{j=m_0}^m \pi_{j,n} \ge  \pi_0$. 
\end{itemize}
Then for any $\theta'$ satisfying  $\left|\theta' - \theta _0 \right| <
\eps / 2$, for any mixing distribution $G$:
        \begin{align}
\label{ub}
            \mathbb{E}_{G}\left|\frac{f^{(p)}(\cdot, \theta')}{f(\cdot,G_n)} \right|^q & \underset{\theta_0,\pi_0}{\lec} 1
        \end{align}
for $n$ large enough. If, in addition, the function $x\mapsto\left|f^{(p)}(x, \theta _0)\right|$ has
nonzero integral under $\lambda$, then  for any mixing distribution $G$,
\begin{align}
\label{lb}
    \mathbb{E}_{G}\left|\frac{f^{(p)}(\cdot, \theta _0)}{f(\cdot,G)} \right|^q & \underset{\theta_0}{\gec} 1.
\end{align}
    \end{prop}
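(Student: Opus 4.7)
The plan is to treat \eqref{ub} and \eqref{lb} separately: for \eqref{ub} I reduce to the boundedness of $E_{p,q}$ on a well-chosen compact set via a pigeonhole argument on the components, and for \eqref{lb} I apply a single Hölder inequality to eliminate the dependence on $G$.

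For \eqref{ub}, the key step is to lower-bound the denominator $f(\cdot, G_n)$ by a single well-chosen component. Since $\sum_{j=m_0}^m \pi_{j,n} \ge \pi_0$ for $n$ large enough, the pigeonhole principle yields an index $j_n \in \lb m_0, m \rb$ with $\pi_{j_n, n} \ge \pi_0/(m - m_0 + 1)$; hence $f(\cdot, G_n) \ge \pi_{j_n, n} f(\cdot, \theta_{j_n, n})$. Writing $\E_G \phi = \int_\Theta \E_\theta \phi \, \dd G(\theta)$ for $\phi(x) = |f^{(p)}(x, \theta')/f(x, G_n)|^q$ and applying this bound yield
\begin{align*}
  \E_G \left| \frac{f^{(p)}(\cdot, \theta')}{f(\cdot, G_n)} \right|^q & \underset{\pi_0, m}{\lec} \int_\Theta E_{p, q}(\theta, \theta', \theta_{j_n, n}) \, \dd G(\theta).
\end{align*}
Since $\theta_{j_n, n} \to \theta_0$, for $n$ large enough one has $|\theta_{j_n, n} - \theta_0| \le \eps/3$; combined with $|\theta' - \theta_0| < \eps/2$ this gives $|\theta' - \theta_{j_n, n}| < \eps$, so that by Definition~\ref{Eia}.\ref{proche} the function $E_{p, q}$ is finite on the compact set
\[
K = \Theta \times \{\theta' \in \Theta : |\theta' - \theta_0| \le \eps/2\} \times \{\theta'' \in \Theta : |\theta'' - \theta_0| \le \eps/3\}.
\]
A continuous map from a compact space into $[0, \infty]$ whose values all lie in $[0, \infty)$ must be bounded, so $E_{p, q} \le M$ on $K$ for some finite $M$, uniformly in $\theta'$, in $G$ and in $n$ large enough; this proves \eqref{ub}.

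For \eqref{lb}, assuming $q \ge 1$ (the range actually used in the paper), Hölder's inequality with conjugate exponents $q$ and $q/(q-1)$ yields
\begin{align*}
  \int |f^{(p)}(x, \theta_0)| \, \dd\lambda(x) & = \int \left| \frac{f^{(p)}(x, \theta_0)}{f(x, G)} \right| f(x, G) \, \dd\lambda(x) \\
  & \le \left( \E_G \left| \frac{f^{(p)}(\cdot, \theta_0)}{f(\cdot, G)} \right|^q \right)^{1/q} \left( \int f(x, G) \, \dd \lambda \right)^{1-1/q}.
\end{align*}
Since $\int f(x, G) \dd\lambda = 1$, raising to the $q$-th power and invoking the hypothesis $\int |f^{(p)}(\cdot, \theta_0)| \dd\lambda > 0$ yield \eqref{lb}, with a constant depending only on $\theta_0$ (through the above integral).

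The main obstacle lies in \eqref{ub}: one must pair the pigeonhole argument (which produces a single component $\theta_{j_n, n}$ close to $\theta_0$ and of weight bounded below) with the $(p, q)$-smoothness hypothesis, choosing the radii $\eps/2$ and $\eps/3$ so that $\theta'$ and $\theta_{j_n, n}$ always lie at distance strictly less than $\eps$. This is exactly what turns the qualitative finiteness/continuity condition in Definition~\ref{Eia} into a quantitative uniform bound on the compact set $K$.
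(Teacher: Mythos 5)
Your proposal is correct, and it reaches the same two pillars as the paper's own proof of Proposition~\ref{tversmix} (reduce the mixture denominator to single-component denominators, then invoke continuity of $E_{p,q}$ plus compactness to get a uniform bound), but the mechanics differ at both steps. For \eqref{ub}, the paper does not pigeonhole a single heavy component: it keeps the whole sub-mixture over $j\in\lb m_0,m\rb$ and applies Jensen's inequality to the convex map $y\mapsto y^{-q}$ with weights $\pi_{j,n}/A$, $A=\sum_{j= m_0}^m\pi_{j,n}$, which yields the constant $C/\pi_0^{q}$ with no dependence on $m-m_0$; your single-component bound is simpler but picks up the extra factor $\left((m-m_0+1)/\pi_0\right)^{q}$, harmless here since $m,m_0$ are fixed in the statement. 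Your explicit compact set $K$ with radii $\eps/2$ and $\eps/3$ is a cleaner write-up of what the paper compresses into ``by $(p,q)$-smoothness and compactness and continuity''; note only that $j_n$ varies with $n$, so the closeness $|\theta_{j_n,n}-\theta_0|\le\eps/3$ should be justified uniformly over the finitely many indices $j\in\lb m_0,m\rb$, which is immediate. For \eqref{lb}, your route is genuinely different: the paper rewrites the expectation as $\int |f^{(p)}(x,\theta_0)|^{q} f(x,G)^{1-q}\,\dd\lambda(x)$, takes a set $B$ of positive measure where $|f^{(p)}(\cdot,\theta_0)|\ge\delta$, and uses a Markov-type bound to show $f(\cdot,G)\le 2/\lambda(B)$ on half of $B$; your H\"older inequality against the probability density $f(\cdot,G)$ is shorter and gives the explicit constant $\left(\int|f^{(p)}(\cdot,\theta_0)|\,\dd\lambda\right)^{q}$, at the price of assuming $q\ge1$ — which matches the range $q\in\lb 1,4\rb$ actually used in Assumption~A, and the paper's own estimate on $f^{1-q}$ likewise only works for $q\ge1$, so nothing is lost. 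Both your argument and the paper's implicitly handle points where the mixture density vanishes via the usual conventions, so this is not a defect specific to your proof.
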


    \begin{proof}
      For large $n$ and all $ j\in\lb m_0,m\rb$, we have
      $\left|\theta _{j,n} - \theta'\right|< \eps$ for all $\theta'$
      such that $\left|\theta' - \theta _0 \right| < \eps / 2$. For
      all such $(j,n)$ and all $\theta $, by $(p,q)$-smoothness and compactness and continuity, there is a finite $C$ such that  
\[\mathbb{E}_{\theta}\left|\frac{f^{(p)}(\cdot,
    \theta')}{f(\cdot, \theta _{j,n})} \right|^q \le C.\]
Since $f(x,G)$ is a convex
 combination of some $f(x,\theta)$, we may replace $\E_{\theta}$ by
 $\E_G$ in the former expression. Since the function $1/y^q$ is
 convex on positive reals, by Jensen inequality, setting
 $A=\sum_{j=m_0}^m \pi_{j,n}$, 
 \[ \sum_{j=m_0}^m \frac{\pi_{j,n}}{A} \left| \frac{f^{(p)}(x, \theta')}{f(x,
     \theta _{j,n})} \right|^q\ge\left|
   \frac{f^{(p)}(x, \theta')}{\sum_{j=m_0}^m \frac{\pi_{j,n}}{A}f(x, \theta _{j,n})}\right|^q \ge  A^q\left|
   \frac{f^{(p)}(x, \theta')}{f(x, G_{n})}\right|^q,\]
and taking expectations with respect to $G$ we obtain the upper bound:
\[\E_G\left|
   \frac{f^{(p)}(\cdot, \theta')}{f(\cdot, G_{n})}\right|^q\le \frac{C}{A^q}\le \frac{C}{\pi_0^q}.\]

The lower bound does not depend on $(p, q )$-smoothness. It is a simple consequence of rewriting:
        \begin{align*}
            \mathbb{E}_{G}\left|\frac{ f^{(p)}(\cdot, \theta _0)}{f(\cdot,G)} \right|^q & =  \int\left|\frac{ f^{(p)}(x, \theta _0)^q}{f(x,G)^{q-1}} \right|   \dd\lambda(x).     
        \end{align*}
By assumption, there is a
        set $B$ of measure $\lambda(B)=M>0$ on which the function $f^{(p)}(x, \theta
        _0)$ is more than some $\delta >0$. Now, for $M$ small enough,
 the set $B\cap
        \{f(x,G)\le 2/M\}$ is of measure at least $M/2$ and thus 
\[\int\left|\frac{ f^{(p)}(x, \theta _0)^q}{f(x,G)^{q-1}} \right|
\dd\lambda(x)     \ge \left[\frac{M}{2}\right]^{q+1} \delta^q.\]
\end{proof}

\subsection{ \MakeLowercase{$(p,q)$}-smoothness of exponential families}
\label{exp_smooth}

Given our definition of $(p,q)$-smoothness, it only makes sense to consider one-parameter one-dimensional families. However, generalisation to higher dimensions should be easy.

Let us consider an exponential family with natural parameter $\theta\in \Theta _0 \subset \R$, so that 
\[f(x, \theta ) = h(x) g(\theta) \exp(\theta T(x)),\]
with $g \in C^{\infty}$ and a sufficient one-dimensional statistic $T(x)$. Consider $\Theta $ such that its $\varepsilon $-neighbourhood $\Theta\oplus B(0, \varepsilon )$ is included in $\Theta_0 $. Then $\left\{ f(\cdot, \theta), \theta \in \Theta  \right\} $ is $(p,q)$-smooth for any $p$ and $q$. Indeed,
        \begin{eqnarray*}
          f^{(p)}(x, \theta') & = &
                                      h(x)\e^{\theta'T(x)}\left[\sum_{k=0}^p\binom{p}{k}g^{(k)}(\theta')T^{p-k}(x)\right]\\
\frac{ f^{(p)}(x, \theta')}{ f(x, \theta'')}& = &
                                      \frac{\e^{(\theta'-\theta'')T(x)}}{g(\theta'')}\left[\sum_{k=0}^p\binom{p}{k}g^{(k)}(\theta')T^{p-k}(x)\right]\\
\left|\frac{ f^{(p)}(x, \theta')}{ f(x, \theta'')}\right|^q& = &
                                     \frac{\e^{q(\theta'-\theta'')T(x)}}{g^q(\theta'')}\left|\sum_{k=0}^p\binom{p}{k}g^{(k)}(\theta')T^{p-k}(x)\right|^q
\end{eqnarray*}
so that we get from \eqref{Epq}
\[E_{p,q}(\theta, \theta', \theta'')  = \frac{g(\theta) \E_{\theta+q(\theta'-\theta'')}\left|\sum_{k=0}^p\binom{p}{k}g^{(k)}(\theta')T^{p-k}(\cdot)\right|^q}{g^q(\theta'')g(\theta+q(\theta'-\theta''))}.\]
        
        Since all the moments of the sufficient statistic $T(x)$ are finite under a distribution in the exponential family, and since $\theta + q\theta' - q \theta''$ is in $\Theta _0$ for $ (\theta' -  \theta'') < \eps/q$, we obtain the finiteness of $E_{p,q } (\theta, \theta', \theta'') $. Continuity is clear.

\section{Jacobian calculus}
\label{jaco}
The map 
\[\phi :(\pi_1,\ldots,\pi_d,\theta_1,\ldots,\theta_d)\mapsto 
\left(\sum_1^d\pi_j,\sum_1^d\pi_j\theta_j,\sum_1^d\pi_j\theta_j^2,\ldots,\sum_1^d\pi_j\theta_j^{2d-1}\right)\]
defined on $\R^{2d}$ has the following Jacobian : 
\[
J(\phi)=(-1)^{\frac{(d-1)d}{2}}\,\pi_1\cdots\pi_d \prod_{1\le j<k\le d}(\theta_j-\theta_k)^4.  \]
To prove this, note that
\begin{equation*}
 J(\phi) = 
\begin{vmatrix}
 1 & \cdots & 1 & 0 & \cdots & 0\\
\theta_1 &  \cdots & \theta_d & \pi_1& \cdots & \pi_d\\
\theta_1^2 &  \cdots & \theta_d^2 & 2\pi_1\theta_1& \cdots & 2\pi_d\theta_d\\
\vdots &            & \vdots & \vdots &        & \vdots \\ 
\theta_1^{2d-1} &  \cdots & \theta_d^{2d-1} & (2d-1)\pi_1\theta_1^{2d-2}& \cdots & (2d-1)\pi_d\theta_d^{2d-2}
\end{vmatrix}
\end{equation*}
so that $J(\phi) =\pi_1\cdots\pi_d \,\Delta_d $ with 
\begin{equation*}
\Delta_d = \begin{vmatrix}
 1 & \cdots & 1 & 0 & \cdots & 0\\
\theta_1 &  \cdots & \theta_d & 1& \cdots & 1\\
\theta_1^2 &  \cdots & \theta_d^2 & 2\theta_1& \cdots & 2\theta_d\\
\vdots &            & \vdots & \vdots &        & \vdots \\ 
\theta_1^{2d-1} &  \cdots & \theta_d^{2d-1} & (2d-1)\theta_1^{2d-2}& \cdots & (2d-1)\theta_d^{2d-2}
\end{vmatrix}.
\end{equation*}
Note that, if $P$ is any  polynomial of degree $2d-1$ with leading
coefficient one, the last row of $\Delta_d$ can be replaced by 
\[ [ P(\theta_1)   \cdots  P(\theta_d)\  P'(\theta_1) \cdots  P'(\theta_d)],\]
and choosing 
$P(\theta)=(\theta-\theta_d)\prod_{1\le j\le d-1}(\theta-\theta_j)^2$, we get 
\begin{equation*}\Delta_d=P'(\theta_d) \begin{vmatrix}
 1 & \cdots & 1 & 0 & \cdots & 0\\
\theta_1 &  \cdots & \theta_d & 1& \cdots & 1\\
\theta_1^2 &  \cdots & \theta_d^2 & 2\theta_1& \cdots & 2\theta_{d-1}\\
\vdots &            & \vdots & \vdots &        & \vdots \\ 
\theta_1^{2d-2} &  \cdots & \theta_d^{2d-2} & (2d-2)\theta_1^{2d-3}& \cdots & (2d-2)\theta_{d-1}^{2d-3}
\end{vmatrix}.
\end{equation*}
Again, if $Q$ is any  polynomial of degree $2d-2$ with leading
coefficient one, the last row  can be replaced by 
\[ [ Q(\theta_1)   \cdots  Q(\theta_d)\  Q'(\theta_1) \cdots  Q'(\theta_{d-1})],\]
and choosing  $\displaystyle Q(\theta)=\prod_{1\le j\le
  d-1}(\theta-\theta_j)^2$, we obtain the recurrence formula
\begin{equation*}
\Delta_d=(-1)^{d-1} P'(\theta_d) Q(\theta_d)\Delta_{d-1}=(-1)^{d-1} \prod_{j=1}^{d-1}(\theta_d-\theta_j)^4 \Delta_{d-1}.
\end{equation*} 
By iteration, we get 
\begin{eqnarray*}
 \Delta_d 
&=&
(-1)^{d-1+d-2+\cdots+1}\prod_{k=2}^d\prod_{j=1}^{k-1}(\theta_k-\theta_j)^4
\Delta_1 \\
&=&
(-1)^{\frac{(d-1)d}{2}}\prod_{1\le j<k\le d}(\theta_k-\theta_j)^4
\end{eqnarray*}
since $\Delta_1=1$. The proof is complete.

\section{Taylor expansions and $L^p$-convergences in the proof of Theorem~\ref{LAN}}
\subsection*{The $Y_{i,n} (u)$'s and $Z_{i,n}$'s are centered}
 Recall the definition \eqref{mixdist} of $G_n(u)$ and set for short 
\[\theta_{j,n}=\theta_{m_0}+n^{-1/(4d-2)}h_j(u)\]
with $d=m-m_0+1$. By definition of the mixtures, we have
\begin{equation*}
f\left(x,G_n(u)\right)-f\left(x,G_0\right)=\pi_{m_0}\sum_{j=m_0}^m\pi_{j,n}(u)\left[f(x,\theta_{j,n}(u))-f(x,\theta_{m_0})\right],
\end{equation*}
 and by Taylor expansion with remainder, 
 \begin{align*}
  f(x,\theta_{j,n}(u))-f(x,\theta_{m_0})=\sum_{k=1}^{2d-1}&\left(\frac{h_j(u)}{n^{1/(4d-2)}}\right)^kf^{(k)}(x,\theta_{m_0})\\
 &+\int_{\theta_{m_0}}^{\theta_{j,n}(u)}f^{(2d)}(x,\theta)\frac{(\theta_{j,n}(u)-\theta)^{2d-1}}{(2d-1)!}\dd\theta,
 \end{align*}
 so that  we get by linearity
 \begin{align}  \label{FxGnu}\frac{f(x,G_n(u))-f(x,G_0)}{\pi_{m_0}}=\sum_{k=1}^{2d-1}\frac{\mu_k}{n^{k/(4d-2)}}f^{(k)}(x,\theta_{m_0})+R_n(x,u)
 \end{align}
 with
 \begin{equation*}
  R_n(x,u)=\sum_{j=m_0}^m\pi_{j,n}(u) \int_{\theta_{m_0}}^{\theta_{j,n}(u)}f^{(2d)}(x,\theta)\frac{(\theta_{j,n}(u)-\theta)^{2d-1}}{(2d-1)!}\dd\theta.
 \end{equation*}
 Since the moments $\mu_1,\ldots,\mu_{2d-2}$ that do not depend on $u$
 but $\mu_{2d-1}=u$, substracting \eqref{FxGnu}  with $u=0$  from
 \eqref{FxGnu} yields
 \begin{equation*}
  \frac{f(x,G_n(u))-f(x,G_n(0))}{\pi_{m_0}}=\frac{u}{n^{1/2}}f^{(2d-1)}(x,\theta_{m_0})+R_{n}(x,u)-R_{n}(x,0).
 \end{equation*}
 Dividing by $f(x,G_n(0))$, recalling \eqref{Zin} and setting 
 \begin{equation*}
  R_{i,n}(u)=\frac{R_n(X_{i,n},u)}{f(X_{i,n}, G_n(0))}, 
 \end{equation*}
 we see that \eqref{Yinu} can be written as
 \begin{equation*}
  Y_{i,n}(u)= \pi_{m_0}\left[u n^{-1/2}Z_{i,n}+R_{i,n}(u)-R_{i,n}(0)\right].
 \end{equation*}
 Moreover, for each fixed $n$ and $u$, the  i.i.d. vectors $(Y_{i,n}(u),Z_{i,n},R_{i,n}(u)) $ are centered under $G_n(0)$. Indeed, from \eqref{Yinu}, we have 
 \[\E_{G_n(0)} Y_{i,n}(u)=\int [f(x,G_n(u))-f(x,G_n(0))]\dd\lambda(x)=0;\]
 furthermore by expanding $f$ around $\theta_{m_0}$, dividing by
 $f(\cdot,G_n(0))$, taking expectations and applying Fubini Theorem to the
 remainder, we get 
 \begin{align*}
 0=\sum_{k=1}^{2d-1}\frac{h^k}{k!}\E_{G_n(0)}&
\left[\frac{f^{(k)}(X_{i,n},\theta_{m_0})}{f(X_{i,n},G_n(0))}\right]\\
&+\int_{\theta_{m_0}}^{\theta_{m_0}+h}\frac{(\theta_{m_0}+h-\theta)^{2d-1}}{(2d-1)!}\E_{G_n(0)}
\left[\frac{f^{(2d)}(X_{i,n},\theta)}{f(X_{i,n},G_n(0))}\right]\dd \theta;
 \end{align*}
 Proposition~\ref{tversmix} ensures that each expectation exists, that
 Fubini Theorem is valid  and that the remainder term is of order
 $h^{2d}$. Thus, we deduce iteratively 
 that for $k\in\lb1,2d-1\rb$
\[\E_{G_n(0)} \left[\frac{f^{(k)}(X_{i,n},\theta_{m_0})}{f(X_{i,n},G_n(0))}\right]=0\]
 and in particular $\E_{G_n(0)} Z_{i,n}=0$. And dividing \eqref{FxGnu} by
 $f(x,G_n(0))$ gives as a result $\E_{G_n(0)} R_{i,n}(u)=0$ for all $u$. 

\subsection*{$L^p$-convergences of $A_n(u)$, $B_n(u)$ and $C_n(u)$}

We show the convergences \eqref{p1}, \eqref{p2} and \eqref{p3}:
\begin{eqnarray*}
A_n(u)=\sum_{i=1}^nY_{i,n}(u) -uZ_n & \xrightarrow[]{L^2} & 0, \\
B_n(u)=  \sum_{i=1}^nY_{i,n}(u)^2-u^2\Gamma& \xrightarrow[]{L^1}  &0, \\
 C_n(u)=\sum_{i=1}^n|Y_{i,n}(u)|^3&  \xrightarrow[]{L^1}& 0. 
\end{eqnarray*}
Recall the quantities: 
\begin{eqnarray}
  \label{Rnxu}
 R_n(x,u) &=&\sum_{j=m_0}^m\pi_{j,n}(u) \int_{\theta_{m_0}}^{\theta_{j,n}(u)}f^{(2d)}(x,\theta)\frac{(\theta_{j,n}(u)-\theta)^{2d-1}}{(2d-1)!}\dd\theta,\\
\label{Yinubis}
 Y_{i,n}(u)&=& \pi_{m_0}\left[u n^{-1/2}Z_{i,n}+R_{i,n}(u)-R_{i,n}(0)\right],\\
\label{Zn_Supp}
  Z_n&=&\pi_{m_0}n^{-1/2}\sum_{i=1}^nZ_{i,n}.
\end{eqnarray}
Recall also in the following computations that for each fixed $n$ and $u$, the  i.i.d. vectors $(Y_{i,n}(u),Z_{i,n},R_{i,n}(u)) $ are centered under $G_n(0)$. 

\underline{Proof of \eqref{p1}}. Note that from \eqref{Yinubis} and \eqref{Zn}
\[A_n(u)=
\pi_{m_0}\left(\sum_{i=1}^n R_{i,n}(u)-\sum_{i=1}^n R_{i,n}(0)\right),\]
and the equalities
\[\E_{G_n(0)}\left|\sum_{i=1}^n R_{i,n}(u)\right|^2=\sum_{i=1}^n\E_{G_n(0)} R_{i,n}(u)^2= n\E_{G_n(0)}|R_{1,n}(u)|^2\]
will give the desired $L^2$-convergence  if we can prove that for each $u$,
\begin{equation}
  \label{cvR1nu}
  \E_{G_n(0)}|R_{1,n}(u)|^2=o\left(\frac1n\right).
\end{equation}
To this end, we  look at the
expression \eqref{Rnxu} of $R_{n}(x,u)$ for fixed $u$. We have
$|\theta _{j,n}(u) - \theta|^{2d - 1} \le H(u)^{2d-1} n^{-1/2}$ for any
$\theta$ in the integrand, any $j$ and $n$. We may thus write 
\begin{align*}
\left|R_n(x,u)\right| & \le \sum_{j=m_0}^m \pi_j(u) \int_{\theta _{m_0} - H(u) n^{-\frac{1}{4d-2}}}^{\theta _{m_0} + H(u)  n^{-\frac{1}{4d-2}}} \left\lvert f^{(2d)}(x, \theta) \right\rvert \frac{H(u)^{2d-1}n^{-1/2}}{(2d-1)!} \mathrm{d}\theta \\
                      & \underset{u}{\lec} n^{-1/2}  \int_{\theta _{m_0} - H(u) n^{-\frac{1}{4d-2}}}^{\theta _{m_0} + H(u)  n^{-\frac{1}{4d-2}}}  \left\lvert f^{(2d)}(x, \theta) \right\rvert \mathrm{d}\theta.
\end{align*}
Since we have $\sigma $-finite
measures, we may use Fubini theorem. Since moreover $\theta$ in the
integrand is between $\theta_{m_0}$ and $\theta _{j,n}(u)$ which
converges to $\theta_{m_0}$, we may then apply
Proposition~\ref{tversmix}. For $q\in \lb 1,4\rb$, using convexity of $x \mapsto x^q$ on line two, we may then write:
\begin{align}
  \notag  \E_{G_n(0)}\left|R_{1,n}(u)\right|^q    & \underset{u}{\lec}   n^{-q/2}  \E_{G_n(0)}\left|\frac{  \int_{|\theta-\theta _{m_0} |\underset{u}{\lec} n^{-\frac{1}{4d-2}}}   \left\lvert f^{(2d)}(\cdot, \theta) \right\rvert \dd\theta }{f(\cdot, G_n(0))}\right|^q \\
 \notag     & \underset{u}{\lec}   n^{-\frac{q}{2}-\frac{q-1}{4d-2}}
        \int_{|\theta-\theta _{m_0}|\underset{u}{\lec}
        n^{-\frac{1}{4d-2}}} \underbrace{\E_{G_n(0)}\left|\frac{f^{(2d)}(\cdot,
        \theta)  }{f(\cdot, G_n(0))}\right|^q }_{\lec 1}\dd\theta \\
\label{cvEn0}      & \underset{u}{\lec} n^{-\frac{q}{2}-\frac{q}{4d-2}}  
\end{align}
Take $q=2$ to obtain \eqref{cvR1nu}  ; the proof of \eqref{p1} is complete.

\underline{Proof of \eqref{p2}}. Write 
\[B_n(u)=B^1_n(u)+B^2_n(u),\]
with
\begin{eqnarray*}
  B^1_n(u)&=&\sum_{i=1}^nY_{i,n}(u)^2-\frac{u^2\pi_{m_0}^2}{n}\sum_{i=1}^nZ_{i,n}^2,\\
B^2_n(u) &=& \frac{u^2\pi_{m_0}^2}{n}\sum_{i=1}^nZ_{i,n}^2-u^2\Gamma.
\end{eqnarray*}
Note first that from \eqref{Yinubis}  and
\eqref{Zn_Supp},
\[ B^1_n(u)=\pi_{m_0}^2\sum_{i=1}^n(R_{i,n}(u)-R_{i,n}(0))^2+\frac{2u \pi_{m_0}^2}{\sqrt{n}}\sum_{i=1}^n(R_{i,n}(u)-R_{i,n}(0))Z_{i,n},\]
so that taking the $L^1$-norm and by the Cauchy-Schwarz inequality,
\begin{align*}
  \E_{G_n(0)}\left|B^1_n(u)\right|\underset{u}{\lec}
                                                                      n\E_{G_n(0)}&\left[|R_{1,n}(u)|^2+|R_{1,n}(0)|^2\right]\\
&+\sqrt{n\E_{G_n(0)} \left[|R_{1,n}(u)|^2+ |R_{1,n}(0)|^2\right]}\sqrt{\E_{G_n(0)} Z_{1,n}^2}
\end{align*}
and the r.h.s. tends to $0$ by \eqref{cvR1nu} and the fact that
$\E_{G_n(0)} Z_{1,n}^2\to\sigma^2$. Besides, setting $\delta_n=|\pi_{m_0}^2\E_{G_n(0)} Z_{1,n}^2-\Gamma|$, we have 
\begin{eqnarray*}
  \E_{G_n(0)}\left|B^2_n(u)\right|^2&\underset{u}{\lec} &\E_{G_n(0)}
                                                                  \left|n^{-1}\sum_{i=1}^n(Z_{i,n}^2-\E_{G_n(0)}
                                                                  Z_{1,n}^2)\right|^2+\delta_n^2\\
&\underset{u}{\lec} &
                                                                  n^{-1}\mathrm{Var}_{G_n(0)}(Z_{1,n}^2)+\delta_n^2
\end{eqnarray*}
which goes to zero since $\delta_n\to 0$ by definition and $\E_{G_n(0)}
Z_{1,n}^4\lec 1$ by Proposition~\ref{tversmix}.
We have thus,   
\[\E_{G_n(0)}\left|B_n(u)\right|\lec \E_{G_n(0)}\left|B^1_n(u)\right|+\sqrt{\E_{G_n(0)}\left|B^2_n(u)\right|^2}\xrightarrow[]{} 0\]
which proves \eqref{p2}.

\underline{Proof of \eqref{p3}}. It is easily seen from \eqref{Yinubis} that
\begin{equation*}
 C_n(u)\underset{u}{\lec}
  n^{-3/2}\sum_{i=1}^n|Z_{i,n}|^3+\sum_{i=1}^n|R_{i,n}(u)|^3+\sum_{i=1}^n|R_{i,n}(0)|^3
\end{equation*}
so that taking expectations
\[
\E_{G_n(0)} |C_n(u)|\underset{u}{\lec}
  n^{-1/2}\E_{G_n(0)}|Z_{1,n}|^3+n\E_{G_n(0)}\left[|R_{1,n}(u)|^3+|R_{1,n}(0)|^3\right].
\]
But each of the three terms in the r.h.s. tends to $0$: the first one
because of $\E_{G_n(0)}|Z_{1,n}|^3\lec 1$ by Proposition~\ref{tversmix}, the
second and the third ones because of  \eqref{cvEn0} for $q=3$. Thus
$C_n(u)$ converges to $0$ in $L^1$.

\section{Proof of Theorem~\MakeLowercase{\ref{equal_compo}}}
Assume without loss of generality that $G_0 = \sum_{i=1}^m \pi_{i,0} \delta_{\theta_{i,0}}$, with $\pi_{i,0} \ge \pi_{\min,0} > 0$ and $\theta _{i+1,0} -\theta_{i,0}\ge  \kappa_0$ for all $i$, with $\kappa_0 > 0$. 
    
    Then, with $\eps>0$ small enough, any mixture $G$ in $\Glm \cap \mathcal{W}_{G_0}(\varepsilon)$ must have exactly one component close to each $\theta_{i,0}$, with a weight of order one. More precisely, for $\eps= \pi_{\min,0} \kappa_0 / 4$,
    \begin{equation}
      \label{eq:approx}
G \in  \Glm \cap \mathcal{W}_{G_0}(\varepsilon)\implies  \left\{
  \begin{array}{l}
 \displaystyle G = \sum_{i=1}^m \pi_i \delta_{\theta_i}, \\ 
 \text{with }\pi_i \ge \dfrac{\pi_{\min,0}}{2} \text{ and }
\left\lvert \theta_i - \theta _{i,0} \right\rvert \le \dfrac{\kappa_0}{2}.
  \end{array}\right.
    \end{equation}
Indeed, by the very definition of $W$ (not the dual form), there is a probability measure $\pi(\cdot, \cdot)$ on $\Theta \times \Theta$ with marginals $G_0=\pi(\cdot,\Theta)$ and $G=\pi(\Theta,\cdot)$ such that 
    \begin{equation}
      \label{eq:transp}
      W(G,G_0)=\sum_{i,j=1}^m|\theta_{i,0}-\theta_j|\pi\left(\{\theta_{i,0}\},\{\theta_j\}\right).
    \end{equation}
Set  $J_{i,0}=\{j: |\theta_{i,0}-\theta_j|<\kappa_0/2\}$ for each $i\in \lb 1,m\rb$. Then, from \eqref{eq:transp}, for each $i$,
\begin{eqnarray*}
   W(G,G_0) &\ge & \sum_{j\notin J_{i,0}}|\theta_{i,0}-\theta_j|\pi\left(\{\theta_{i,0}\},\{\theta_j\}\right)\\
&\ge &\frac{\kappa_0}{2}\sum_{j\notin J_{i,0}}\pi\left(\{\theta_{i,0}\},\{\theta_j\}\right)\\
&= &\frac{\kappa_0}{2}\left[\pi_{i,0}-\sum_{j\in J_{i,0}}\pi\left(\{\theta_{i,0}\},\{\theta_j\}\right)\right]\\
&\ge &\frac{\kappa_0}{2}\left[\pi_{\min,0}-\sum_{j\in J_{i,0}}\pi\left(\{\theta_{i,0}\},\{\theta_j\}\right)\right].
\end{eqnarray*}
Thus, if $W(G,G_0)<\pi_{\min,0} \kappa_0 / 4$, then we must have for each $i$,
\begin{equation}
  \label{eq:min}
\sum_{j\in J_{i,0}}\pi\left(\{\theta_{i,0}\},\{\theta_j\}\right)>\frac{\pi_{\min,0}}{2}
\end{equation}
and each $J_{i,0}$ is non empty. Furthermore, the (disjoint) $J_{i,0}$'s, $i\in \lb 1,m\rb$, are all singletons ; otherwise there would be at least one $J_{i,0}$ empty, since $G_0$ has exactly $m$ support points and $G$ at most $m$.  Considering a suitable numbering for the components of $G$, we can thus write $J_{i,0}=\{i\}$  so that $|\theta_{i,0}-\theta_i|<\kappa_0/2$ for each $i$ and \eqref{eq:min} yields
$\pi_i\ge \pi\left(\{\theta_{i,0}\},\{\theta_i\}\right)>\pi_{\min,0}/2$.

Now, set 
\[L=\inf_{\substack{G_1,G_2\in\Glm \cap \mathcal{W}_{G_0}(\eps)\\G_1\ne G_2}}   \frac{\left\lVert F(\cdot, G_1) - F(\cdot, G_2) \right\rVert_\infty}{W(G_1, G_2)}.\]
Select sequences of  mixing distributions $G_{1,n}\ne G_{2,n}$ in  $\Glm \cap \mathcal{W}_{G_0}(\varepsilon)$ such that:
    \begin{align*}
        \frac{\left\lVert F(\cdot, G_{1,n}) - F(\cdot, G_{2,n}) \right\rVert _\infty}{W(G_{1,n}, G_{2,n})} &\xrightarrow[n\to \infty]{} L.
    \end{align*}
We have to prove that $L>0$. Actually we shall prove that for $n$
large enough,
\begin{equation}
  \label{min1}
\frac{\left\lVert F(\cdot, G_{1,n}) - F(\cdot, G_{2,n}) \right\rVert _\infty}{W(G_{1,n}, G_{2,n})}\gec 1. 
\end{equation}
Up to taking subsequences, we can write $G_{a,n} =  \sum_{j=1}^m \pi_{j,a,n} \theta _{j,a,n}$
    with the convergences $\pi_{j,a,n} \to
    \pi_{j,a,\infty}$ and $\theta_{j,a,n} \to \theta
    _{j,a,\infty}$, for $a \in \{1,2\}$. Note that $G_{a,\infty} =  \sum_{j=1}^m
    \pi_{j,a,\infty} \delta_{\theta _{j,a,\infty}}$ lies in $\Glm \cap
    \mathcal{W}_{G_0}(\varepsilon)$ and thus satisfies
    \eqref{eq:approx}. In particular, the $\theta _{j,a,\infty}$'s are
    $\eps_a$-separated for some $\eps_a>0$ ; this will be used for
    $a=2$ below. 

Note first that for any $1$-Lipschitz function $f$ and any mixing distributions $G,G'\in \Glm$,
\[\left|\int_{\Theta}f\dd (G-G')\right|\le \sum_{j=1}^m \left| \theta_{j} - \theta'_{j} \right| +\Diam(\Theta) \left|  \pi_{j} - \pi_{j}'\right|\]
so that 
\[W(G_{1,n}, G_{2,n})\lec \sum_{j=1}^m \left| \theta_{j,1,n} - \theta_{j,2,n} \right| + \left|  \pi_{j,1,n} - \pi_{j,2,n}\right|.\]
To obtain \eqref{min1}, it remains to prove that for large $n$,
\begin{equation}
  \label{minum}
  \left\lVert F(\cdot, G_{1,n}) - F(\cdot, G_{2,n}) \right\rVert _\infty\gec \sum_{j=1}^m \left| \theta_{j,1,n} - \theta_{j,2,n} \right| + \left|  \pi_{j,1,n} - \pi_{j,2,n}\right|.
\end{equation}
By Taylor expansion of $F(x,\theta_{j,1,n})$ around $\theta_{j,2,n} $ and
Assumption B(1), 
\begin{equation}
  \label{DL}
F(x,G_{1,n}) - F(x,G_{2,n})  =\Sigma_n(x)+o\left(\sum_{j=1}^m
  |\theta _{j,1,n} - \theta _{j,2,n}|\right),\end{equation}
with 
\[\Sigma_n(x)=\sum_{j=1}^m  (\pi _{j,1,n} - \pi _{j,2,n}) F(x, \theta
_{j,2,n} )+\pi_{j,1,n} (\theta_{j,1,n} - \theta_{j,2,n}) F'(x,
\theta_{j,2,n} ).\]
In addition,  by convergence of $\theta_{j,a,n}$ to
$\theta_{j,a,\infty}$ for each $j$ and $a=1,2$, there is an integer $n_0$ such that for all $n\ge
n_0$,  each $(\theta_{j,2,n})_{1\le
  j\le m}$ is $\dfrac{\eps_2}{2}$-separated and $\pi_{j,1,n}\ge \pi_{j,1,\infty} /2$ for each $j$. So that by
Proposition~\ref{infimum}, for all $n\ge n_0$,
 \[\|\Sigma_n(\cdot)\|_\infty\gec \sum_{j=1}^m  |\pi _{j,1,n} - \pi _{j,2,n}|+\frac{\pi_{j,1,\infty}}{2} |\theta_{j,1,n} - \theta_{j,2,n}|. \]
Since \eqref{eq:approx} holds for  $G_{1,\infty}=\sum_{j=1}^m
\pi_{j,1,\infty}\delta_{\theta_{j,1,\infty}}$, we have $\pi_{j,1,\infty}\ge \pi_{\min,0}/2$ for each $j$. Thus, for all $n\ge n_0$,
\[\|\Sigma_n(\cdot)\|_\infty\gec \sum_{j=1}^m  |\pi _{j,1,n} - \pi
_{j,2,n}|+|\theta_{j,1,n} - \theta_{j,2,n}|.\]
Combining this last inequality with the sup-norm of \eqref{DL} gives
\eqref{minum}. This ends the proof.

\end{document}